\newcommand{\be}{\begin{eqnarray}}
\newcommand{\ee}{\end{eqnarray}}
\newcommand{\beq}{\begin{equation}}
\newcommand{\eeq}{\end{equation}}
\newcommand{\beqn}{\begin{equation*}}
\newcommand{\eeqn}{\end{equation*}}
\newcommand{\slot}{\,\cdot\,}
\DeclareMathOperator{\Var}{\mathrm{Var}}
\DeclareMathOperator{\Cov}{\mathrm{Cov}}
\newtheorem{thm}{Theorem}[section]
\newtheorem{prop}[thm]{Proposition}
\newtheorem{cor}[thm]{Corollary}
\newtheorem{lem}[thm]{Lemma}
\newtheorem{remark}[thm]{Remark}
\newcommand\cA{{\mathcal A}}
\newcommand\cB{{\mathcal B}}
\newcommand\cE{{\mathcal E}}
\newcommand\cF{{\mathcal F}}
\newcommand\cG{{\mathcal G}}
\newcommand\cH{{\mathcal H}}
\newcommand\cN{{\mathcal N}}
\newcommand\bE{{\mathbb E}}
\newcommand\bN{{\mathbb N}}
\newcommand\bP{{\mathbb P}}
\newcommand\bQ{{\mathbb Q}}
\newcommand\bR{{\mathbb R}}
\newcommand\bZ{{\mathbb Z}}
\newcommand\rd{{\mathrm d}}
\newcommand{\id}{{\mathrm{id}}}
\newcommand{\ve}{\varepsilon}
\def\bfP{\mathbf{P}}
\begin{document}

\title[Quenched normal approximation]{Quenched normal approximation for random sequences of transformations}

\author[Olli Hella]{Olli Hella}
\address[Olli Hella]{
Department of Mathematics and Statistics, P.O.\ Box 68, Fin-00014 University of Helsinki, Finland.}
\email{olli.hella@helsinki.fi}

\author[Mikko Stenlund]{Mikko Stenlund}
\address[Mikko Stenlund]{
Department of Mathematics and Statistics, P.O.\ Box 68, Fin-00014 University of Helsinki, Finland.}
\email{mikko.stenlund@helsinki.fi}
\urladdr{http://www.helsinki.fi/~stenlund/}

\keywords{Quenched normal approximation, dynamical systems}

\thanks{2010 {\it Mathematics Subject Classification.} 60F05; 37A05, 37A50} 
 





\begin{abstract}
We study random compositions of transformations having certain uniform fiberwise properties and prove bounds which in combination with other results yield a quenched central limit theorem equipped with a convergence rate, also in the multivariate case, assuming fiberwise centering. For the most part we work with non-stationary randomness and non-invariant, non-product measures. Independently, we believe our work sheds light on the mechanisms that make quenched central limit theorems work, by dissecting the problem into three separate parts.
\end{abstract}

\maketitle


\section{The problem}\label{sec:the problem}
In the following we will study random compositions $T_{\omega_n}\circ\dots\circ T_{\omega_1}$ of maps where~$\omega = (\omega_n)_{n\ge 1}$ is a sequence drawn randomly from a probability space~$(\Omega,\cF,\bP) = (\Omega_0^{\bZ_+},\cE^{\bZ_+},\bP)$. Here $(\Omega_0,\cE)$ is a measurable space and $\bZ_+ = \{1,2,\dots\}$.
For each $\omega_0\in\Omega_0$, $T_{\omega_0}:X\to X$ is a measurable self-map on the same measurable space $(X,\cB)$. 
Consider the shift transformation
\beqn
\tau:\Omega\to\Omega: \omega = (\omega_1,\omega_2,\dots)\mapsto \tau\omega = (\omega_2,\omega_3,\dots).
\eeqn
We assume that $\tau$ is $\cF$-measurable, but does not necessarily preserve the probability measure $\bP$. Next, define the map 
\beq\label{eq:varphi}
\varphi:\bN\times\Omega\times X\to \bN\times\Omega\times X: \varphi(n,\omega,x) = T_{\omega_n}\circ\dots\circ T_{\omega_1}(x)
\eeq
with the convention $\varphi(0,\omega,x) = x$.
We assume that the map $\varphi(n,\slot,\slot)$ is measurable from $\cF\otimes\cB$ to $\cB$ for every $n\in\bN = \{0,1,\dots\}$. The maps $\varphi(n,\omega) = \varphi(n,\omega,\slot) : X\to X$ form a cocycle over the shift $\tau$, which means that the identities $\varphi(0,\omega) = \id_X$ and $\varphi(n+m,\omega) = \varphi(n,\tau^m\omega)\circ \varphi(m,\omega)$ hold. 

Consider an observable $f:X\to\bR$. Introducing notations, we write 
\beqn
f_i = f\circ T_{\omega_i}\circ\dots\circ T_{\omega_1} = f\circ\varphi(i,\omega)
\eeqn
as well as
\beqn
S_n = \sum_{i = 0}^{n-1} f_i
\quad\text{and}\quad
W_n = \frac{S_n}{\sqrt n}.
\eeqn
Given an initial probability measure $\mu$, we write $\bar f_i$ and $\bar W_n$ for the corresponding fiberwise-centered random variables:
\beqn
\bar f_i = f_i - \mu(f_i) \quad\text{and}\quad \bar W_n = W_n - \mu(W_n).
\eeqn
Note that all of these depend on $\omega$.
Next, we define
\beqn
\sigma_n^2 = \Var_\mu \bar W_n = \frac{1}{n} \sum_{i=0}^{n-1}\sum_{j=0}^{n-1} \mu(\bar f_i\bar f_j).
\eeqn
Note that $\sigma_n^2$ depends on~$\omega$.

It is said that a \emph{quenched} CLT equipped with a rate of convergence holds if there exists~$\sigma>0$ such that $d(\bar W_n, \sigma Z)$ tends to zero with the same rate \emph{for almost every~$\omega$}. Here $Z\sim\cN(0,1)$ and the limit variance $\sigma^2$ is \emph{independent} of~$\omega$. Moreover, $d$ is a distance of probability distributions which we assume to satisfy
\beqn
d(\bar W_n,\sigma Z) \le d(\bar W_n,\sigma_n Z) + d(\sigma_n Z,\sigma Z)
\eeqn
and
\beqn
d(\sigma_n Z,\sigma Z) \le C|\sigma_n - \sigma|,
\eeqn
at least when $\sigma>0$ and $\sigma_n$ is close to $\sigma$; and that $d(\bar W_n,\sigma Z)\to 0$ implies weak convergence of $\bar W_n$ to $\cN(0,\sigma^2)$.
One can find results in the recent literature that allow to bound $d(\bar W_n,\sigma_n Z)$; see~Nicol--T\"or\"ok--Vaienti~\cite{NicolTorokVaienti_2018} and Hella~\cite{Hella_2018}. In this paper we supplement those by providing conditions which allow to identify a non-random~$\sigma$ and to obtain a bound on~$|\sigma_n(\omega) - \sigma|$ which tends to zero at a certain rate for almost every~$\omega$, which is a key feature of quenched CLTs.

Our strategy is to find conditions such that $\sigma_n^2(\omega)$ converges almost surely to 
\beqn
\sigma^2 = \lim_{n\to\infty}\bE\sigma_n^2.
\eeqn
This is motivated by two observations: (1) if $\lim_{n\to\infty}\sigma_n^2 = \sigma^2$ almost surely, dominated convergence should yield the equation above, and (2) $\bE\sigma_n^2$ is the variance of $\bar W_n$ with respect to the product measure $\bP\otimes\mu$, since $\mu(\bar W_n) = 0$:
\beqn
\bE\sigma_n^2 = \bE\Var_\mu \bar W_n  = \bE\mu(\bar W_n^2) = \Var_{\bP\otimes\mu} \bar W_n.
\eeqn

\begin{remark}\label{rem:variances}One has to be careful and note that $\bar W_n$ has been centered fiberwise, with respect to~$\mu$ instead of the product measure. Therefore, $\Var_{\bP\otimes\mu} \bar W_n$ and $\Var_{\bP\otimes\mu} W_n$ differ by $\Var_\bP \mu(W_n)$:
\beqn
\bE\sigma_n^2 = \Var_{\bP\otimes\mu} \bar W_n = \bE\mu(\bar W_n^2) = \bE\Var_\mu \bar W_n = \bE\Var_\mu W_n
= \Var_{\bP\otimes\mu} W_n - \Var_\bP \mu(W_n).
\eeqn
In special cases it may happen that $\Var_\bP \mu(W_n)\to 0$, or even~$\Var_\bP \mu(W_n) = 0$ if all the maps~$T_{\omega_i}$ preserve the measure~$\mu$, whereby the distinction vanishes and the use of a non-random centering becomes feasible. We will briefly return to this point in Remark~\ref{rem:Kubo2} motivated by a result in~\cite{AbdelkaderAimino_2016}. A related observation is made in Remark~\ref{rem:Kubo1} which answers a question raised in~\cite{AiminoNicolVaienti_2015} concerning the trick of ``doubling the dimension''.
\end{remark}

To implement the strategy, we handle the terms on the right side of
\beqn
|\sigma_n^2(\omega) - \sigma^2| \le |\sigma_n^2(\omega) - \bE\sigma_n^2| + |\bE\sigma_n^2 - \sigma^2|
\eeqn
separately, obtaining convergence rates for both. Note that these are of fundamentally different type: the first one concerns almost sure deviations of $\sigma^2_n$ about the mean, while the second one concerns convergence of said mean together with identification of the limit.

\begin{remark}
That the required bounds can be obtained illuminates the following pathway to a quenched central limit theorem:
\vspace{-1mm}
\begin{enumerate}
\item $d(\bar W_n,\sigma_n Z) \to 0$ almost surely,
\item $\sigma^2_n - \bE\sigma_n^2 \to 0$ almost surely,
\item $\bE\sigma_n^2 \to \sigma^2$ for some $\sigma^2>0$,
\end{enumerate}
where the last step involves identification of~$\sigma^2$.
\end{remark}

\begin{remark} Let us emphasize that in general we do not assume $\bP$ to be stationary or of product form; $\mu$ to be invariant for any of the maps $T_{\omega_i}$; or $\bP\otimes\mu$ (or any other measure of similar product form) to be invariant for the random dynamical system associated to the cocycle~$\varphi$.
\end{remark}

\medskip

Quenched limit theorems for random dynamical systems are abundant in the literature, going back at least to Kifer~\cite {Kifer_1998}. Nevertheless they remain a lively topic of research to date: Recent central limit theorems and invariance principles in such a setting include Ayyer--Liverani--Stenlund~\cite{AyyerLiveraniStenlund_2009}, Nandori--Szasz--Varju~\cite{NandoriSzaszVarju_2012}, Aimino--Nicol--Vaienti~\cite{AiminoNicolVaienti_2015}, Abdelkader--Aimino~\cite{AbdelkaderAimino_2016}, Nicol--T\"or\"ok--Vaienti~\cite{NicolTorokVaienti_2018}, Dragi{\v{c}}evi{\'c} et al.~\cite{DragicevicFroylandGonzalez-TokmanVaienti_2018, Dragicevic_etal_2018}, and Chen--Yang--Zhang~\cite{Chen_2018}. Moreover, Bahsoun et al.~\cite{BahsounBoseDuan_2014, BahsounBose_2016, BahsounBoseRuziboev_2017} establish important optimal quenched correlation bounds with applications to limit results, and Freitas--Freitas--Vaienti~\cite{FreitasFreitasVaienti_2017} establish interesting extreme value laws which have attracted plenty of attention during the past years.

\medskip
\noindent{\bf Structure of the paper.} The main result of our paper is Theorem~\ref{thm:main} in Section~\ref{sec:main}. It is an immediate corollary of Theorem~\ref{thm: sigma_n^2} of Section~\ref{sec:conv of sigma_n^2}, which concerns $|\sigma_n^2(\omega) - \bE\sigma_n^2|$, and of Theorem~\ref{thm: sigma_n^2 to sigma^2} of Section~\ref{sec:conv of sigma_n^2 to sigma^2}, which concerns $|\bE\sigma_n^2 - \sigma^2|$. In Section~\ref{sec:main} we also explain how the results of this paper extend to the vector-valued case $f:X\to\bR^d$. 

At the end of the paper the reader will find several appendices, which are integral parts of the paper: In Appendix~\ref{sec:RDS} we interpret the limit variance~$\sigma^2$ in the language of random dynamical systems and skew products. In Appendix~\ref{sec:pos_var} we present conditions for $\sigma^2>0$. In Appendix~\ref{sec:centerings}, we discuss how the fiberwise centering in the definition of $\bar W_n$ affects the limit variance.  For completeness, in Appendix~\ref{sec:SA5'} we elaborate on the structure of an invariant measure intimately related to the problem.



\section{The term $|\sigma_n^2(\omega) - \bE\sigma_n^2|$}\label{sec:conv of sigma_n^2}
In this section identify conditions which guarantee that, almost surely, $|\sigma_n^2(\omega) - \bE\sigma_n^2|$ tends to zero at a specific rate.

\medskip
\noindent{\bf Standing Assumption (SA1).} Throughout this paper we will assume that $f$ is a bounded measurable function and $\mu$ is a probability measure. We also assume that a uniform decay of correlations holds in that 
\beqn
|\mu(\bar f_i\bar f_j)| \le \eta(|i-j|)
\eeqn
almost surely, where $\eta:\bN\to[0,\infty)$ is such that 
\beq\label{eq:weak_eta}
\sum_{i=0}^\infty \eta(i) < \infty
\quad\text{and}\quad
\text{$\eta$ is non-increasing}.
\eeq
\hfill$\blacksquare$
\medskip

Note already that
\beqn
\lim_{n\to\infty}\frac1n\sum_{i=1}^n i\eta(i) = 0
\eeqn
because $\lim_{n\to\infty}i\eta(i) = 0$. For the most part, we shall require additional conditions on~$\eta$.

For future convenience, let us introduce the random variables
\beqn
v_i = v_i(\omega) = \sum_{j=i}^\infty (2-\delta_{ij})\mu(\bar f_i\bar f_j)
\eeqn
and their centered counterparts
\beqn
\tilde v_i = v_i - \bE v_i.
\eeqn
Note that these are uniformly bounded. We also denote
\beqn
\tilde\sigma_n^2 = \sigma_n^2 - \bE\sigma_n^2.
\eeqn
Thus, our objective is to show $\tilde\sigma_n^2 \to 0$ at some rate.

The following lemma is readily obtained by a well-known computation:
\begin{lem}\label{lem:var_mean}
Assuming~\eqref{eq:weak_eta}, there exists a constant $C>0$ such that
\beqn
\left|\sigma_n^2 - \frac{1}{n} \sum_{i=0}^{n-1} v_i\right| \le C\!\left(\frac1n\sum_{i=1}^n i\eta(i) + \sum_{i=n+1}^\infty\eta(i)\right) = o(1)
\eeqn
for all $\omega$.
\end{lem}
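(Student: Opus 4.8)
The plan is to reduce everything to an elementary estimate on sums of $\eta$, after symmetrizing the double sum defining $\sigma_n^2$. First I would use $\mu(\bar f_i\bar f_j) = \mu(\bar f_j\bar f_i)$ to group the contributions according to the smaller index,
\beqn
n\sigma_n^2 = \sum_{i=0}^{n-1}\sum_{j=0}^{n-1}\mu(\bar f_i\bar f_j) = \sum_{i=0}^{n-1}\sum_{j=i}^{n-1}(2-\delta_{ij})\mu(\bar f_i\bar f_j).
\eeqn
The inner sum is precisely the truncation of the series defining $v_i$, so subtracting $\sum_{i=0}^{n-1}v_i$ leaves only the discarded tail; since $j\ge n>i$ forces $\delta_{ij}=0$ there, I obtain the exact identity
\beqn
n\sigma_n^2 - \sum_{i=0}^{n-1}v_i = -2\sum_{i=0}^{n-1}\sum_{j=n}^{\infty}\mu(\bar f_i\bar f_j).
\eeqn

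Next I would insert the decay-of-correlations bound from (SA1), $|\mu(\bar f_i\bar f_j)|\le\eta(|i-j|)$, to pass to the deterministic estimate $2\sum_{i=0}^{n-1}\sum_{j=n}^{\infty}\eta(j-i)$. The substitution $k=j-i$ in the inner sum followed by $m=n-i$ in the outer one recasts this as $2\sum_{m=1}^{n}\sum_{k=m}^{\infty}\eta(k)$, which no longer sees the dynamics at all.

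The remaining step is the ``well-known computation'': splitting the inner sum at $k=n$ separates off $n\sum_{k=n+1}^{\infty}\eta(k)$, while interchanging the order of summation over the triangle $\{1\le m\le k\le n\}$ counts each $\eta(k)$ exactly $k$ times, yielding $\sum_{k=1}^{n}k\,\eta(k)$. Hence
\beqn
\sum_{m=1}^{n}\sum_{k=m}^{\infty}\eta(k) = \sum_{k=1}^{n}k\,\eta(k) + n\sum_{k=n+1}^{\infty}\eta(k),
\eeqn
and dividing by $n$ produces the claimed bound with $C=2$. I anticipate no genuine obstacle here; the only care needed is the index bookkeeping in the two changes of variable, and the rearrangements are legitimate because every term is non-negative and the tail series converges absolutely. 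The uniformity in $\omega$ is automatic, since the controlling constant and sequence depend on $\eta$ alone, so the estimate holds throughout the full-measure event on which (SA1) is valid.

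For the final $o(1)$ assertion I would simply invoke the two facts already recorded above the statement: the tail $\sum_{k=n+1}^{\infty}\eta(k)\to 0$ as the tail of the convergent series in~\eqref{eq:weak_eta}, and the Ces\`aro average $\frac1n\sum_{i=1}^{n}i\,\eta(i)\to 0$ because $i\eta(i)\to 0$ for a non-increasing summable $\eta$. This completes the argument.
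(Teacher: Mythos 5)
Your proof is correct and follows essentially the same route as the paper's: symmetrize the double sum, recognize the truncated series as $\frac1n\sum_i v_i$, bound the discarded tail by $\eta(j-i)$ via (SA1), and evaluate the resulting deterministic double sum to get exactly $\frac1n\sum_{k=1}^n k\eta(k) + \sum_{k=n+1}^\infty\eta(k)$. The only (cosmetic) difference is that you record an exact identity for $n\sigma_n^2 - \sum_{i=0}^{n-1}v_i$ before estimating, and organize the index bookkeeping via the substitutions $k=j-i$, $m=n-i$, whereas the paper splits the inner sum at $j=n+i$; both yield the same bound with the same constant.
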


\begin{proof}
First, we compute
\beqn
\begin{split}
\sigma_n^2 & = \frac{1}{n} \sum_{i=0}^{n-1}\sum_{j=0}^{n-1} \mu(\bar f_i\bar f_j) = \frac{1}{n} \! \left[\sum_{i=0}^{n-1} \mu(\bar f_i^2) + 2 \sum_{0\le i<j<n}\mu(\bar f_i\bar f_j)\right]
\\
& = \frac{1}{n} \sum_{i=0}^{n-1}\! \left[\mu(\bar f_i^2) + 2 \sum_{j = i+1}^{n-1} \mu(\bar f_i\bar f_j)\right]
\\
& = \frac{1}{n} \sum_{i=0}^{n-1}\! \left[\mu(\bar f_i^2) + 2 \sum_{j = i+1}^\infty \mu(\bar f_i\bar f_j)\right]  + O\!\left( \frac{1}{n} \sum_{i=0}^{n-1}\sum_{j = n}^\infty\eta(j-i) \right).
\end{split}
\eeqn
Here
\beqn
 \frac{1}{n} \sum_{i=0}^{n-1}\sum_{j = n}^\infty\eta(j-i) =  \frac{1}{n} \sum_{i=0}^{n-1}\sum_{j = n}^{n+i}\eta(j-i) + \frac{1}{n} \sum_{i=0}^{n-1}\sum_{j = n+i+1}^\infty\eta(j-i) = \frac1n\sum_{i=1}^n i\eta(i) + \sum_{i=n+1}^\infty\eta(i).
\eeqn
The last sums tend to zero by assumption.
\end{proof}

Suppose that $\eta(0)=A$ and $\eta(n) = An^{-\psi}$, $n\ge 1$, for some constants $A\ge 0,\psi>0$. We then use shorthand notation $\eta(n) = An^{-\psi}$, i.e., we interpret $0^{-\psi}=1$.
\begin{cor}\label{cor:var_mean_poly}
Suppose $\eta(n) = An^{-\psi}$, where $\psi>1$. Then
\beqn
\left|\sigma_n^2 - \frac{1}{n} \sum_{i=0}^{n-1} v_i\right| \le C 
\begin{cases}
n^{-1}, & \psi > 2,\\
n^{-1}\log n, & \psi = 2,\\
n^{1-\psi}, & 1<\psi<2.
\end{cases}
\eeqn
\end{cor}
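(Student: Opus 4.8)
The plan is simply to substitute the polynomial choice $\eta(n) = An^{-\psi}$ into the bound furnished by Lemma~\ref{lem:var_mean} and estimate the two resulting sums by their standard power-sum asymptotics. Concretely, I would write
\beqn
\frac1n\sum_{i=1}^n i\,\eta(i) = \frac{A}{n}\sum_{i=1}^n i^{1-\psi}
\quad\text{and}\quad
\sum_{i=n+1}^\infty\eta(i) = A\sum_{i=n+1}^\infty i^{-\psi},
\eeqn
and control each by comparison with the corresponding integral. The tail is the easy one: since $\psi>1$ it converges, and $\sum_{i=n+1}^\infty i^{-\psi}\le \int_n^\infty x^{-\psi}\,\rd x = (\psi-1)^{-1}\,n^{1-\psi}$, so this contribution is $O(n^{1-\psi})$ uniformly for all $\psi>1$.

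The first sum is where the three cases arise, according to the position of the exponent $1-\psi$ relative to $-1$. For $\psi>2$ the exponent satisfies $1-\psi<-1$, so $\sum_{i=1}^n i^{1-\psi}$ converges and equals $O(1)$, whence the first term is $O(n^{-1})$. For $\psi=2$ the exponent equals $-1$ and $\sum_{i=1}^n i^{-1}=O(\log n)$, giving $O(n^{-1}\log n)$. For $1<\psi<2$ the exponent lies in $(-1,0)$, and comparison with $\int_0^n x^{1-\psi}\,\rd x = (2-\psi)^{-1}\,n^{2-\psi}$ (the integral being finite at the origin precisely because $1-\psi>-1$) yields $\sum_{i=1}^n i^{1-\psi}=O(n^{2-\psi})$, so the first term is $O(n^{1-\psi})$.

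It then remains to combine the two estimates by taking the larger of the two rates in each regime. For $\psi>2$ we compare $n^{-1}$ with $n^{1-\psi}$, and since $1-\psi<-1$ the term $n^{-1}$ dominates; for $\psi=2$ the logarithmic factor makes $n^{-1}\log n$ dominate $n^{-1}$; and for $1<\psi<2$ both contributions are already of order $n^{1-\psi}$. This reproduces the three cases in the statement. There is essentially no genuine obstacle here—the argument is routine asymptotic bookkeeping—so the only thing I would be careful about is verifying in each regime that the term I discard is truly of lower (or equal) order than the one I keep, which is exactly the comparison carried out above.
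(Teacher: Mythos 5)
Your proposal is correct and is exactly the argument the paper has in mind: the paper explicitly skips this proof, noting it is an elementary computation based on Lemma~\ref{lem:var_mean}, which is precisely what you carry out. Your case analysis of $\frac1n\sum_{i=1}^n i^{1-\psi}$ and the tail bound $\sum_{i=n+1}^\infty i^{-\psi} = O(n^{1-\psi})$, together with the comparison of the two rates in each regime, is the intended routine bookkeeping and is error-free.
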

We skip the elementary proof based on Lemma~\ref{lem:var_mean}.

\begin{remark}\label{rem:alsotilde}
Of course, the upper bounds in the preceding results apply equally well to
\beqn
\tilde\sigma_n^2 - \frac1n\sum_{i=0}^{n-1} \tilde v_i.
\eeqn
\end{remark}

The following result, which has been used in dynamical systems papers including Melbourne--Nicol~\cite{MelbourneNicol_2009}, will be used to obtain an almost sure convergence rate of $\frac1n\sum_{i=0}^{n-1} \tilde v_i$ to zero:

\begin{thm}\label{thm:GK}[G\'al--Koksma~\cite{GalKoksma_1950}; see also Philipp--Stout~\cite{PhilippStout_1975}] Let $(X_n)$ be a sequence of centered, square-integrable, random variables. Suppose there exist $C>0$ and $q>0$ such that
\beqn
\bE\!\left[\left(\sum_{k=m}^{m+n-1} X_k\right)^2\right] \le C[(n+m)^q - m^q]
\eeqn
for all $m\ge 0$ and $n\ge 1$. Let $\delta>0$ be arbitrary. Then, almost surely,
\beqn
\frac1n\sum_{k=1}^n X_k = O(n^{\frac q2-1} \log^{\frac32+\delta}n).
\eeqn
\end{thm}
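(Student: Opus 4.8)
The plan is to prove the equivalent statement that the partial sums $T_N := \sum_{k=1}^N X_k$ satisfy $T_N = O(N^{q/2}\log^{3/2+\delta}N)$ almost surely; dividing by $N$ then gives the claim. The whole argument rests on one structural feature of the bounding function $\Phi(m,n) := C[(m+n)^q - m^q]$: it is \emph{additive over consecutive blocks}, i.e.\ $\Phi(m,n_1) + \Phi(m+n_1,n_2) = \Phi(m,n_1+n_2)$, which is just the telescoping $[(m+n_1)^q - m^q] + [(m+n_1+n_2)^q - (m+n_1)^q] = (m+n_1+n_2)^q - m^q$. I would combine this with a Rademacher--Menshov maximal inequality, a dyadic subsequence, and two applications of Borel--Cantelli.

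First I would establish the maximal inequality. Fix a block of dyadic length $2^p$ starting after index $a$, and set $U_i = \sum_{k=a+1}^{a+i}X_k$ for $0\le i\le 2^p$. Each $i$ decomposes $(0,i]$ into at most $p+1$ dyadic subintervals, one per scale, so the inequality $\bigl(\sum_{r=0}^p a_r\bigr)^2 \le (p+1)\sum_{r=0}^p a_r^2$ gives $U_i^2 \le (p+1)\sum_{r=0}^p \Delta_r^2$, where $\Delta_r$ is the increment of $U$ over the unique scale-$2^r$ block used by $i$. Bounding $\max_i U_i^2$ by the sum over \emph{all} scale-$2^r$ blocks at each level and taking expectations, the hypothesis yields $\bE[\max_{0\le i\le 2^p}U_i^2] \le (p+1)\sum_{r=0}^p \sum_{\ell}\Phi(a+\ell 2^r,2^r)$. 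Here the telescoping additivity is decisive: at each fixed scale the blocks partition $(a,a+2^p]$, so the inner sum collapses to $\Phi(a,2^p)$, leaving $\bE[\max_i U_i^2] \le (p+1)^2\,\Phi(a,2^p) = (\log_2(2\cdot 2^p))^2\,\Phi(a,2^p)$.

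With the maximal inequality in hand I would take $N_j = 2^j$ and control $T_n$ on each block $(N_{j-1},N_j]$ via $|T_n|\le |T_{N_{j-1}}| + \max_{N_{j-1}<n\le N_j}|T_n - T_{N_{j-1}}|$. For the endpoints, $\bE[T_{N_j}^2]\le \Phi(1,N_j)\le C N_j^q$, so Chebyshev plus Borel--Cantelli with thresholds $\asymp N_j^{q/2}(\log N_j)^{1/2+\delta}$ (summable since $\sum_j j^{-1-2\delta}<\infty$) gives $T_{N_j}=O(N_j^{q/2}\log^{1/2+\delta}N_j)$ almost surely. For the within-block fluctuation, note $N_j - N_{j-1} = 2^{j-1}$ is again a power of two, so the maximal inequality and additivity give $\bE[\max_{N_{j-1}<n\le N_j}(T_n-T_{N_{j-1}})^2]\le (\log_2(2N_j))^2\,\Phi(N_{j-1},N_j-N_{j-1}) \asymp (\log N_j)^2 N_j^q$; the extra squared logarithm forces thresholds $\asymp N_j^{q/2}(\log N_j)^{3/2+\delta}$ to preserve Borel--Cantelli summability, and this term dominates the endpoint term. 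Since $n\asymp N_j$ on the block, combining the two estimates yields $T_n=O(n^{q/2}\log^{3/2+\delta}n)$ almost surely, which is the claim.

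I expect the main obstacle to be the maximal inequality, specifically arranging the dyadic decomposition and the telescoping so that only a \emph{squared} logarithm (rather than a larger power) is lost; everything downstream is routine Chebyshev/Borel--Cantelli bookkeeping. The one subtlety worth care is tracking how the two logarithmic budgets combine: the maximal inequality contributes a factor $(\log N_j)^2$ to the second moment, hence $(\log N_j)^1$ to the threshold, while summability of $\sum_j (\log N_j)^2 N_j^q/\lambda_j^2$ requires an additional factor $(\log N_j)^{1/2+\delta}$ in $\lambda_j$; it is precisely $1 + (1/2+\delta) = 3/2+\delta$ that produces the exponent in the statement.
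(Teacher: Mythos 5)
Your proposal is correct. One point of comparison to make first: the paper does not prove this statement at all --- Theorem~\ref{thm:GK} is quoted as a known result of G\'al--Koksma, with Philipp--Stout cited for a proof, so there is no internal argument to measure yours against. What you have written is, in substance, the classical proof of that cited result: the telescoping (superadditive) structure of the bounding function $\Phi(m,n)=C[(m+n)^q-m^q]$, a Rademacher--Menshov/M\'oricz-type maximal inequality obtained by decomposing each partial sum into at most $p+1$ aligned dyadic increments and losing only the factor $(p+1)^2$, and then Chebyshev plus Borel--Cantelli applied separately to the dyadic endpoints $T_{2^j}$ (threshold $N_j^{q/2}\log^{1/2+\delta}N_j$) and to the within-block maxima (threshold $N_j^{q/2}\log^{3/2+\delta}N_j$, the dominant term). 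Your accounting of where the exponent $\tfrac32+\delta$ comes from --- one power of $\log$ from the maximal inequality, plus $\tfrac12+\delta$ to make the Borel--Cantelli series converge --- is exactly right. The only blemish is a harmless off-by-one: your blocks $(a,a+n]$ correspond to the hypothesis with $m=a+1$, so the correct bound is $C[(a+n+1)^q-(a+1)^q]$ rather than $C[(a+n)^q-a^q]$; since this shifted function is equally additive over consecutive blocks and still $O(N^q)$ on $(0,N]$, every step goes through verbatim. Note also that your argument only uses the hypothesis through this additivity and the bound $\Phi(0,n)\le Cn^q$, which is consistent with Remark~\ref{rmrk:q} of the paper, where the authors observe that in their range of application a bound of the form $Cn^q$ suffices.
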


\begin{remark}\label{rmrk:q}
In this paper the theorem is applied in the range $1\le q<2$. In particular, $n^q + m^q \le (n+m)^q$ then holds, so it suffices to establish an upper bound of the form~$Cn^q$. 
\end{remark}

Our application of Theorem~\ref{thm:GK} will be based on the following standard lemma:
\begin{lem}\label{lem:GK_correlation}
Suppose $|\bE[X_iX_k]| \le r(|k-i|)$ where $r(k) = O(k^{-\beta})$. There exists a constant~$C>0$ such that
\beqn
\bE\!\left[\left(\sum_{k=m}^{m+n-1} X_k\right)^2\right] \le
C\begin{cases}
n, & \beta>1,
\\
n\log n, & \beta = 1,
\\
n^{2-\beta}, & 0<\beta<1.
\end{cases}
\eeqn
\end{lem}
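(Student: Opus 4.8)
The plan is to expand the square into a double sum of second moments, apply the hypothesis termwise, and then reorganize the resulting sum according to the gap $\ell = |k-i|$. First I would write
\[
\bE\!\left[\left(\sum_{k=m}^{m+n-1} X_k\right)^2\right] = \sum_{i=m}^{m+n-1}\sum_{k=m}^{m+n-1}\bE[X_iX_k] \le \sum_{i=m}^{m+n-1}\sum_{k=m}^{m+n-1} r(|k-i|).
\]
The crucial observation is that the right-hand side no longer depends on $m$: translating the index block leaves the multiset of gaps $\{|k-i|\}$ unchanged. This is precisely what guarantees that the bound is uniform in $m$, which is the property needed to feed the estimate into the Gál--Koksma theorem (Theorem~\ref{thm:GK}), reading off the exponent $q$ from the regime of $\beta$.

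Next I would group the double sum by the value of the gap. For each $\ell\in\{0,1,\dots,n-1\}$, the number of ordered pairs $(i,k)$ within the block satisfying $|k-i|=\ell$ equals $n$ when $\ell=0$ and $2(n-\ell)\le 2n$ when $\ell\ge 1$. Therefore
\[
\sum_{i,k} r(|k-i|) \le n\,r(0) + 2n \sum_{\ell=1}^{n-1} r(\ell).
\]
Since $r(\ell)=O(\ell^{-\beta})$, the diagonal contribution $n\,r(0)$ is $O(n)$, and one checks that $O(n)$ is dominated by the claimed bound in each of the three cases (as $n\le n\log n$ and $n\le n^{2-\beta}$ for $0<\beta<1$). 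Hence the problem reduces entirely to estimating the partial sum $\sum_{\ell=1}^{n-1} r(\ell)$.

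Finally I would split into the three regimes. For $\beta>1$ the series $\sum_\ell \ell^{-\beta}$ converges, so $\sum_{\ell=1}^{n-1} r(\ell)=O(1)$ and the total is $O(n)$. For $\beta=1$ the harmonic partial sum gives $\sum_{\ell=1}^{n-1} r(\ell)=O(\log n)$, hence $O(n\log n)$. For $0<\beta<1$, comparison with $\int_1^{n} t^{-\beta}\,\rd t$ gives $\sum_{\ell=1}^{n-1} r(\ell)=O(n^{1-\beta})$, hence $O(n^{2-\beta})$. All of these are elementary estimates, so there is no genuine obstacle here; the only points requiring mild care are ensuring that no $m$-dependence creeps in (automatic once everything is expressed through the gaps $|k-i|$) and confirming that the diagonal term $n\,r(0)$ is harmless in all three regimes.
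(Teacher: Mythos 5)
Your proof is correct and follows essentially the same route as the paper's: expand the square, bound termwise by $r(|k-i|)$, count pairs by the gap to get $n\,r(0) + 2n\sum_{\ell=1}^{n-1} r(\ell)$, and then estimate the partial sum in each of the three regimes. Your explicit remarks on the uniformity in $m$ and the harmlessness of the diagonal term are sound but add nothing beyond what the paper's argument already contains implicitly.
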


\begin{proof}
Note that
\beqn
\begin{split}
\bE\!\left[\left(\sum_{k=m}^{m+n-1} X_k\right)^2\right]
& \le\sum_{k=m}^{m+n-1}\sum_{l=m}^{m+n-1}r(|k-l|)= n r(0) + \sum_{k=1}^{n-1} 2(n-k)r(k)
\\
& \le  n r(0) + 2n\sum_{k=1}^{n-1} r(k) \le Cn\sum_{k=1}^{n-1} k^{-\beta}.
\end{split}
\eeqn
Bounding the last sum in each case yields the result.
\end{proof}

\subsection{Dependent random selection process}\label{sec:selection_process}
It is most interesting to study the case where the sequence~$\omega = (\omega_i)_{i\ge 1}$ is generated by a non-trivial stochastic process such that the measure $\bP$ is not the product of its one-dimensional marginals. Essentially without loss of generality, we pass directly to the so-called canonical version of the process, which corresponds to the point of view that the sequence~$\omega$ \emph{is} the seed of the random process. In the following we briefly review some standard details.

Let $\pi_i:\Omega\to \Omega_0$ be the projection $\pi_i(\omega) = \omega_i$. The product sigma-algebra $\cF$ is the smallest sigma-algebra with respect to which all the latter projections are measurable. For any $I = (i_1,\dots,i_p)\subset {\bZ_+}$, $p\in{\bZ_+}\cup\{\infty\}$, we may define the sub-sigma-algebra $\cF_I = \sigma(\pi_i:i\in I)$ of~$\cF$. (In particular, $\cF = \cF_{\bZ_+}$.) We also recall that a function $u:\Omega\to\bR$ is $\cF_I$-measurable if and only if there exists an $\cE^p$-measurable function $\tilde u:\Omega_0^p\to\bR$ such that $u = \tilde u\circ (\pi_{i_1},\dots,\pi_{i_p})$, i.e., $u(\omega) = \tilde u(\omega_{i_1},\dots,\omega_{i_p})$. With slight abuse of language, we will say below that the sigma-algebra $\cF_I$ is generated by the random variables $\omega_i$, $i\in I$, instead of the projections $\pi_i$.
In particular, we denote 
\beqn
\cF_i^j = \sigma(\omega_n:i\le n\le j)\subset\cF
\eeqn
for $1\le i\le j\le \infty$. 

Denote
\beqn
\alpha(\cF_1^i,\cF_j^\infty) = \sup_{A\in \cF_1^i,\, B\in \cF_j^\infty}|\bP(AB) - \bP(A)\,\bP(B)|.
\eeqn
In the following $(\alpha(n))_{n\ge 1}$ will denote a sequence such that
\beqn
\sup_{i\ge 1}\alpha(\cF_1^i,\cF_{i+n}^\infty) \le \alpha(n)
\eeqn
for each $n\ge 1$. 

\medskip
\noindent{\bf Standing Assumption (SA2).}
Throughout the rest of the paper we assume that the random selection process is {\bf strong mixing}: $\alpha(n)$ can be chosen so that\footnote{It would be standard to denote $\sup_{i\ge 1}\alpha(\cF_1^i,\cF_{i+n}^\infty)$ by $\alpha(n)$. We prefer to let $\alpha(n)$ stand for an upper bound so the non-increasing assumption makes sense. This is a choice of technical convenience.}
\beqn
\lim_{n\to\infty}\alpha(n)=0 \quad\text{and}\quad\text{$\alpha$ is non-increasing.}
\eeqn
\hfill$\blacksquare$
\medskip

Suppose that $u = u(\omega_1,\dots,\omega_i)$ and $v = v(\omega_{i+n},\omega_{i+n+1},\dots)$ are~$L^\infty$ functions. Then
\beq\label{eq:strongmixing}
|\bE[uv] - \bE u\bE v| \le 4\|u\|_\infty\|v\|_\infty\alpha(n)
\eeq
as is well known.
Ultimately, we will impose a rate of decay on $\alpha(n)$.


We denote by $T_*$ the pushforward of a map $T$, acting on a probability measure $m$, i.e., $(T_*m)(A) = m(T^{-1}A)$ for measurable sets $A$. We write
\beqn
\mu_k = (T_{\omega_{k}}\circ \dots \circ T_{\omega_{1}})_{*}\mu
\eeqn
and
\beqn
\mu_{k,r+1}=(T_{\omega_{k}}\circ \dots \circ T_{\omega_{r+1}})_{*}\mu
\eeqn
for $k\ge r$. We also write
\beqn
f_{l,k+1} = f\circ T_{\omega_l}\circ\dots\circ T_{\omega_{k+1}} = f\circ \varphi(l-k,\tau^k\omega)
\eeqn
for $l\ge k$. Note that all of these objects depend on~$\omega$ through the maps~$T_{\omega_i}$. We use the conventions $\mu_0 = \mu$, $\mu_{r,r+1} = \mu$ and $f_{k,k+1} = f$ here.

\medskip
\noindent{\bf Standing Assumption (SA3).} Throughout the rest of the paper we assume the following uniform {\bf memory-loss condition}: there exists a constant $C\ge 0$ such that
\beq\label{eq:memloss}
|\mu_{k}(g)-\mu_{k,r+1}(g)| \le C\eta(k-r)
\eeq
for all
\beqn
g\in \cG_k = \cG_k(\omega) = \{f_{l,k+1}:l\ge k\} \cup\{ff_{l,k+1}:l\ge k \}
\eeqn
whenever $k\ge r$. The bound holds uniformly for (almost) all~$\omega$.
\hfill$\blacksquare$
\medskip

In the cocycle notation,~\eqref{eq:memloss} reads
\beq\label{eq:memloss_cocycle}
|\mu(g\circ\varphi(k,\omega)) - \mu(g\circ\varphi(k-r,\tau^r\omega))| \le C\eta(k-r).
\eeq

Note that, setting
\beqn
\tilde c_{ij} = (2-\delta_{ij})[\mu(\bar f_i\bar f_j) - \bE\mu(\bar f_i\bar f_j)],
\eeqn
we have
\beqn
\tilde v_i = \sum_{j=i}^\infty \tilde c_{ij}
\quad\text{and}\quad
 \bE[\tilde v_i\tilde v_k] = \bE\!\left[\left(\sum_{j=i}^\infty \tilde c_{ij}\right)\left(\sum_{l=k}^\infty \tilde c_{kl}\right)\right].
\eeqn

\begin{lem}\label{lem:tildec_decorr_mixing}
There exists a constant $C\ge 0$ such that
\beqn
|\bE[\tilde c_{ij}\tilde c_{kl}]| 
\le
\begin{cases}
C\eta(j-i)\eta(l-k), & \text{if $i\le j$ and $k\le l$}\\
C\eta(j-i)\min_{r:j\le r\le k}\{\eta(k-r) + \alpha(r-j)\eta(l-k)\}, & \text{if $i\le j \le k\le l$}.
\end{cases}
\eeqn
In particular, for $i\le j \le k\le l$,
\beqn
|\bE[\tilde c_{ij}\tilde c_{kl}]| \le C\eta(j-i) \min\!\left\{\eta(l-k),\min_{r:j\le r\le k}\{\eta(k-r) + \alpha(r-j)\eta(l-k)\}\right\}.
\eeqn
\end{lem}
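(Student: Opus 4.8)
The plan is to treat the two regimes separately: the first inequality is elementary, while the second carries all the work. As a uniform preliminary, note that SA1 gives $|\mu(\bar f_i\bar f_j)|\le\eta(j-i)$ pointwise for $i\le j$; taking expectations and using $2-\delta_{ij}\le 2$ yields $\|\tilde c_{ij}\|_\infty\le 4\eta(j-i)$. In the first case ($i\le j$ and $k\le l$) I would then simply estimate pointwise, $|\bE[\tilde c_{ij}\tilde c_{kl}]|\le\bE[|\tilde c_{ij}|\,|\tilde c_{kl}|]\le 16\,\eta(j-i)\eta(l-k)$, which is the claim.

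For the ordered case $i\le j\le k\le l$ I would fix $r$ with $j\le r\le k$ and build a truncation of $\tilde c_{kl}$ supported on the indices $\{r+1,\dots,l\}$. First I would rewrite the fiber correlation as a genuine covariance: using the cocycle identity $f_l=f_{l,k+1}\circ\varphi(k,\omega)$ and $\mu_k=\varphi(k,\omega)_{*}\mu$,
\beqn
\mu(\bar f_k\bar f_l)=\mu_k(f\,f_{l,k+1})-\mu_k(f)\,\mu_k(f_{l,k+1}).
\eeqn
Replacing each $\mu_k$ by $\mu_{k,r+1}$ defines $\mu(\bar f_k\bar f_l)^{(r)}$ and, after centering, $\tilde c_{kl}^{(r)}$. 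Two properties are needed. First, \emph{support}: since $\mu_{k,r+1}$ depends only on $\omega_{r+1},\dots,\omega_k$ and $f_{l,k+1}$ only on $\omega_{k+1},\dots,\omega_l$, the variable $\tilde c_{kl}^{(r)}$ is $\cF_{r+1}^l$-measurable. Second, \emph{closeness}: applying the memory-loss bound~\eqref{eq:memloss} to the three functions $f\,f_{l,k+1},\,f_{l,k+1},\,f\in\cG_k$, and using $\|f\|_\infty<\infty$ to control the product term, gives $|\mu(\bar f_k\bar f_l)-\mu(\bar f_k\bar f_l)^{(r)}|\le C\eta(k-r)$, hence $\|\tilde c_{kl}-\tilde c_{kl}^{(r)}\|_\infty\le C\eta(k-r)$.

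The decisive observation is that the truncated object is itself an honest correlation for the shifted driving sequence: the same cocycle computation gives $\mu(\bar f_k\bar f_l)^{(r)}=\mu(\bar f'_{k-r}\bar f'_{l-r})$ with $f'_m=f\circ\varphi(m,\tau^r\omega)$, so SA1 applied fiberwise to $\tau^r\omega$ yields $|\mu(\bar f_k\bar f_l)^{(r)}|\le\eta(l-k)$ and thus $\|\tilde c_{kl}^{(r)}\|_\infty\le 4\eta(l-k)$. I would then split
\beqn
\bE[\tilde c_{ij}\tilde c_{kl}]=\bE[\tilde c_{ij}\tilde c_{kl}^{(r)}]+\bE[\tilde c_{ij}(\tilde c_{kl}-\tilde c_{kl}^{(r)})].
\eeqn
The second term is at most $\|\tilde c_{kl}-\tilde c_{kl}^{(r)}\|_\infty\,\bE|\tilde c_{ij}|\le C\eta(j-i)\eta(k-r)$. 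In the first term $\tilde c_{ij}$ is $\cF_1^j$-measurable and $\tilde c_{kl}^{(r)}$ is $\cF_{r+1}^\infty$-measurable, both centered, with the two supports separated by a gap of order $r-j$; strong mixing~\eqref{eq:strongmixing} then gives $|\bE[\tilde c_{ij}\tilde c_{kl}^{(r)}]|\le 4\|\tilde c_{ij}\|_\infty\|\tilde c_{kl}^{(r)}\|_\infty\,\alpha(r-j)\le C\eta(j-i)\eta(l-k)\,\alpha(r-j)$, where the non-increasing property of $\alpha$ absorbs the off-by-one in the indexing. Adding the two contributions gives the bound $C\eta(j-i)\{\eta(k-r)+\alpha(r-j)\eta(l-k)\}$ for every admissible $r$, and minimizing over $r$ yields the second case; the final ``in particular'' inequality is just the minimum of the two cases, both of which are available when $i\le j\le k\le l$.

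The main obstacle is the construction of $\tilde c_{kl}^{(r)}$, which must simultaneously stay within $C\eta(k-r)$ of $\tilde c_{kl}$ (via SA3, correctly propagated through the product in the covariance), retain a clean support $\{r+1,\dots,l\}$, and carry a bound of the \emph{sharp} order $\eta(l-k)$. The identification $\mu(\bar f_k\bar f_l)^{(r)}=\mu(\bar f'_{k-r}\bar f'_{l-r})$ is precisely what produces the factor $\alpha(r-j)\eta(l-k)$; without it one only controls $\|\tilde c_{kl}^{(r)}\|_\infty$ by $\eta(l-k)+\eta(k-r)$ and must additionally argue that the spurious term $\alpha(r-j)\eta(k-r)$ is absorbed into the $\eta(k-r)$ contribution.
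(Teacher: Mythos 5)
Your proof is correct and follows essentially the same route as the paper's: the first case is the same pointwise estimate, and in the ordered case you use the identical mechanism --- replace $\mu(\bar f_k\bar f_l)$ by a memory-loss truncation that is $\cF_{r+1}^l$-measurable (the paper's $v = \mu_{k,r+1}(ff_{l,k+1}) - \mu_{k,r+1}(f)\mu_{k,r+1}(f_{l,k+1})$), pay $C\eta(j-i)\eta(k-r)$ for the replacement via (SA3), apply strong mixing across the gap of length $r-j$, and minimize over $r$. The only substantive difference is how $\|v\|_\infty$ is controlled. The paper bounds $\|v\|_\infty \le C(\eta(l-k)+\eta(k-r))$ by comparing $v$ back to $\mu(\bar f_k\bar f_l)$ and then absorbs the spurious term $\alpha(r-j)\eta(k-r)$ into the $\eta(k-r)$ contribution using the boundedness of $\alpha$ --- exactly the fallback you describe in your final paragraph. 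Your refinement, identifying $v = \mu(\bar f'_{k-r}\bar f'_{l-r})$ as an honest correlation of the shifted sequence $\tau^r\omega$ and invoking (SA1) there, yields the sharp bound $\|v\|_\infty \le C\eta(l-k)$ directly; note, however, that this step applies (SA1) at $\tau^r\omega$, and since (SA1) is only assumed to hold almost surely while $\tau$ is not assumed to preserve (or even be nonsingular with respect to) $\bP$, the exceptional set does not automatically pull back to a null set under $\tau^{-r}$. The paper's cruder bound sidesteps this issue entirely, at no cost to the final estimate. Since the resulting bounds coincide and you hold the absorption argument in reserve, this is a cosmetic variation rather than a gap.
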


\begin{proof}
The first bound holds, because
\beqn
|\tilde c_{ij}\tilde c_{kl}| \le 2\eta(|j-i|)\quad \text{ and } \quad|\tilde c_{ij}\tilde c_{kl}| \le 2\eta(|l-k|).
\eeqn
Suppose $ i\le j \le r \le k\le l$ holds. By (SA3), the choice $g=f f_{l,k+1}$ yields
\beqn
|\mu(f_{k}f_{l}) - \mu_{k,r+1}(ff_{l,k+1})| \le C\eta(k-r),
\eeqn
while the choices $g = f$ and $g = f_{l,k+1}$ together yield
\beqn
|\mu(f_{k})\mu(f_{l}) - \mu_{k,r+1}(f)\mu_{k,r+1}(f_{l,k+1})| \le C\eta(k-r).
\eeqn
Hence
\beq\label{eq:tildec_decorr_mixing_proof1}
| \mu(\bar{f_{k}}\bar{f_{l}}) - \{\mu_{k,r+1}(ff_{l,k+1}) - \mu_{k,r+1}(f)\mu_{k,r+1}(f_{l,k+1})\} | \le C\eta(k-r).
\eeq
Note that here the expression in the curly braces only depends on the random variables $\omega_{r+1},\dots,\omega_l$ while $\mu(\bar f_i \bar f_j)$ only depends on $\omega_1,\dots,\omega_j$. More precisely, denoting $u = \mu(\bar f_i \bar f_j)$ and $v = \mu_{k,r+1}(ff_{l,k+1}) - \mu_{k,r+1}(f)\mu_{k,r+1}(f_{l,k+1})$, we have $u\in L^\infty(\cF_1^j)$ and $v\in L^\infty(\cF_{r+1}^l)\subset L^\infty(\cF_r^\infty)$.
Therefore,
\beqn
|\bE[\mu(\bar{f_{i}}\bar{f_{j}})\mu(\bar{f_{k}}\bar{f_{l}})] - \bE[uv]| \le C\bE[|u|]\eta(k-r) \le C\eta(j-i)\eta(k-r)
\eeqn
by \eqref{eq:tildec_decorr_mixing_proof1}. On the other hand, the strong-mixing bound~\eqref{eq:strongmixing} implies
\beqn
|\bE[uv] - \bE u\,\bE v| \le \alpha(r-j)\|u\|_\infty\|v\|_\infty \le C\alpha(r-j)\eta(j-i)\|v\|_\infty.
\eeqn
Moreover,
\beqn
|\bE[\mu(\bar{f_{i}}\bar{f_{j}})]\bE[\mu(\bar{f_{k}}\bar{f_{l}})] - \bE u\,\bE v| \le |\bE u| |\bE[\mu(\bar{f_{k}}\bar{f_{l}}) - v]| \le C\eta(j-i)\eta(k-r).
\eeqn
Collecting the bounds leads to the estimate
\beqn
\begin{split}
|\bE[\tilde c_{ij}\tilde c_{kl}]| 
& \le 4 |\bE[\mu(\bar{f_{i}}\bar{f_{j}})\mu(\bar{f_{k}}\bar{f_{l}})]-\bE[\mu(\bar{f_{i}}\bar{f_{j}})]\bE[\mu(\bar{f_{k}}\bar{f_{l}})]|
\\
& \le C\eta(j-i)\{\eta(k-r) + \alpha(r-j)\|v\|_\infty\}.
\end{split}
\eeqn
Note that~\eqref{eq:tildec_decorr_mixing_proof1} immediately yields the estimate
\beqn
\|v\|_\infty \le C\eta(l-k) + C\eta(k-r)
\eeqn
which by the boundedness of $\alpha$ results in
\beqn
\begin{split}
|\bE[\tilde c_{ij}\tilde c_{kl}]| & \le C\eta(j-i)\{\eta(k-r) + \alpha(r-j)[\eta(l-k) + \eta(k-r)]\}
\\
& \le C\eta(j-i)\{\eta(k-r) + \alpha(r-j)\eta(l-k)\}.
\end{split}
\eeqn
Taking the minimum with respect to $r$ proves the lemma.
\end{proof}

The upper bound $|\bE[\tilde c_{ij}\tilde c_{kl}]| \le C\eta(j-i)\eta(l-k)$ of Lemma~\ref{lem:tildec_decorr_mixing} yields the following intermediate result:
\begin{lem}\label{lem:strong_mixing_bound}
For $i\le k$,
\beqn
|\bE[\tilde v_i\tilde v_k]|\le
C\!\left(\sum_{j=i}^{k-1}\sum_{l=k}^{2k-j-1} |\bE[\tilde c_{ij}\tilde c_{kl}]|
 + \sum_{n=m}^\infty \eta(n)
+ \sum_{n=0}^{m-1}\sum_{p=m-n}^{\infty} \eta(n)\eta(p)
\right),
\eeqn
where we have denoted $m=k-i$.
\end{lem}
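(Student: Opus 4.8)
The plan is to start from the exact expansion
$$|\bE[\tilde v_i\tilde v_k]| = \left|\sum_{j=i}^\infty\sum_{l=k}^\infty \bE[\tilde c_{ij}\tilde c_{kl}]\right| \le \sum_{j=i}^\infty\sum_{l=k}^\infty |\bE[\tilde c_{ij}\tilde c_{kl}]|$$
and to split the summation region $\{(j,l): j\ge i,\ l\ge k\}$ into a near-diagonal block, in which the terms are retained unestimated, and its complement, in which I apply the coarse bound $|\bE[\tilde c_{ij}\tilde c_{kl}]| \le C\eta(j-i)\eta(l-k)$ from Lemma~\ref{lem:tildec_decorr_mixing} (which is valid whenever $i\le j$ and $k\le l$, hence on the entire range of summation). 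The decisive choice is where to cut: I would keep exactly the block $R_1 = \{(j,l): i\le j\le k-1,\ k\le l\le 2k-j-1\}$, which is precisely the first double sum on the right-hand side of the claim.

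Next I would reparametrize the complement $R_2$ by setting $n = j-i\ge 0$ and $p = l-k\ge 0$. A short computation shows that the two constraints defining $R_1$, namely $j\le k-1$ and $l\le 2k-j-1$, translate respectively into $n\le m-1$ and $p\le m-n-1$ with $m = k-i$; together these read $n+p\le m-1$, so the complement is exactly $R_2 = \{(n,p): n,p\ge 0,\ n+p\ge m\}$. On $R_2$ the summand is dominated by $C\eta(n)\eta(p)$, whence the contribution of $R_2$ is at most $C\sum_{(n,p)\in R_2}\eta(n)\eta(p)$.

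Finally I would split $R_2$ according to whether $n\ge m$ or $0\le n\le m-1$. The first piece factorizes as $\sum_{n\ge m}\eta(n)\sum_{p\ge 0}\eta(p)$, which is bounded by $C\sum_{n=m}^\infty \eta(n)$ using the summability $\sum_{p=0}^\infty \eta(p) < \infty$ guaranteed by~\eqref{eq:weak_eta}; the second piece is precisely $\sum_{n=0}^{m-1}\sum_{p=m-n}^\infty \eta(n)\eta(p)$. Collecting the retained block together with these two tail contributions yields the stated bound.

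There is essentially no analytic difficulty here; the work is entirely bookkeeping. The one point demanding care — and thus the main obstacle — is verifying that, after reindexing, the complement of the retained block $R_1$ coincides exactly with $\{n+p\ge m\}$, so that the residual double sum factorizes cleanly into the two tail expressions in the statement. A slip in the boundary, say $l=2k-j$ in place of $l=2k-j-1$, would shift the index ranges by one and spoil the clean identification, so I would check that endpoint explicitly.
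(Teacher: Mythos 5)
Your proposal is correct and follows essentially the same route as the paper: both keep the near-diagonal block $\sum_{j=i}^{k-1}\sum_{l=k}^{2k-j-1}|\bE[\tilde c_{ij}\tilde c_{kl}]|$ untouched, bound everything else by $C\eta(j-i)\eta(l-k)$ via Lemma~\ref{lem:tildec_decorr_mixing}, and the two pieces you carve out of the complement (after the substitution $n=j-i$, $p=l-k$) coincide exactly with the paper's regions $\{i\le j\le k-1,\ l\ge 2k-j\}$ and $\{j\ge k,\ l\ge k\}$. The only difference is cosmetic — you reindex before splitting while the paper splits before reindexing — and your boundary bookkeeping, including the endpoint $l\le 2k-j-1$ translating to $n+p\le m-1$, checks out.
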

\begin{proof}
We can estimate
\beqn
\begin{split}
|\bE[\tilde v_i\tilde v_k]|
& \le \sum_{j=i}^\infty \sum_{l=k}^\infty |\bE[\tilde c_{ij}\tilde c_{kl}]|
\\
& = \sum_{j=i}^{k-1}\sum_{l=k}^{2k-j-1} |\bE[\tilde c_{ij}\tilde c_{kl}]|
+ \sum_{j=i}^{k-1}\sum_{l=2k-j}^\infty |\bE[\tilde c_{ij}\tilde c_{kl}]|
+ \sum_{j=k}^\infty\sum_{l=k}^\infty |\bE[\tilde c_{ij}\tilde c_{kl}]|
\\
& \le \sum_{j=i}^{k-1}\sum_{l=k}^{2k-j-1} |\bE[\tilde c_{ij}\tilde c_{kl}]| 
+ \sum_{j=i}^{k-1}\sum_{l=2k-j}^\infty C\eta(j-i)\eta(l-k)
+ \sum_{j=k}^\infty\sum_{l=k}^\infty C\eta(j-i)\eta(l-k)
\\
& \le \sum_{j=i}^{k-1}\sum_{l=k}^{2k-j-1} |\bE[\tilde c_{ij}\tilde c_{kl}]| 
+ C\sum_{n=0}^{k-i-1}\sum_{p=k-i-n}^{\infty} \eta(n)\eta(p) + C\sum_{n=k-i}^\infty \eta(n).
\end{split}
\eeqn
In the third line we used the upper bound $|\bE[\tilde c_{ij}\tilde c_{kl}]| \le C\eta(j-i)\eta(l-k)$ of Lemma~\ref{lem:tildec_decorr_mixing}.
\end{proof}

Next we investigate the remaining term
$
\sum_{j=i}^{k-1}\sum_{l=k}^{2k-j-1} |\bE[\tilde c_{ij}\tilde c_{kl}]|
$
appearing in Lemma~\ref{lem:strong_mixing_bound}.
Since $i\le j\le k\le l$, we have
\beqn
\min_{r:j\le r\le k}\{\eta(k-r) + \alpha(r-j)\eta(l-k)\} \le \eta(k-j) + \alpha(0)\eta(l-k)
\eeqn
by choosing $r = j$. Suppose furthermore that $k-j \ge l-k$ and recall $\eta$ is non-increasing. Then the right side of the above display is bounded above by $C\eta(l-k)$. In other words, if $i\le j\le k\le l \le 2k-j$, then
 $C\eta(j-i)\min_{r:j\le r\le k}\{\eta(k-r) + \alpha(r-j)\eta(l-k)\}$ is the tightest bound on $|\bE[\tilde c_{ij}\tilde c_{kl}]|$ that Lemma~\ref{lem:tildec_decorr_mixing} can provide. 
This observation motivates the following lemma.

\begin{lem}\label{lem:Sbounds}
Define
\beqn
S(i,k) = \sum_{j=i}^{k-1} \eta(j-i)\sum_{l=k}^{2k-j-1} \min_{r:j\le r\le k}\{\eta(k-r) + \alpha(r-j)\eta(l-k)\}.
\eeqn
{\bf (i)} There exists a constant $C\ge 0$ such that
\beqn
\sum_{j=i}^{k-1}\sum_{l=k}^{2k-j-1} |\bE[\tilde c_{ij}\tilde c_{kl}]|\le CS(i,k)
\eeqn
whenever $i\le k$. 

\noindent {\bf (ii)} There exist constants $C_1\ge 0$ and $C_2\ge 0$ such that
\beqn
C_{1}\{m\eta(m)+ \alpha(m)\}\le S(i,k) \le  C_{2}\! \left\lbrace m\eta\!\left(\left\lfloor\frac{m}{4}\right\rfloor\right)+  \alpha\!\left(\left\lfloor\frac{m}{4}\right\rfloor\right)\right\rbrace \qquad (m = k-i)
\eeqn
whenever $i<k$. (Note also that $S(i,i) = 0$.)
\end{lem}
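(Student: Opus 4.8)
Part (i) requires no work beyond Lemma~\ref{lem:tildec_decorr_mixing}: for $i\le j\le k\le l$ that lemma gives $|\bE[\tilde c_{ij}\tilde c_{kl}]|\le C\eta(j-i)\min_{r:j\le r\le k}\{\eta(k-r)+\alpha(r-j)\eta(l-k)\}$, and summing this inequality over $i\le j\le k-1$, $k\le l\le 2k-j-1$ is exactly $CS(i,k)$. For part (ii) I would first pass to the variables $a=j-i$, $b=l-k$, $s=r-j$, and write $m=k-i$, $M=m-a$. Then the inner minimum becomes $\Phi(M,b):=\min_{0\le s\le M}\{\eta(M-s)+\alpha(s)\eta(b)\}$ and
\[
S(i,k)=\sum_{a=0}^{m-1}\eta(a)\sum_{b=0}^{M-1}\Phi(M,b),\qquad M=m-a.
\]
I would also record $\bar\eta:=\sum_{b\ge 0}\eta(b)<\infty$ from (SA1) and the elementary inequality $M\eta(M)\le\sum_{j=0}^{M-1}\eta(j)\le\bar\eta$, which holds because $\eta$ is non-increasing.

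For the upper bound I would estimate the minimum by the single admissible index $s=\lfloor M/2\rfloor$, giving $\Phi(M,b)\le\eta(\lceil M/2\rceil)+\alpha(\lfloor M/2\rfloor)\eta(b)$ and hence $\sum_{b=0}^{M-1}\Phi(M,b)\le M\eta(\lceil M/2\rceil)+\bar\eta\,\alpha(\lfloor M/2\rfloor)$. I would then split the outer sum at $a=m/2$. When $a<m/2$ one has $m/2<M\le m$, so $\lceil M/2\rceil\ge\lfloor m/4\rfloor$ and $\lfloor M/2\rfloor\ge\lfloor m/4\rfloor$; monotonicity of $\eta$ and $\alpha$ together with $\sum_a\eta(a)\le\bar\eta$ bound this part by $\bar\eta\,m\eta(\lfloor m/4\rfloor)+\bar\eta^2\alpha(\lfloor m/4\rfloor)$. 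When $a\ge m/2$ I would instead bound the inner sum crudely by the constant $(1+\alpha(0))\bar\eta$ (take $s=0$, so $\Phi(M,b)\le\eta(M)+\alpha(0)\eta(b)$, and use $M\eta(M)\le\bar\eta$), and control the outer sum by $\sum_{a\ge m/2}\eta(a)\le(m/2)\eta(\lfloor m/2\rfloor)\le m\eta(\lfloor m/4\rfloor)$. Adding the two ranges produces the asserted $C_2\{m\eta(\lfloor m/4\rfloor)+\alpha(\lfloor m/4\rfloor)\}$.

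For the lower bound I would retain only the $a=0$ term, $S(i,k)\ge\eta(0)\sum_{b=0}^{m-1}\Phi(m,b)$, and bound $\Phi(m,b)$ from below in two complementary ways valid for every $s\in\{0,\dots,m\}$. Discarding $\alpha(s)\eta(b)\ge 0$ and using $\eta(m-s)\ge\eta(m)$ gives $\Phi(m,b)\ge\eta(m)$, hence $S\ge\eta(0)\,m\eta(m)$. Discarding $\eta(m-s)\ge 0$ and using $\alpha(s)\ge\alpha(m)$ (true for all $s\le m$ because $\alpha$ is non-increasing) gives $\Phi(m,b)\ge\alpha(m)\eta(b)$, hence, keeping the $b=0$ term, $S\ge\eta(0)^2\alpha(m)$. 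Averaging these two estimates yields $S\ge C_1\{m\eta(m)+\alpha(m)\}$ with, for instance, $C_1=\tfrac12\eta(0)\min\{1,\eta(0)\}$, which is nonnegative and strictly positive whenever $\eta(0)>0$.

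The one genuinely non-obvious step is the $\alpha(m)$ contribution to the lower bound: one cannot produce it by plugging a single index into the minimum, since any fixed $s$ also carries the term $\eta(M-s)$. The point is rather that the minimization ranges only over $s\le m$, so $\alpha(s)\ge\alpha(m)$ for every admissible $s$ and the $\alpha$-term is uniformly bounded below by $\alpha(m)\eta(b)$ irrespective of the minimizer. Everything else is bookkeeping; in the upper bound the only care needed is with the floors, and the constant $4$ is simply the product of the two halvings---the split of the outer sum at $m/2$ and the choice $s=\lfloor M/2\rfloor$ inside the minimum.
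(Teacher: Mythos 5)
Your proposal is correct and follows essentially the same route as the paper's proof: part (i) by direct summation of Lemma~\ref{lem:tildec_decorr_mixing}, the lower bound by restricting to $j=i$ and bounding the minimum below in the two complementary ways (dropping the $\eta$-term and using monotonicity of $\alpha$ over the full range $s\le m$, respectively dropping the $\alpha$-term), and the upper bound by evaluating the minimum at the midpoint $r=\lfloor(k+j)/2\rfloor$ and splitting the outer sum at $m/2$. Your reformulation in the variables $(a,b,s,M)$ and the slightly different treatment of the range $a\ge m/2$ (bounding the inner sum by a constant rather than pulling $\eta(\lfloor m/2\rfloor)$ out of the outer sum) are only cosmetic differences.
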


\begin{proof}
Part (i) is an immediate corollary of Lemma~\ref{lem:tildec_decorr_mixing}. As for part (ii), let us first prove the lower bound.
Since all the terms in $S(i,k)$ are nonnegative and $\alpha$ is non-increasing, we have for $i<k$ that
\beqn
S(i,k)\ge \sum_{j=i}^{i} \eta(j-i)\sum_{l=k}^{k} \min_{r:j\le r\le k}\alpha(r-j)\eta(l-k)\ge \eta(0)^{2}\alpha(k-i)
\eeqn
and
\beqn
S(i,k)\ge \sum_{j=i}^{i} \eta(j-i)\sum_{l=k}^{2k-j-1} \min_{r:j\le r\le k}\eta(k-r) \ge \eta(0) (k-i)\eta(k-i). 
\eeqn
Setting $C_1 = \frac12\eta(0)^{2} +\frac12 \eta(0)$ gives an overall bound
\beqn
S(i,k)\ge C_1\{\alpha(m) + m\eta(m)\}
\eeqn
for all $i<k$.

It remains to prove the upper bound in part (ii).
 We choose $r=\lfloor(k+j)/2\rfloor$. Since $\eta$ is summable, we have
 \begin{align*}
S(i,k) = \sum_{j=i}^{k-1} \eta(j-i)\sum_{l=k}^{2k-j-1} \left\lbrace\eta\!\left(k-\left\lfloor\frac{k+j}{2}\right\rfloor\right) + \alpha\!\left(\left\lfloor\frac{k+j}{2}\right\rfloor-j\right)\eta(l-k)\right\rbrace
 \\
 \le C \sum_{j=i}^{k-1} \eta(j-i) \left\lbrace(k-j)\eta\!\left(\left\lfloor\frac{k-j}{2}\right\rfloor\right) + \alpha\!\left(\left\lfloor\frac{k-j}{2}\right\rfloor\right)\right\rbrace
 \\
 = C \sum_{j=0}^{m-1} \eta(j) \left\lbrace(m-j)\eta\!\left(\left\lfloor\frac{m-j}{2}\right\rfloor\right) + \alpha\!\left(\left\lfloor\frac{m-j}{2}\right\rfloor\right)\right\rbrace.
 \end{align*}
 Next we split the last sum above into two parts, keeping in mind that $\alpha$ and $\eta$ are non-increasing and $\eta$ is also summable:
\begin{align*}
&C \sum_{j=0}^{m-1} \eta(j) \left\lbrace(m-j)\eta\!\left(\left\lfloor\frac{m-j}{2}\right\rfloor\right) + \alpha\!\left(\left\lfloor\frac{m-j}{2}\right\rfloor\right)\right\rbrace
\\
&\le C \sum_{j=0}^{\lfloor m/2 \rfloor} \eta(j) \left\lbrace(m-j)\eta\!\left(\left\lfloor\frac{m-j}{2}\right\rfloor\right) + \alpha\!\left(\left\lfloor\frac{m-j}{2}\right\rfloor\right)\right\rbrace 
\\
&+ C \sum_{j=\lfloor m/2 \rfloor + 1}^{m-1} \eta(j) \left\lbrace(m-j)\eta\!\left(\left\lfloor \frac{m-j}{2} \right\rfloor\right) + \alpha\!\left(\left\lfloor\frac{m-j}{2}\right\rfloor\right)\right\rbrace 
\\
&\le  C \sum_{j=0}^{\lfloor m/2\rfloor} \eta(j) \left\lbrace m\eta\!\left(\left\lfloor\frac{m}{4}\right\rfloor\right) + \alpha\!\left(\left\lfloor\frac{m}{4}\right\rfloor\right)\right\rbrace 
\\
&+ 
 C \eta\!\left(\left\lfloor\frac{m}{2}\right\rfloor\right) \sum_{j=m/2}^{m}\left\lbrace m\eta\!\left(\left\lfloor\frac{m-j}{2}\right\rfloor\right) + \alpha\!\left(\left\lfloor\frac{m-j}{2}\right\rfloor\right) \right\rbrace
\\
&\le   C \left\lbrace m\eta\!\left(\left\lfloor\frac{m}{4}\right\rfloor\right)+  \alpha\!\left(\left\lfloor\frac{m}{4}\right\rfloor\right)\right\rbrace +  Cm \eta\!\left(\left\lfloor\frac{m}{2}\right\rfloor\right)\le C_2 \left\lbrace m\eta\!\left(\left\lfloor\frac{m}{4}\right\rfloor\right)+  \alpha\!\left(\left\lfloor\frac{m}{4}\right\rfloor\right)\right\rbrace.
\end{align*}
This completes the proof.
\end{proof} 

The next two lemmas concern the case when $\eta$ and $\alpha$ are polynomial.

\begin{lem}\label{lem:Sbounds_poly}
Let $\eta(n)=Cn^{-\psi}$, $\psi>1$ and $\alpha(n)=Cn^{-\gamma}$, $\gamma>0$. Then
\beqn
 C_{1}m^{-\min\{\psi-1,\gamma\}} \le S(i,k)\le C_{2}m^{-\min\{\psi-1,\gamma\}} \qquad (m = k-i)
\eeqn
whenever $i<k$.

\end{lem}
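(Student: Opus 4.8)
The plan is to reduce everything to the two-sided estimate of Lemma~\ref{lem:Sbounds}(ii) and then substitute the polynomial forms of $\eta$ and $\alpha$. Recall that part (ii) asserts
\[
C_1\{m\eta(m) + \alpha(m)\} \le S(i,k) \le C_2\left\{m\eta\!\left(\left\lfloor m/4\right\rfloor\right) + \alpha\!\left(\left\lfloor m/4\right\rfloor\right)\right\}
\]
for $i<k$, with $m = k-i$. The whole lemma then follows by estimating the two sides using $\eta(n)=Cn^{-\psi}$ and $\alpha(n)=Cn^{-\gamma}$, together with two elementary facts: first, for $m\ge 1$ a sum of two negative powers is comparable to its larger summand, namely $m^{-a}+m^{-b}\le 2\max\{m^{-a},m^{-b}\} = 2m^{-\min\{a,b\}}$ and $m^{-a}+m^{-b}\ge m^{-\min\{a,b\}}$; second, $\lfloor m/4\rfloor$ is comparable to $m$ for large $m$.

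For the lower bound I would simply compute $m\eta(m)+\alpha(m) = Cm^{1-\psi}+Cm^{-\gamma} = C\bigl(m^{-(\psi-1)}+m^{-\gamma}\bigr)$, which is bounded below by $Cm^{-\min\{\psi-1,\gamma\}}$ since the sum dominates its larger term. Feeding this into the left inequality of part (ii) yields $S(i,k)\ge C_1 m^{-\min\{\psi-1,\gamma\}}$ directly.

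For the upper bound I would first treat large $m$. For $m\ge 8$ one has $\lfloor m/4\rfloor\ge m/8$, hence $\eta(\lfloor m/4\rfloor)=C\lfloor m/4\rfloor^{-\psi}\le C\,8^{\psi}m^{-\psi}$ and likewise $\alpha(\lfloor m/4\rfloor)\le C\,8^{\gamma}m^{-\gamma}$. Therefore
\[
m\eta\!\left(\left\lfloor m/4\right\rfloor\right)+\alpha\!\left(\left\lfloor m/4\right\rfloor\right) \le C\bigl(m^{-(\psi-1)}+m^{-\gamma}\bigr) \le 2C\,m^{-\min\{\psi-1,\gamma\}},
\]
which combined with the right inequality of part (ii) gives the claimed upper bound for $m\ge 8$. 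The remaining finitely many values $1\le m\le 7$ are handled separately: there $S(i,k)$ is bounded above by a constant (from part (ii), using the convention $0^{-\psi}=1$ when $\lfloor m/4\rfloor=0$), while $m^{-\min\{\psi-1,\gamma\}}$ is bounded below by a positive constant, so the inequality holds after enlarging $C_2$.

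There is no substantive obstacle; the argument is entirely mechanical once Lemma~\ref{lem:Sbounds}(ii) is available. The only points needing a little care are the comparison $\lfloor m/4\rfloor\asymp m$ — which forces the separate treatment of small $m$, since $\lfloor m/4\rfloor$ vanishes for $m<4$ and the convention $0^{-\psi}=1$ must then be invoked — and the bookkeeping that a sum of two negative powers of $m$ is governed by the slower-decaying one, i.e.\ by the smaller exponent $\min\{\psi-1,\gamma\}$.
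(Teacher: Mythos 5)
Your proof is correct and follows essentially the same route as the paper: both reduce to Lemma~\ref{lem:Sbounds}(ii), substitute the polynomial forms using $\lfloor m/4\rfloor \ge m/8$ for $m\ge 8$, and absorb the finitely many small values of $m$ into the constant $C_2$. The only cosmetic difference is in the small-$m$ case, where the paper bounds $S(i,k)\le Cm^2$ by counting terms while you invoke part (ii) with the convention $0^{-\psi}=1$; both are valid.
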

\begin{proof}
The lower bound follows immediately from Lemma~\ref{lem:Sbounds}(ii). Let first $m\ge 8$. Then~$\lfloor m/4\rfloor \ge m/8$. Thus Lemma~\ref{lem:Sbounds}(ii) yields 
\beqn
 S(i,k) \le  C\! \left\lbrace m\eta\!\left(\frac{m}{8}\right)+  \alpha\!\left(\frac{m}{8}\right)\right\rbrace \le  Cm^{\max\{1-\psi,-\gamma\}},
\eeqn
when $m\ge 8$. Since $S(i,k)\le C(k-i)^2 = Cm^2$ by counting terms, we can choose a large enough $C_{2}$ such that the claimed upper bound holds also for $1\le m<8$. 
\end{proof}

\begin{lem}\label{lem:rbounds}
Let $\eta(n)=Cn^{-\psi}$, $\psi>1$ and $\alpha(n)=Cn^{-\gamma}$, $\gamma>0$.  Then
\beqn
|\bE[\tilde v_i\tilde v_k]| \le Cm^{-\min\{\psi-1,\gamma\}} \qquad (m = k-i)
\eeqn
whenever $i<k$.

\end{lem}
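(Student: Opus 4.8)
The plan is to feed the polynomial hypotheses into the general decomposition already supplied by Lemma~\ref{lem:strong_mixing_bound}, which bounds $|\bE[\tilde v_i\tilde v_k]|$ by a constant times the sum of three terms, and then to show that each of the three is $O(m^{-\min\{\psi-1,\gamma\}})$.

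First, for the leading double sum $\sum_{j=i}^{k-1}\sum_{l=k}^{2k-j-1}|\bE[\tilde c_{ij}\tilde c_{kl}]|$, I would invoke Lemma~\ref{lem:Sbounds}(i) to bound it by $C\,S(i,k)$, and then apply Lemma~\ref{lem:Sbounds_poly} to conclude $S(i,k)\le C m^{-\min\{\psi-1,\gamma\}}$. This piece is already exactly of the target order and, as I expect, dictates the final exponent. Second, the tail term $\sum_{n=m}^\infty\eta(n)=C\sum_{n=m}^\infty n^{-\psi}$ is handled by comparison with an integral: since $\psi>1$ it is $O(m^{1-\psi})=O(m^{-(\psi-1)})$. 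Because $\min\{\psi-1,\gamma\}\le\psi-1$ and $m\ge 1$, this is dominated by $m^{-\min\{\psi-1,\gamma\}}$, which is the one small bookkeeping point I would flag at the outset and reuse throughout.

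The only genuine computation is the third, convolution-type term $\sum_{n=0}^{m-1}\eta(n)\sum_{p=m-n}^\infty\eta(p)$. Using $\sum_{p=q}^\infty\eta(p)\le Cq^{1-\psi}$ for $q\ge 1$, this is at most $C\sum_{n=0}^{m-1}\eta(n)(m-n)^{1-\psi}$. I would split the sum at $n=\lfloor m/2\rfloor$. For $n\le\lfloor m/2\rfloor$ the factor $(m-n)^{1-\psi}\le C m^{1-\psi}$ pulls out, leaving $\sum_n\eta(n)\le C$ by summability; for $n>\lfloor m/2\rfloor$ the factor $\eta(n)\le C m^{-\psi}$ pulls out, leaving $\sum_q q^{1-\psi}$ over $q\lesssim m/2$, which is $O(1)$, $O(\log m)$, or $O(m^{2-\psi})$ according as $\psi>2$, $\psi=2$, or $1<\psi<2$. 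In each of the three regimes the resulting product is $O(m^{1-\psi})$—the borderline logarithm at $\psi=2$ being absorbed into the $m^{1-\psi}=m^{-1}$ coming from the first part—so the whole term is again $\le C m^{-(\psi-1)}\le C m^{-\min\{\psi-1,\gamma\}}$.

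Collecting the three estimates in Lemma~\ref{lem:strong_mixing_bound} yields the claim. The main obstacle, such as it is, lies in the third step: one must verify case by case that the convolution never decays more slowly than the tail order $m^{1-\psi}$, so that the exponent $\min\{\psi-1,\gamma\}$ produced by $S(i,k)$ remains the dominant (slowest-decaying) contribution; everything else is a direct appeal to the previously established lemmas.
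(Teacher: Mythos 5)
Your proposal is correct and follows essentially the same route as the paper: decompose via Lemma~\ref{lem:strong_mixing_bound}, control the leading double sum through Lemmas~\ref{lem:Sbounds}(i) and~\ref{lem:Sbounds_poly}, and show the tail and convolution terms are $O(m^{1-\psi})$. The only cosmetic difference is in the convolution term, where the paper reindexes $\sum_n n^{-\psi}(m-n)^{1-\psi}$ as $\sum_n n^{1-\psi}(m-n)^{-\psi}$ to get $Cm^{1-\psi}$ on both halves of the split without case analysis, whereas you split the original sum directly and verify the three regimes $\psi>2$, $\psi=2$, $1<\psi<2$ by hand; both computations are valid.
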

\begin{proof}
Firstly,
\beq
\sum_{n=m}^\infty \eta(n) \le Cm^{1-\psi}.\label{eq:rbound1}
\eeq
Secondly,
\beqn
\begin{split}
\sum_{n=0}^{m-1}\sum_{p=m-n}^\infty \eta(n)\eta(p) 
& = \sum_{p=m}^\infty \eta(p) + \sum_{n=1}^{m-1}\sum_{p=m-n}^\infty \eta(n)\eta(p)
\\
& = C\sum_{p=m}^\infty p^{-\psi} + C\sum_{n=1}^{m-1}\sum_{p=m-n}^\infty n^{-\psi}p^{-\psi}
\\
& \le Cm^{1-\psi} + C\sum_{n=1}^{m-1} n^{-\psi}(m-n)^{1-\psi}
\\
& = Cm^{1-\psi} + C\sum_{n=1}^{m-1} n^{1-\psi}(m-n)^{-\psi}.
\end{split}
\eeqn
Regarding the last sum appearing above, observe that
\beqn
\begin{split}
\sum_{n=1}^{m/2} n^{1-\psi} (m-n)^{-\psi} 
\le \sum_{n=1}^{m/2} 1^{1-\psi} (m/2)^{-\psi} \le Cm^{1-\psi},
\end{split}
\eeqn
while
\beqn
\begin{split}
\sum_{n=m/2}^{m-1} n^{1-\psi} (m-n)^{-\psi} 
\le \sum_{n=m/2}^{m-1} (m/2)^{1-\psi} (m-n)^{-\psi}
\le (m/2)^{1-\psi} \sum_{n=1}^{m/2} n^{-\psi} \le Cm^{1-\psi}.
\end{split}
\eeqn
In other words, also
\beq\label{eq:rbound2}
\sum_{n=0}^{m-1}\sum_{p=m-n}^\infty \eta(n)\eta(p)  \le Cm^{1-\psi}.
\eeq
Now, by Lemmas~\ref{lem:Sbounds}(i) and~\ref{lem:Sbounds_poly}
we have
\beqn
\sum_{j=i}^{k-1}\sum_{l=k}^{2k-j-1} |\bE[\tilde c_{ij}\tilde c_{kl}]|\le S(i,k) \le Cm^{\max\{1-\psi,-\gamma\}}.
\eeqn
Thus, Lemma \ref{lem:strong_mixing_bound} and bounds \eqref{eq:rbound1} and \eqref{eq:rbound2} yield
\beqn
|\bE[\tilde v_i\tilde v_k]|\le Cm^{1-\psi}+ Cm^{\max\{1-\psi,-\gamma\}}\le Cm^{\max\{1-\psi,-\gamma\}}. 
\eeqn
The proof is complete.
\end{proof}

\begin{lem}\label{lem:tilv_sum2}
Suppose $|\bE[\tilde{v}_i\tilde{v}_k]| \le C(k-i)^{-\beta}$ for all $i<k$. Let $\delta>0$ be arbitrary. Then
\beqn
\frac1n\sum_{k=1}^n \tilde{v}_k = 
\begin{cases}
O(n^{-\frac 12} \log^{\frac32+\delta}n), & \beta > 1,
\\
O(n^{-\frac12+\delta}), & \beta = 1,
\\
O(n^{-\frac{\beta}{2}} \log^{\frac32+\delta}n), & 0<\beta<1
\end{cases}
\eeqn
almost surely.
\end{lem}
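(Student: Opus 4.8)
The plan is to apply the Gál--Koksma theorem (Theorem~\ref{thm:GK}) to the sequence $X_k = \tilde v_k$, feeding it the variance estimate supplied by Lemma~\ref{lem:GK_correlation}. The variables $\tilde v_k = v_k - \bE v_k$ are centered by construction and uniformly bounded (hence square-integrable), so the hypotheses of both results are met. The standing hypothesis of the present lemma is exactly the correlation bound $|\bE[\tilde v_i\tilde v_k]| \le r(|k-i|)$ with $r(k) = Ck^{-\beta}$, and a uniform bound $r(0)=C$ covers the diagonal. Thus Lemma~\ref{lem:GK_correlation} gives, uniformly in $m\ge 0$,
\beqn
\bE\!\left[\left(\sum_{k=m}^{m+n-1}\tilde v_k\right)^2\right] \le C\begin{cases} n, & \beta>1,\\ n\log n, & \beta=1,\\ n^{2-\beta}, & 0<\beta<1.\end{cases}
\eeqn

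First I would dispose of the two generic cases. For $\beta>1$ the bound reads $Cn = Cn^q$ with $q=1$, and for $0<\beta<1$ it reads $Cn^{2-\beta}=Cn^q$ with $q=2-\beta\in(1,2)$. In both cases $1\le q<2$, so Remark~\ref{rmrk:q} applies: there $n^q \le (n+m)^q - m^q$, which upgrades the uniform-in-$m$ estimate $Cn^q$ to the Gál--Koksma hypothesis $\bE[(\sum_{k=m}^{m+n-1}X_k)^2]\le C[(n+m)^q-m^q]$. Theorem~\ref{thm:GK} then yields $\frac1n\sum_{k=1}^n\tilde v_k = O(n^{q/2-1}\log^{3/2+\delta}n)$ almost surely, which evaluates to $O(n^{-1/2}\log^{3/2+\delta}n)$ when $\beta>1$ and to $O(n^{-\beta/2}\log^{3/2+\delta}n)$ when $0<\beta<1$, matching the two corresponding cases of the claim.

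The borderline case $\beta=1$ is the only place needing a small extra step, since $Cn\log n$ is not of pure power form. Here I would fix an auxiliary exponent $\delta'\in(0,1)$ and use $\log n\le C_{\delta'}n^{\delta'}$ to write $Cn\log n \le Cn^{1+\delta'}=Cn^q$ with $q=1+\delta'\in(1,2)$; the argument of the previous paragraph then produces $\frac1n\sum_{k=1}^n\tilde v_k = O(n^{-1/2+\delta'/2}\log^{3/2+\delta_0}n)$ almost surely for any $\delta_0>0$. Given the target exponent $\delta>0$ in the statement, I would choose $\delta'=\delta$ and absorb the subpolynomial factor, using $\log^{3/2+\delta_0}n = O(n^{\delta/2})$, to reach $O(n^{-1/2+\delta})$ as required. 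I expect no genuine analytic obstacle: the entire argument is a direct invocation of Lemmas~\ref{lem:GK_correlation} and Theorem~\ref{thm:GK}, and the only real care lies in keeping track of the two independent roles of $\delta$ (the one in the statement versus the one internal to Gál--Koksma) and in converting the logarithm into an arbitrarily small power in the $\beta=1$ case.
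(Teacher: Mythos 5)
Your proposal is correct and follows essentially the same route as the paper: apply Lemma~\ref{lem:GK_correlation} to get the second-moment bounds, invoke Remark~\ref{rmrk:q} to reduce the G\'al--Koksma hypothesis to a bound of the form $Cn^q$, and apply Theorem~\ref{thm:GK} with $q=1$, $q=1+\delta$, and $q=2-\beta$ in the three cases. The only difference is that you spell out explicitly the absorption of the logarithmic factors in the $\beta=1$ case (via $n\log n = O(n^{1+\delta})$ and $\log^{3/2+\delta_0}n = O(n^{\delta/2})$), which the paper leaves implicit.
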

\begin{proof}
Applying Lemma \ref{lem:GK_correlation} we get
\beqn
\bE\!\left[\left(\sum_{k=m}^{m+n-1} \tilde{v}_k\right)^2\right] \le
C\begin{cases}
n, & \beta>1,
\\
n\log n, & \beta = 1,
\\
n^{2-\beta}, & 0<\beta<1.
\end{cases}
\eeqn
Notice that for any $\ve>0$ we have $n\log n= O(n^{1+\ve})$. Applying Theorem~\ref{thm:GK} with
\beqn
q=
 \begin{cases}
1, & \beta > 1
\\
1+ \delta, & \beta = 1
\\
2-\beta, & 0<\beta < 1
\end{cases}
\eeqn
yields the claim.
\end{proof}

\begin{prop}\label{prop:strong-mixing_GK}
Let $\eta(n)=Cn^{-\psi}$, $\psi>1$ and $\alpha(n)=Cn^{-\gamma}$, $\gamma>0$. Then, for any~$\delta>0$,
\beqn
\frac1n\sum_{k=1}^n \tilde{v}_k = 
\begin{cases}
O(n^{-\frac 12} \log^{\frac32+\delta}n), & \min\{\psi-1,\gamma\} > 1,
\\
O(n^{-\frac12+\delta}), & \min\{\psi-1,\gamma\} = 1,
\\
O(n^{-\frac{\min\{\psi-1,\gamma\}}{2}} \log^{\frac32+\delta}n), & 0<\min\{\psi-1,\gamma\}<1
\end{cases}
\eeqn
almost surely.
\end{prop}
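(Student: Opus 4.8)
The plan is to combine the two immediately preceding lemmas, since between them they already contain all the substance. First I would invoke Lemma~\ref{lem:rbounds}: under the polynomial hypotheses $\eta(n)=Cn^{-\psi}$ with $\psi>1$ and $\alpha(n)=Cn^{-\gamma}$ with $\gamma>0$, it furnishes the correlation bound
\[
|\bE[\tilde v_i\tilde v_k]| \le C(k-i)^{-\beta}, \qquad \beta := \min\{\psi-1,\gamma\},
\]
for all $i<k$. This is exactly the hypothesis required by Lemma~\ref{lem:tilv_sum2}, with the abstract decay exponent $\beta$ there now identified concretely as $\min\{\psi-1,\gamma\}$.

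The second and final step is to feed this value of $\beta$ directly into Lemma~\ref{lem:tilv_sum2}. Its three-case conclusion, stated for a generic exponent $\beta$, becomes precisely the three-case conclusion of the proposition once $\beta$ is replaced throughout by $\min\{\psi-1,\gamma\}$: the threshold $\beta>1$ becomes $\min\{\psi-1,\gamma\}>1$, the borderline $\beta=1$ becomes $\min\{\psi-1,\gamma\}=1$, the regime $0<\beta<1$ becomes $0<\min\{\psi-1,\gamma\}<1$, and the rate $n^{-\beta/2}$ turns into $n^{-\min\{\psi-1,\gamma\}/2}$. The almost sure nature of the statement is inherited unchanged.

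There is essentially no obstacle remaining at this level: the proposition is a corollary of the two lemmas, and all the genuine work has already been discharged upstream. The substantive estimates live in Lemma~\ref{lem:rbounds} (which rests in turn on the decorrelation estimate of Lemma~\ref{lem:tildec_decorr_mixing} and the combinatorial bounds on $S(i,k)$ in Lemmas~\ref{lem:Sbounds} and~\ref{lem:Sbounds_poly}), and in the G\'al--Koksma theorem (Theorem~\ref{thm:GK}) underlying Lemma~\ref{lem:tilv_sum2} through Lemma~\ref{lem:GK_correlation}. The only point worth double-checking is bookkeeping: that the exponent $\beta=\min\{\psi-1,\gamma\}$ delivered by Lemma~\ref{lem:rbounds} is matched to the correct case of Lemma~\ref{lem:tilv_sum2}, and that no value of the minimum is left unaccounted for. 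Since $\min\{\psi-1,\gamma\}$ ranges over all of $(0,\infty)$ as $\psi$ and $\gamma$ vary, every case of Lemma~\ref{lem:tilv_sum2} is reachable and the correspondence is exact.
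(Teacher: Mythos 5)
Your proposal is correct and is exactly the paper's own proof: Lemma~\ref{lem:rbounds} supplies the bound $|\bE[\tilde v_i\tilde v_k]| \le C(k-i)^{-\min\{\psi-1,\gamma\}}$, and Lemma~\ref{lem:tilv_sum2} applied with $\beta=\min\{\psi-1,\gamma\}$ gives the three-case conclusion verbatim. Nothing is missing; the substantive work indeed lives upstream in the lemmas you cite.
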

\begin{proof}
 By Lemma \ref{lem:rbounds}, we have
 $|\bE[\tilde v_i\tilde v_k]| \le   Cm^{-\min\{\psi-1,\gamma\}}$. Applying Lemma~\ref{lem:tilv_sum2} with $\beta=\min\{\psi-1,\gamma\}$ yields the claim.
\end{proof}

We are now in position to prove the main result of this section:

\begin{thm}\label{thm: sigma_n^2}
Assume (SA1) and (SA3) with $\eta(n)=Cn^{-\psi}$, $\psi>1$. Assume (SA2) with $\alpha(n)=Cn^{-\gamma}$, $\gamma>0$.
Then, for arbitrary $\delta>0$,
\beqn
|\sigma_{n}^{2}-\bE\sigma_{n}^{2}| = 
\begin{cases}
O(n^{-\frac 12} \log^{\frac32+\delta}n), & \min\{\psi-1,\gamma\} > 1,
\\
O(n^{-\frac12+\delta}), & \min\{\psi-1,\gamma\} = 1,
\\
O(n^{-\frac{\min\{\psi-1,\gamma\}}{2}} \log^{\frac32+\delta}n), & 0<\min\{\psi-1,\gamma\}<1
\end{cases}
\eeqn
almost surely.
\end{thm}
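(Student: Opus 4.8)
The plan is to combine the deterministic approximation of Corollary~\ref{cor:var_mean_poly} with the almost sure fluctuation bound of Proposition~\ref{prop:strong-mixing_GK} by a single application of the triangle inequality. Recalling that $\tilde\sigma_n^2 = \sigma_n^2 - \bE\sigma_n^2$, I would start from
\beqn
|\sigma_n^2 - \bE\sigma_n^2| \le \left|\tilde\sigma_n^2 - \frac1n\sum_{i=0}^{n-1}\tilde v_i\right| + \left|\frac1n\sum_{i=0}^{n-1}\tilde v_i\right|
\eeqn
and bound the two summands separately. For the first, deterministic, summand, Remark~\ref{rem:alsotilde} lets me apply Corollary~\ref{cor:var_mean_poly} verbatim, giving a bound of order $n^{-1}$, $n^{-1}\log n$, or $n^{1-\psi}$ according as $\psi>2$, $\psi=2$, or $1<\psi<2$; in all three cases this is $O(n^{-d})$ up to a logarithm, where $d = \min\{\psi-1,1\}>0$. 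The second summand is precisely the quantity estimated almost surely in Proposition~\ref{prop:strong-mixing_GK}, whose three rates---with $\beta = \min\{\psi-1,\gamma\}$---are exactly the three rates claimed in the theorem.

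It then remains only to verify that the deterministic summand is negligible relative to the almost sure one, so that the latter dictates the overall rate. The fluctuation term of Proposition~\ref{prop:strong-mixing_GK} decays (up to logarithmic and $\delta$-corrections) like $n^{-e}$ with $e = \tfrac12\min\{\psi-1,\gamma,1\}$, whereas the deterministic term decays like $n^{-d}$ with $d=\min\{\psi-1,1\}$. Since $\min\{\psi-1,\gamma,1\}\le\min\{\psi-1,1\}=d$, I get $e\le\tfrac12 d$, and because $d>0$ whenever $\psi>1$ this inequality is strict: $d>e$. Thus the deterministic correction decays at a strictly faster polynomial rate than the almost sure fluctuation, so it is absorbed into the $O$-terms regardless of the accompanying logarithmic factors, and the rates of Proposition~\ref{prop:strong-mixing_GK} survive unchanged.

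I expect the argument to be essentially bookkeeping: the genuine analytic work has already been done in Lemma~\ref{lem:tildec_decorr_mixing} (the decorrelation estimate combining the memory-loss bound (SA3) with the strong-mixing bound (SA2)), in the $S(i,k)$ estimates of Lemmas~\ref{lem:Sbounds}--\ref{lem:Sbounds_poly}, and in the G\'al--Koksma application underlying Proposition~\ref{prop:strong-mixing_GK}. The only mild subtlety here is to make sure the deterministic approximation error never matches the fluctuation rate; the strict inequality $d>e$ above settles this uniformly across all regimes, so no case-by-case matching of logarithms is needed.
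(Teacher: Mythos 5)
Your proposal is correct and follows essentially the same route as the paper's own proof: the triangle-inequality decomposition into the deterministic approximation error (Corollary~\ref{cor:var_mean_poly} via Remark~\ref{rem:alsotilde}) plus the almost sure fluctuation term (Proposition~\ref{prop:strong-mixing_GK}), with the observation that the fluctuation term dominates. The only cosmetic difference is that you dispose of the domination step with the uniform strict inequality $d=\min\{\psi-1,1\}>\tfrac12\min\{\psi-1,\gamma,1\}=e$, whereas the paper checks the three regimes case by case; both verifications are immediate.
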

\begin{proof}

By Corollary~\ref{cor:var_mean_poly},
\beqn
\left|\sigma_{n}^{2}-\bE\sigma_{n}^{2} - \frac{1}{n}\sum_{i=0}^{n-1}\tilde{v}_{i}\right| \le C 
\begin{cases}
n^{-1}, & \psi > 2,\\
n^{-1}\log n, & \psi = 2,\\
n^{1-\psi}, & 1<\psi<2.
\end{cases}
\eeqn
Combining this with Proposition~\ref{prop:strong-mixing_GK} yields the following upper bounds on $|\sigma_{n}^{2}-\bE\sigma_{n}^{2}|$:
\beqn
\begin{split}
\begin{cases}O(n^{-\frac 12} \log^{\frac32+\delta}n+n^{-1}), & \min\{\psi-1,\gamma\} > 1,
\\
O(n^{-\frac12+\delta}+n^{-1}\log n), & \min\{\psi-1,\gamma\} = 1,
\\
O(n^{-\frac{\min\{\psi-1,\gamma\}}{2}} \log^{\frac32+\delta}n+n^{-\min\{1,\psi-1\}}+n^{-1}\log n), & 0<\min\{\psi-1,\gamma\}<1.
\end{cases}
\end{split}
\eeqn
In each case the first term is the largest, so the proof is complete.
\end{proof}


\section{The term $|\bE\sigma_n^2 - \sigma^2|$}\label{sec:conv of sigma_n^2 to sigma^2}
In this section we formulate general condition that allow to identify the limit $\sigma^2 = \lim_{n\to\infty}\bE\sigma_n^2$ and obtain a rate of convergence. 

Write
\beqn
c_{ij} = \mu(f_i f_j) - \mu(f_i)\mu(f_j)
\eeqn
for brevity.
Then
\beqn
\sigma_n^2 = \frac1n\sum_{i=0}^{n-1}\sum_{j=0}^{n-1} c_{ij} 
= \frac1n\sum_{i=0}^{n-1} \sum_{j=i}^{n-1}(2-\delta_{ij})c_{ij} 
= \frac1n\sum_{i=0}^{n-1} \sum_{k=0}^{n-1-i}(2-\delta_{i,i+k})c_{i,i+k} .
\eeqn
Setting\footnote{The relationship with earlier notations is that $\sum_{k=0}^\infty v_{ik} = v_i$.}
\beqn
v_{ik} = (2-\delta_{k0})[\mu(f_i f_{i+k}) - \mu(f_i)\mu(f_{i+k})]
\eeqn
we arrive at
\beqn
\sigma_n^2 = \frac1n\sum_{i=0}^{n-1} \sum_{k=0}^{n-1-i} v_{ik}
\qquad\text{and}\qquad
\bE\sigma_n^{2} = \frac1n\sum_{i=0}^{n-1} \sum_{k=0}^{n-1-i} \bE v_{ik}.
\eeqn
Recall that
\beq\label{eq:SA_ok}
|v_{ik}| \le 2\eta(k).
\eeq

\subsection{Asymptotics of related double sums of real numbers}
In this subsection we consider double sequences of uniformly bounded numbers $a_{ik}$, $(i,k)\in\bN^2$, with the objective of controlling the sequence 
\beqn
B_n = \frac1n\sum_{i=0}^{n-1} \sum_{k=0}^{n-1-i} a_{ik}
\eeqn
for large values of~$n$. In this subsection, we make the following assumption tailored to our later needs:

\smallskip
\noindent{\bf Standing assumption.} There exists $\eta:\bN\to[0,\infty)$ such that
\beq\label{eq:standing}
|a_{ik}| \le \eta(k)
\qquad
\text{and}
\qquad
\sum_{k=0}^\infty\eta(k)<\infty.
\eeq
We also denote the tail sums of $\eta$ by
\beqn
R(K) = \sum_{k = K+1}^{\infty}\eta(k).
\eeqn

We begin with a handy observation:
\begin{lem}\label{lem:pre-mean}
There exists $C>0$ such that
\beqn
\left | B_n - \frac1n\sum_{i = 0}^{n-1}\sum_{k = 0}^{L} a_{ik} \right| \le C(R(K) + Kn^{-1})
\eeqn
whenever $0<K\le n$ and $K\le L\le\infty$.
\end{lem}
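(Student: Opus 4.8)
The plan is to write the quantity to be estimated as an average over $i$ of an inner truncation error,
\[
B_n - \frac1n\sum_{i=0}^{n-1}\sum_{k=0}^L a_{ik} = \frac1n\sum_{i=0}^{n-1}\left(\sum_{k=0}^{n-1-i} a_{ik} - \sum_{k=0}^{L} a_{ik}\right),
\]
and to control the parenthesized difference for each fixed $i$ via the tail sums of $\eta$. Writing $M_i = n-1-i$, if $M_i\ge L$ the difference equals $\sum_{k=L+1}^{M_i} a_{ik}$ and hence has modulus at most $R(L)$; if $M_i<L$ (the only possibility when $L=\infty$) it equals $-\sum_{k=M_i+1}^{L} a_{ik}$ and has modulus at most $R(M_i)$. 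Both bounds use only $|a_{ik}|\le\eta(k)$ and the definition of $R$.

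The crucial step is to split the outer sum not at the $L$-dependent threshold $M_i=L$, but according to whether $M_i\ge K$. Because $\eta\ge 0$, the tail $R$ is non-increasing, so $R(L)\le R(K)$ (since $L\ge K$) and $R(M_i)\le R(K)$ whenever $M_i\ge K$. Hence for every index with $0\le i\le n-1-K$, i.e.\ $M_i\ge K$, both cases above yield an inner error of at most $R(K)$. There are exactly $n-K$ such indices, so their contribution to the average is at most $\tfrac{n-K}{n}R(K)\le R(K)$.

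For the remaining indices $n-K\le i\le n-1$, of which there are exactly $K$ (this is where $K\le n$ enters, ensuring the two ranges partition $\{0,\dots,n-1\}$, including the edge case $K=n$ where the first range is empty), I would keep only the crude estimate: each inner difference is a tail sum of the $a_{ik}$ and is therefore bounded in modulus by $S:=\sum_{k=0}^\infty\eta(k)<\infty$. These $K$ terms contribute at most $\tfrac{K}{n}S = S\,Kn^{-1}$ to the average. Adding the two contributions and taking $C=\max\{1,S\}$ gives the asserted bound for all $0<K\le n$ and $K\le L\le\infty$.

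The only genuine subtlety, which I would flag as the main obstacle, is precisely this choice of cutting point. Splitting at $M_i=L$ leaves an error of the form $\tfrac1n\sum_m R(m)$, which is tied to $\sum_k k\eta(k)$ and need not have the desired shape; cutting at $M_i=K$ instead, and using the monotonicity of $R$ to absorb both subcases uniformly into $R(K)$, is what forces the estimate into the form $C(R(K)+Kn^{-1})$ with only $K$ terms treated crudely.
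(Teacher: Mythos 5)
Your proof is correct and follows essentially the same route as the paper: both split the outer sum at $i = n-K$, treat the last $K$ indices crudely via the uniform bound $\sum_k \eta(k) < \infty$ to get the $Kn^{-1}$ term, and use monotonicity of $R$ together with $L \ge K$ to absorb the inner truncation errors into $R(K)$. The only difference is organizational — the paper passes through the intermediate $K$-truncated sum and then compares it to the $L$-truncated one, while you compare $B_n$ to the $L$-truncation directly with a case analysis on $M_i$ versus $L$ — but the estimates and the key idea are the same.
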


\begin{proof}
For all choices of $0<K\le n$ we have
\beqn
\begin{split}
B_n & 
= \frac1n\sum_{i = 0}^{n-K-1}\sum_{k = 0}^{n-1-i} a_{ik} + \frac1n\sum_{i = n-K}^{n-1}\sum_{k = 0}^{n-1-i} a_{ik} 
= \frac1n\sum_{i = 0}^{n-K-1}\sum_{k = 0}^{n-1-i} a_{ik} + O(Kn^{-1})
\\
& = \frac1n\sum_{i = 0}^{n-K-1}\sum_{k = 0}^{K} a_{ik} + \frac1n\sum_{i = 0}^{n-K-1}\sum_{k = K+1}^{n-1-i} a_{ik} + O(Kn^{-1})
\\
& = \frac1n\sum_{i = 0}^{n-K-1}\sum_{k = 0}^{K} a_{ik} + O(R(K) + Kn^{-1})
\\
& = \frac1n\sum_{i = 0}^{n-1}\sum_{k = 0}^{K} a_{ik} - \frac1n\sum_{i = n-K}^{n-1}\sum_{k = 0}^{K} a_{ik} + O(R(K) + Kn^{-1})
\\
& = \frac1n\sum_{i = 0}^{n-1}\sum_{k = 0}^{K} a_{ik} + O(R(K) + Kn^{-1}).
\end{split}
\eeqn
The error is uniform because of the uniform condition $|a_{ik}|\le\eta(k)$. For $L\ge K$,
\beqn
\sum_{k = 0}^{L} a_{ik} - \sum_{k = 0}^{K} a_{ik} = \sum_{k = K+1}^{L} a_{ik} = O(R(K))
\eeqn
uniformly, which concludes the proof.
\end{proof}

The following lemma helps identify the limit of $B_n$ and the rate of convergence under certain circumstances:
\begin{lem}\label{lem:mean}
Suppose the limit
\beqn
b_k = \lim_{n\to\infty} \frac1n\sum_{i = 0}^{n-1} a_{ik}
\eeqn
exists for all $k\ge 0$. Then
\beqn
\lim_{n\to\infty} B_n = \sum_{k=0}^\infty b_k.
\eeqn
The series on the right side converges absolutely. Furthermore, denoting
\beqn
r_k(n) =  \frac1n\sum_{i = 0}^{n-1} a_{ik} - b_k 
\eeqn
there exists $C>0$ such that
\beq\label{eq:mean-rate}
\left|B_n - \sum_{k=0}^\infty b_k\right| \le C\!\left(\left|\sum_{k = 0}^{K}r_k(n)\right| + R(K) + Kn^{-1}\right)
\eeq
holds whenever $0<K\le n$.
\end{lem}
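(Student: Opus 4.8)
The plan is to deduce everything from Lemma~\ref{lem:pre-mean} together with the uniform domination $|a_{ik}|\le\eta(k)$, which does almost all the work. First I would record the elementary but crucial observation that $|b_k|\le\eta(k)$: since $\bigl|\frac1n\sum_{i=0}^{n-1}a_{ik}\bigr|\le\eta(k)$ for every $n$, the limit $b_k$ inherits the same bound. Consequently $\sum_{k=0}^\infty|b_k|\le\sum_{k=0}^\infty\eta(k)<\infty$, which gives absolute convergence of the series on the right-hand side, and moreover the tail estimate $\bigl|\sum_{k=K+1}^\infty b_k\bigr|\le R(K)$ that I will need at the end.

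Next I would apply Lemma~\ref{lem:pre-mean} with the choice $L=K$, which yields
\beqn
\left|B_n - \frac1n\sum_{i=0}^{n-1}\sum_{k=0}^{K} a_{ik}\right| \le C\bigl(R(K) + Kn^{-1}\bigr)
\eeqn
for $0<K\le n$. The point of taking $L=K$ is that the remaining double sum is \emph{finite} in $k$, so I may legitimately interchange the two summations and write
\beqn
\frac1n\sum_{i=0}^{n-1}\sum_{k=0}^{K} a_{ik} = \sum_{k=0}^{K}\frac1n\sum_{i=0}^{n-1} a_{ik} = \sum_{k=0}^{K} b_k + \sum_{k=0}^{K} r_k(n),
\eeqn
using the very definition $r_k(n)=\frac1n\sum_{i=0}^{n-1}a_{ik}-b_k$. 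Combining the two displays with the tail bound $\bigl|\sum_{k>K}b_k\bigr|\le R(K)$ and a triangle inequality, and then absorbing the two occurrences of $R(K)$ into a single constant, produces exactly the claimed quantitative estimate~\eqref{eq:mean-rate}.

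It then remains only to extract the limit statement from~\eqref{eq:mean-rate}, and here the sole subtlety is the order of quantifiers. Given $\varepsilon>0$, I would first fix $K$ so large that $CR(K)<\varepsilon/2$; this is possible since $R(K)\to0$ as $K\to\infty$ by summability of $\eta$. With $K$ now frozen, the remaining term $C\bigl(\bigl|\sum_{k=0}^{K}r_k(n)\bigr|+Kn^{-1}\bigr)$ tends to $0$ as $n\to\infty$, because $r_k(n)\to0$ for each of the finitely many indices $k\le K$ (by the hypothesis defining $b_k$) and $Kn^{-1}\to0$. Hence this term is $<\varepsilon/2$ for all sufficiently large $n$, giving $\bigl|B_n-\sum_{k=0}^\infty b_k\bigr|<\varepsilon$ and therefore the convergence $B_n\to\sum_{k=0}^\infty b_k$.

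I do not expect a genuine obstacle here: the argument is essentially a truncation controlled uniformly by $\eta$, followed by a finite interchange of summation. The only place demanding care is that the interchange of sums and the termwise passage $r_k(n)\to0$ must be carried out \emph{after} truncating at a fixed $K$, so that one never interchanges a limit with an infinite sum; the domination $|a_{ik}|\le\eta(k)$ is precisely what licenses the uniform truncation error $R(K)$ that makes this splitting legitimate.
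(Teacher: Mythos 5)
Your proof is correct and follows essentially the same route as the paper's: the bound $|b_k|\le\eta(k)$ for absolute convergence and the tail estimate, Lemma~\ref{lem:pre-mean} with $L=K$, the rewriting via the definition of $r_k(n)$, and the final fix-$K$-then-let-$n\to\infty$ argument all match the paper's proof step for step. Your explicit remarks about the finite interchange of sums and the order of quantifiers are points the paper leaves implicit, but they do not constitute a different argument.
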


\begin{proof}
Since
\beqn
\left|\frac1n\sum_{i = 0}^{n-1} a_{ik}\right| \le \eta(k),
\eeqn
also $|b_k|\le \eta(k)$, so the series $\sum_{k=0}^\infty b_k$ converges absolutely. 
Lemma~\ref{lem:pre-mean} with $L=K$ yields
\beqn
\begin{split}
B_n = \sum_{k = 0}^{K} \frac1n\sum_{i = 0}^{n-1} a_{ik} + O(R(K) + Kn^{-1})
\end{split}
\eeqn
uniformly for all $0<K\le n$. Thus, the definition of $r_k(n)$ gives
\beqn
\begin{split}
B_n 
& = \sum_{k = 0}^{K} b_{k} + O\!\left(\left|\sum_{k = 0}^{K}r_k(n)\right| + R(K) + Kn^{-1}\right).
\end{split}
\eeqn
Now $|b_k|\le\eta(k)$ yields~\eqref{eq:mean-rate}. To prove the convergence of $B_n$, consider~\eqref{eq:mean-rate} and fix an arbitrary~$\ve>0$. Fix~$K$ so large that~$R(K)<\ve/2C$. Since $\bigl|\sum_{k = 0}^{K}r_k(n)\bigr| + Kn^{-1}$ tends to zero with increasing $n$, it is bounded by $\ve/2C$ for all large $n$. Then $|B_n -  \sum_{k = 0}^{\infty} b_{k} | < \ve$.
\end{proof}

\subsection{Convergence of $\bE\sigma_n^2$: a general result}
In this subsection we apply the results of the preceding subsection to the sequence
\beqn
\bE\sigma_n^{2} = \frac1n\sum_{i=0}^{n-1} \sum_{k=0}^{n-1-i} \bE v_{ik}
\eeqn
where
\beqn
\bE v_{ik} = (2-\delta_{k0})\,\bE[\mu(f_i f_{i+k}) - \mu(f_i)\mu(f_{i+k})].
\eeqn
Recall from~\eqref{eq:SA_ok} and~\eqref{eq:weak_eta} of (SA1) that the Standing assumption in~\eqref{eq:standing} is satisfied:
$
|\bE v_{ik}| \le 2\eta(k)
$
and $\sum_{k=0}^\infty\eta(k)<\infty$.
The next theorem is nothing but a rephrasing of Lemma~\ref{lem:mean} in the case $a_{ik} = \bE v_{ik}$ at hand.
\begin{thm}\label{thm:var_limit}
Suppose the limit
\beqn
V_k = \lim_{n\to\infty} \frac1n\sum_{i = 0}^{n-1} \bE v_{ik}
\eeqn
exists for all $k\ge 0$. The series
\beqn
\sigma^2 = \sum_{k=0}^\infty V_k
\eeqn
is absolutely convergent, and
\beqn
\lim_{n\to\infty} \bE\sigma_n^{2} = \sigma^2.
\eeqn
In particular, $\sigma^2\ge 0$. Furthermore, there exists a constant $C>0$ such that
\beqn
|\bE\sigma_n^{2} - \sigma^2| \le C\!\left(\left|\,\sum_{k = 0}^{K}\!\left(\frac1n\sum_{i = 0}^{n-1} \bE v_{ik} - V_k \right)\right| + \sum_{k=K+1}^\infty\eta(k) + Kn^{-1}\right)
\eeqn
holds whenever $0<K\le n$.
\end{thm}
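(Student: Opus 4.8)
The plan is to obtain the statement as a direct specialization of Lemma~\ref{lem:mean} to the double sequence $a_{ik} = \bE v_{ik}$. The first step is to verify that this sequence falls under the Standing assumption of the preceding subsection. This has in fact already been recorded just before the statement: the bound $|\bE v_{ik}| \le 2\eta(k)$ together with the summability $\sum_{k=0}^\infty \eta(k) < \infty$ from (SA1) shows that~\eqref{eq:standing} holds, the harmless factor of $2$ being absorbable either into $\eta$ or, at the end, into the constant $C$. With this in place, the quantity $B_n$ in the notation of Lemma~\ref{lem:mean} is precisely $\bE\sigma_n^2 = \frac1n\sum_{i=0}^{n-1}\sum_{k=0}^{n-1-i}\bE v_{ik}$.

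The second step is to match the hypotheses. By assumption the limits $b_k = \lim_{n\to\infty}\frac1n\sum_{i=0}^{n-1}\bE v_{ik}$ exist and equal $V_k$. Lemma~\ref{lem:mean} then yields at once that $\sigma^2 = \sum_{k=0}^\infty V_k$ converges absolutely, that $\lim_{n\to\infty}\bE\sigma_n^2 = \sigma^2$, and that the quantitative estimate~\eqref{eq:mean-rate} holds for $0<K\le n$. Reading off the latter with the identifications $r_k(n) = \frac1n\sum_{i=0}^{n-1}\bE v_{ik} - V_k$ and $R(K) = \sum_{k=K+1}^\infty \eta(k)$ reproduces exactly the displayed bound on $|\bE\sigma_n^2 - \sigma^2|$.

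It remains to note the nonnegativity $\sigma^2 \ge 0$, which is the one assertion not covered verbatim by Lemma~\ref{lem:mean}. This follows from a separate elementary observation: for each $n$ and each $\omega$ one has $\sigma_n^2 = \Var_\mu \bar W_n \ge 0$, hence $\bE\sigma_n^2 \ge 0$, and since $\sigma^2$ is the limit of these nonnegative numbers it too satisfies $\sigma^2 \ge 0$. There is essentially no analytic obstacle here, as all the real work was done in Lemma~\ref{lem:mean}; the only point demanding a moment's care is the bookkeeping of the factor $2$ in $|\bE v_{ik}| \le 2\eta(k)$ against the normalization $|a_{ik}|\le\eta(k)$ in~\eqref{eq:standing}, which does not alter the final form of the bound.
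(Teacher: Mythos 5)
Your proposal is correct and matches the paper exactly: the paper gives no separate proof, stating only that the theorem ``is nothing but a rephrasing of Lemma~\ref{lem:mean} in the case $a_{ik} = \bE v_{ik}$,'' which is precisely your argument, including the absorption of the factor $2$ from $|\bE v_{ik}|\le 2\eta(k)$. Your explicit remark that $\sigma^2\ge 0$ follows from $\bE\sigma_n^2 = \bE\Var_\mu \bar W_n \ge 0$ fills in the one point the paper leaves implicit, and it is the right justification.
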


\subsection{Convergence of $\bE\sigma_n^2$: asymptotically mean stationary~$\bP$}
For the rest of the section we assume~$\bP$ is asymptotically mean stationary, with mean $\bar\bP$. In other words, there exists a measure~$\bar\bP$ such that, given a bounded measurable $g:\Omega\to\bR$,
\beq\label{eq:AMS}
\lim_{n\to\infty}\frac1n\sum_{i=0}^{n-1}\int g\circ \tau^i\,\rd\bP = \int g\,\rd\bar\bP.
\eeq
The measure $\bar\bP$ is then $\tau$-invariant. We denote $\bar\bE g=\int g\,\rd\bar\bP$. We will shortly impose additional rate conditions; see~\eqref{eq:AMSrate}.

Recall the cocycle property of the random compositions.  In what follows, it will be convenient to use the notations
\beqn
g_{ik}^1(\omega) = \mu(f_i f_{i+k}) = \mu(f\circ\varphi(i,\omega) f\circ\varphi(i+k,\omega))
\eeqn
and
\beqn
g_{ik}^2(\omega) = \mu(f_i) \mu(f_{i+k}) = \mu(f\circ\varphi(i,\omega))\mu(f\circ\varphi(i+k,\omega)).
\eeqn

For the results of this section we need the following preliminary lemma, which crucially relies on the memory-loss property (SA3), assumed to hold throughout this text.

\begin{lem}\label{lem:pre}
There exists a constant $C>0$ such that
\beq\label{eq:pre}
|g_{ik}^a - g_{rk}^a\circ\tau^{i-r}| \le C\eta(r)
\eeq
for all $0\le r\le i$, $k\ge 0$ and $a\in\{1,2\}$.
\end{lem}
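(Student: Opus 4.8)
The plan is to express both quantities appearing in~\eqref{eq:pre} as the integral of \emph{one and the same} test function lying in $\cG_i$ against the two measures $\mu_i$ and $\mu_{i,(i-r)+1}$, after which the memory-loss bound (SA3) delivers the factor $\eta(r)$ directly. I first treat the case $a=1$. Using the cocycle identity $\varphi(i+k,\omega) = \varphi(k,\tau^i\omega)\circ\varphi(i,\omega)$ together with $f_{i+k,i+1} = f\circ\varphi(k,\tau^i\omega)$, one checks that $f\circ\varphi(i+k,\omega) = f_{i+k,i+1}\circ\varphi(i,\omega)$, and hence $g_{ik}^1(\omega) = \mu\bigl((f\cdot f_{i+k,i+1})\circ\varphi(i,\omega)\bigr) = \mu_i(f\cdot f_{i+k,i+1})$. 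Note that $f\cdot f_{i+k,i+1}$ is of the form $ff_{l,i+1}$ with $l = i+k\ge i$, so it belongs to $\cG_i$.

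Next I rewrite the shifted term. Here the point is that precomposing with $\tau^{i-r}$ lines up exactly with dropping the first $i-r$ maps from the composition. The identity $\mu_{k,s+1} = \varphi(k-s,\tau^s\omega)_*\mu$, applied with $k=i$ and $s = i-r$, gives $\mu_{i,(i-r)+1} = \varphi(r,\tau^{i-r}\omega)_*\mu$. Applying the cocycle identity $\varphi(r+k,\tau^{i-r}\omega) = \varphi(k,\tau^i\omega)\circ\varphi(r,\tau^{i-r}\omega)$, the same computation as above yields
\beqn
(g_{rk}^1\circ\tau^{i-r})(\omega) = \mu\bigl((f\cdot f_{i+k,i+1})\circ\varphi(r,\tau^{i-r}\omega)\bigr) = \mu_{i,(i-r)+1}(f\cdot f_{i+k,i+1}).
\eeqn
Thus both $g_{ik}^1(\omega)$ and $(g_{rk}^1\circ\tau^{i-r})(\omega)$ are the integral of the \emph{single} observable $g = f\cdot f_{i+k,i+1}\in\cG_i$ against $\mu_i$ and $\mu_{i,(i-r)+1}$ respectively. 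Now (SA3) applied with $k = i$ and with its parameter equal to $i-r$ bounds the difference by $C\eta(i-(i-r)) = C\eta(r)$, which is exactly~\eqref{eq:pre} for $a = 1$.

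For $a = 2$ the two factors are handled separately. By the same cocycle manipulations, $\mu(f_i) = \mu_i(f)$ and $\mu(f_{i+k}) = \mu_i(f_{i+k,i+1})$, while the corresponding shifted factors equal $\mu_{i,(i-r)+1}(f)$ and $\mu_{i,(i-r)+1}(f_{i+k,i+1})$; here both $f = f_{i,i+1}$ and $f_{i+k,i+1}$ lie in $\cG_i$. Writing $g_{ik}^2 - g_{rk}^2\circ\tau^{i-r}$ as a telescoping difference of the two products and using that each factor is bounded by $\|f\|_\infty$ (as $\mu$ is a probability measure), two applications of (SA3), each giving $C\eta(r)$, yield the claim.

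The only genuinely delicate point is the index bookkeeping: one must verify that the shift $\tau^{i-r}$ and the truncation defining $\mu_{i,(i-r)+1}$ are the same operation, so that the two quantities become the pairing of \emph{identical} test functions with the two measures compared by (SA3). Once the cocycle identities are assembled correctly, so that the parameter of (SA3) reads $i-r$ and hence the bound reads $\eta(r)$ rather than $\eta(i-r)$, the estimate is immediate.
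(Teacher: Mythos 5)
Your proof is correct and takes essentially the same approach as the paper's: both arguments use the cocycle identity to express $g^a_{ik}$ and $g^a_{rk}\circ\tau^{i-r}$ as pairings of test functions in $\cG_i$ against the two measures $\mu_i$ and $\mu_{i,(i-r)+1}$ (equivalently, the cocycle form of (SA3)), applying the memory-loss bound once with $g = ff_{i+k,i+1}$ for $a=1$ and twice after telescoping the product for $a=2$. The only cosmetic difference is that for $a=2$ the paper takes $g=f$ at the two scales $i$ and $i+k$, obtaining $\eta(r)+\eta(r+k)$ and then invoking monotonicity of $\eta$, whereas you pair $f$ and $f_{i+k,i+1}$ both at scale $i$ and get $C\eta(r)$ directly.
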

\begin{proof}
Note that we may rewrite the memory-loss property in~\eqref{eq:memloss_cocycle} as
\beqn
|\mu(g\circ\varphi(j,\omega))- \mu(g\circ\varphi(r,\tau^{j-r}\omega))| \le C\eta(r),
\eeqn
for all $r\le j$.
Thus, choosing $g=f$ (recall $f = f_{j,j+1}\in\cG_j$ for all $j$) yields
\beqn
\begin{split}
& \ |g_{ik}^2 - g_{rk}^2\circ\tau^{i-r}|\\
 = & \ |\mu(f\circ\varphi(i,\omega))\mu(f\circ\varphi(i+k,\omega))- \mu(f\circ\varphi(r,\tau^{i-r}\omega))\mu(f\circ\varphi(r+k,\tau^{i-r}\omega)) |
 \\
 \le & \ |\mu(f\circ\varphi(i,\omega))| |\mu(f\circ\varphi(i+k,\omega))- \mu(f\circ\varphi(r+k,\tau^{i-r}\omega))|
 \\
& \quad +  \ |\mu(f\circ\varphi(i,\omega))- \mu(f\circ\varphi(r,\tau^{i-r}\omega))| |\mu(f\circ\varphi(r+k,\tau^{i-r}\omega))|
\\
\le  & \ C(\eta(r+k)+\eta(r))\le C\eta(r).
\end{split}
\eeqn
On the other hand, choosing $g=ff_{i+k,i+1}=f f\circ \varphi(k,\tau^{i}\omega)$ gives
\beqn
\begin{split}
& \ |g_{ik}^1 - g_{rk}^1\circ\tau^{i-r}|
\\
 = & \ |\mu(f\circ\varphi(i,\omega) f\circ\varphi(i+k,\omega))- \mu(f\circ\varphi(r,\tau^{i-r}\omega) f\circ\varphi(r+k,\tau^{i-r}\omega)) |
\\
=& \ |\mu(g\circ \varphi(i,\omega))-\mu(g\circ\varphi(r,\tau^{i-r}\omega))|
\le   C\eta(r),
 \end{split}
\eeqn
which completes the proof.
\end{proof}

The following lemma guarantees that both limits $\lim_{n\rightarrow \infty}n^{-1}\sum_{i=0}^{n-1} \bE \mu(f_i f_{i+k})$ and $\lim_{n\rightarrow \infty}n^{-1}\sum_{i=0}^{n-1} \bE \mu(f_i)\mu(f_{i+k})$ exist and can be expressed in terms of $\bar{\bP}$.

\begin{lem} \label{lem: bar E limits}
For all $i, k\ge 0$ and $a\in\{1,2\}$
\beqn
\lim_{n\rightarrow\infty} n^{-1}\sum_{i=0}^{n-1}\bE g_{ik}^a=\lim_{j\rightarrow \infty} \bar{\bE}g_{jk}^{a}.
\eeqn
In particular, the limits exist.
\end{lem}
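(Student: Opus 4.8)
The statement involves only $k$ and $a$ (the index $i$ on the left is a summation variable, played on the right by $j$), so I fix $k\ge 0$ and $a\in\{1,2\}$ throughout. The plan is to prove the two halves separately: first that the right-hand limit $L:=\lim_{j\to\infty}\bar\bE g_{jk}^a$ exists, and then that the Ces\`aro averages on the left converge to the same value. Both parts hinge on Lemma~\ref{lem:pre}, which lets me replace $g_{ik}^a$ by the shifted copy $g_{rk}^a\circ\tau^{i-r}$ up to an error $C\eta(r)$, combined respectively with the $\tau$-invariance of $\bar\bP$ and with the asymptotic mean stationarity~\eqref{eq:AMS}.

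For the right-hand limit I would apply $\bar\bE$ to the bound~\eqref{eq:pre}. Since $\bar\bP$ is $\tau$-invariant, $\bar\bE(g_{rk}^a\circ\tau^{i-r})=\bar\bE g_{rk}^a$, so for all $r\le j$ the inequality becomes $|\bar\bE g_{jk}^a-\bar\bE g_{rk}^a|\le C\eta(r)$. Because $\eta$ is summable, $\eta(r)\to 0$; hence given $\ve>0$ I pick $r$ with $2C\eta(r)<\ve$ and obtain $|\bar\bE g_{j_1 k}^a-\bar\bE g_{j_2 k}^a|\le 2C\eta(r)<\ve$ for all $j_1,j_2\ge r$. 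This is the Cauchy criterion, so $L=\lim_{j\to\infty}\bar\bE g_{jk}^a$ exists.

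For the left-hand side, fix $r\ge 0$ and split the average at $i=r$:
\beqn
\frac1n\sum_{i=0}^{n-1}\bE g_{ik}^a = \frac1n\sum_{i=0}^{r-1}\bE g_{ik}^a + \frac1n\sum_{i=r}^{n-1}\bE g_{ik}^a.
\eeqn
The initial block is $O(r/n)$ and vanishes as $n\to\infty$ with $r$ fixed, since $g_{ik}^a$ is bounded by (SA1). In the tail block, where $i\ge r$, I replace each $g_{ik}^a$ by $g_{rk}^a\circ\tau^{i-r}$, which by~\eqref{eq:pre} costs a total error at most $C\eta(r)$, uniformly in $n$. Reindexing $m=i-r$ turns the replaced sum into $\frac{n-r}{n}\cdot\frac{1}{n-r}\sum_{m=0}^{n-r-1}\int g_{rk}^a\circ\tau^m\,\rd\bP$, which converges to $\bar\bE g_{rk}^a$ by~\eqref{eq:AMS} applied to the bounded measurable function $g_{rk}^a$ (the prefactor $(n-r)/n\to 1$). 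Combining the three contributions yields
\beqn
\limsup_{n\to\infty}\left|\frac1n\sum_{i=0}^{n-1}\bE g_{ik}^a - \bar\bE g_{rk}^a\right| \le C\eta(r),
\eeqn
valid for every $r$.

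Finally I would let $r\to\infty$: the right-hand side tends to $0$ while $\bar\bE g_{rk}^a\to L$, so both the $\limsup$ and the matching $\liminf$ of the left-hand averages are squeezed onto $L$, giving convergence to $L=\lim_{j\to\infty}\bar\bE g_{jk}^a$. The only delicate point is the order of limits: one must fix $r$ large enough to make both $C\eta(r)$ and $|\bar\bE g_{rk}^a-L|$ small \emph{before} sending $n\to\infty$, and verify that the error from Lemma~\ref{lem:pre} is genuinely uniform in $n$ so that it survives the $\limsup$. Everything else is routine bookkeeping.
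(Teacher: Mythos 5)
Your proposal is correct and follows essentially the same route as the paper: both establish that $(\bar\bE g_{jk}^a)_j$ is Cauchy by combining Lemma~\ref{lem:pre} with the $\tau$-invariance of $\bar\bP$, and then compare the Ces\`aro average $\frac1n\sum_{i=0}^{n-1}\bE g_{ik}^a$ with $\bar\bE g_{rk}^a$ via exactly the same three error sources (the initial block of $r$ terms, the replacement error $C\eta(r)$ from Lemma~\ref{lem:pre}, and the asymptotic mean stationarity~\eqref{eq:AMS} applied to the fixed function $g_{rk}^a$). The only difference is presentational --- you organize the argument as a $\limsup$ bound in $n$ followed by $r\to\infty$, while the paper fixes $r$ and $n_0$ for a given $\ve$ and adds five $\ve/5$ estimates --- and your handling of the order of limits is sound.
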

\begin{proof} First we make the observation that since $\bar \bP$ is stationary, \eqref{eq:pre} implies
\beqn
|\bar\bE g_{ik}^a - \bar\bE g_{rk}^a| \le C\eta(r)
\eeqn
whenever $i\ge r$. From assumption \eqref{eq:weak_eta} it follows that $\lim_{r\to\infty}\eta(r)=0$.
The sequence $(\bar\bE g_{ik}^a)_{i=0}^\infty$ is therefore Cauchy, so $\lim_{i\rightarrow\infty} \bar{\bE}g_{ik}^a$ exists and respects the same bound, i.e., 
\beq\label{eq:bar E conv}
|\bar{\bE} g_{rk}^a - \lim_{i\rightarrow\infty} \bar{\bE}g_{ik}^a| \le C\eta(r).
\eeq
 We are now ready to show that $\lim_{n\rightarrow\infty} n^{-1}\sum_{i=0}^{n-1}\bE g_{ik}^a$ exists and in the process we see that it is equal to $\lim_{j\rightarrow \infty} \bar{\bE}g_{jk}^{a}$.

Let $\ve>0$. Choose $r\in \bN$ such that $C\eta(r)< \ve/5$, where $C$ is the same constant as above. Then choose $n_{0}\in \bN$ that satisfies two following conditions. First, ~$\|f\|_{\infty}^{2}r/n_{0}< \ve/5$. Second, by~\eqref{eq:AMS}, $|n^{-1}\sum_{i=0}^{n-1}\bE g_{rk}^{a}\circ \tau^{i}-\bar{\bE}g_{rk}^{a}|< \ve/5$ for all $n\ge n_{0}$. Next we show that $|n^{-1}\sum_{i=0}^{n-1}\bE g_{ik}^{a}- \lim_{j\rightarrow\infty}\bar{\bE}g_{ik}^{a}|<\ve$  for all~$n\ge n_{0}$.

The following five estimates yield the desired result:

In this first estimate, note that $\|g_{ik}^{a}\|_{\infty}\le \|f\|_{\infty}^{2}$ for all $i,k\in \bN$ and $a\in \{1,2\}$: 
\beqn
\left|\frac{1}{n} \sum_{i=0}^{n-1}\bE g_{ik}^{a}- \frac{1}{n} \sum_{i=r}^{n-1}\bE g_{ik}^{a}\right|\le \|f^{2}\|_{\infty}n^{-1}r < \frac{\ve}{5}.
\eeqn
In the second estimate, we apply \eqref{eq:pre}:
\beqn
\left|\frac{1}{n} \sum_{i=r}^{n-1}\bE g_{ik}^{a}- \frac{1}{n} \sum_{i=0}^{n-r-1}\bE g_{rk}^{a}\circ \tau^{i}\right|= \left|\frac{1}{n} \sum_{i=r}^{n-1}\bE g_{ik}^{a}- \frac{1}{n} \sum_{i=r}^{n-1}\bE g_{rk}^{a}\circ \tau^{i-r}\right|\le C\eta(r)< \frac{\ve}{5}.
\eeqn
The third estimate follows the same reasoning as the first:
\beqn
\left| \frac{1}{n} \sum_{i=0}^{n-r-1}\bE g_{rk}^{a}\circ \tau^{i} -  \frac{1}{n} \sum_{i=0}^{n-1}\bE g_{rk}^{a}\circ \tau^{i} \right|\le \|f\|_{\infty}^{2}n^{-1}r < \frac{\ve}{5}.
\eeqn
The fourth estimate follows by the definition of $n_{0}$:
\beqn
\left| \frac{1}{n} \sum_{i=0}^{n-1}\bE g_{rk}^{a}\circ \tau^{i} -  \bar{\bE}g_{rk}^{a} \right|  < \frac{\ve}{5}.
\eeqn
The last estimate holds by \eqref{eq:bar E conv}:
\beqn
\left| \bar{\bE}g_{rk}^{a} - \lim_{j\rightarrow \infty} \bar{\bE}g_{jk}^{a} \right| \le C\eta(r)< \frac{\ve}{5}.
\eeqn

These estimates combined, yield $|n^{-1}\sum_{i=0}^{n-1}\bE g_{ik}^a -\lim_{j\rightarrow \infty} \bar{\bE}g_{jk}^{a}| < \ve$ for all $n\ge n_{0}$. Since ~$\lim_{j\rightarrow \infty} \bar{\bE}g_{jk}^{a}$ exists, then also $\lim_{i\rightarrow \infty}n^{-1}\sum_{i=0}^{n-1}\bE g_{ik}^a$ exists and is equal to it. 
\end{proof}

Theorem \ref{thm:var_limit} yields the next result as a corollary.
\begin{thm}\label{thm:sigma2n conv}
The series
\beqn
\sigma^2 = \sum_{k=0}^\infty V_k,
\eeqn
where
\beqn
V_{k}=(2-\delta_{k0})\lim_{r\to\infty}    \bar{\bE} [\mu(f_r f_{r+k}) - \mu(f_r)\mu(f_{r+k})],
\eeqn
is absolutely convergent, 
and 
\beqn
\lim_{n\to\infty}
\bE\sigma_n^{2} = \sigma^2.
\eeqn
\end{thm}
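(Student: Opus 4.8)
The plan is to obtain the statement as an immediate corollary of Theorem~\ref{thm:var_limit}, whose sole hypothesis is the existence of the Ces\`aro limit $V_k = \lim_{n\to\infty}\frac1n\sum_{i=0}^{n-1}\bE v_{ik}$ for each fixed $k\ge 0$. Indeed, once this is verified, Theorem~\ref{thm:var_limit} delivers verbatim both the absolute convergence of $\sum_{k=0}^\infty V_k$ and the identity $\lim_{n\to\infty}\bE\sigma_n^2 = \sigma^2$. The Standing assumption~\eqref{eq:standing} required by that theorem is already recorded for $a_{ik}=\bE v_{ik}$ in the discussion preceding it, via $|\bE v_{ik}|\le 2\eta(k)$ and $\sum_k\eta(k)<\infty$. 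Thus the entire task reduces to checking that $V_k$ exists and coincides with the claimed formula.

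First I would expand $\bE v_{ik}$ using the notations introduced before Lemma~\ref{lem:pre}: by definition $\bE v_{ik} = (2-\delta_{k0})\,\bE[g_{ik}^1 - g_{ik}^2]$. Since the prefactor $(2-\delta_{k0})$ does not depend on $i$, the Ces\`aro average splits as
\beqn
\frac1n\sum_{i=0}^{n-1}\bE v_{ik} = (2-\delta_{k0})\!\left(\frac1n\sum_{i=0}^{n-1}\bE g_{ik}^1 - \frac1n\sum_{i=0}^{n-1}\bE g_{ik}^2\right).
\eeqn
Now Lemma~\ref{lem: bar E limits} supplies precisely what is needed: for each $a\in\{1,2\}$ it guarantees that $\lim_{n\to\infty}\frac1n\sum_{i=0}^{n-1}\bE g_{ik}^a$ exists and equals $\lim_{r\to\infty}\bar\bE g_{rk}^a$. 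Taking the difference and using linearity of limits, I conclude that $V_k$ exists and
\beqn
V_k = (2-\delta_{k0})\lim_{r\to\infty}\bar\bE[g_{rk}^1 - g_{rk}^2] = (2-\delta_{k0})\lim_{r\to\infty}\bar\bE[\mu(f_r f_{r+k}) - \mu(f_r)\mu(f_{r+k})],
\eeqn
which is exactly the stated expression. Feeding this into Theorem~\ref{thm:var_limit} completes the argument.

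The substantive work behind this theorem has in fact been discharged already in Lemma~\ref{lem: bar E limits}, whose proof combines the asymptotic mean stationarity~\eqref{eq:AMS} of $\bP$ with the approximate stationarity of $g_{ik}^a$ furnished by the memory-loss estimate~\eqref{eq:pre}. Consequently no genuine obstacle remains at the level of the present theorem; the only point deserving a moment's care is that the limits for $a=1$ and $a=2$ exist \emph{separately}, which is what legitimizes splitting the difference and recombining the two limits into a single one. This is licensed precisely because Lemma~\ref{lem: bar E limits} establishes each of the two limits individually rather than merely their difference.
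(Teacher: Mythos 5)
Your proposal is correct and follows essentially the same route as the paper: the paper's own proof likewise invokes Lemma~\ref{lem: bar E limits} separately for the two families $\mu(f_i f_{i+k})$ and $\mu(f_i)\mu(f_{i+k})$ (i.e.\ $g_{ik}^1$ and $g_{ik}^2$) to identify $V_k$, and then concludes by Theorem~\ref{thm:var_limit}. Your remark that the two limits must exist individually before the difference can be recombined is exactly the point the paper's proof makes implicit by stating both limits separately.
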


\begin{proof}
Recall that $\bE v_{ik} = (2-\delta_{k0})\,\bE[\mu(f_i f_{i+k}) - \mu(f_i)\mu(f_{i+k})]$.
By Lemma \ref{lem: bar E limits} the limits $\lim_{n\rightarrow\infty} n^{-1}\sum_{i=0}^{n-1}\bE  \mu(f_i f_{i+k})=\lim_{j\rightarrow \infty} \bar{\bE} \mu(f_j f_{j+k})$  and $\lim_{n\rightarrow\infty} n^{-1}\sum_{i=0}^{n-1}\bE  \mu(f_i) \mu(f_{i+k})=\lim_{j\rightarrow \infty} \bar{\bE}\mu(f_j) \mu(f_{j+k})$ exist. Therefore 
\beqn
V_k = \lim_{n\to\infty} \frac1n\sum_{i = 0}^{n-1} \bE v_{ik}= (2-\delta_{k0})\lim_{r\to\infty}    \bar{\bE} [\mu(f_r f_{r+k}) - \mu(f_r)\mu(f_{r+k})].
\eeqn
Now the rest of the claim follows from Theorem \ref{thm:var_limit}.
\end{proof}

\medskip
\noindent{\bf Standing Assumption (SA4).} For the rest of the paper we assume that $\bP$ is asymptotically mean stationary, and there exist $C_0>0$ and $\zeta>0$ such that
\beq\label{eq:AMSrate}
\sup_{r,k,a}\left|\frac1n\sum_{i=0}^{n-1}\int g_{rk}^a\circ \tau^i\,\rd\bP -\int g_{rk}^a\,\rd\bar\bP\right|\le C_0n^{-\zeta}
\eeq
for all $n\ge 1$.\hfill$\blacksquare$
\medskip

\begin{lem}\label{lem: n1 to n2}
For all integers $0<n_{1}< n_{2}$,
\beqn
\left|(n_{2}-n_{1})^{-1}\sum_{i=n_{1}}^{n_{2}-1}\bE g_{ik}^a- \lim_{r\rightarrow \infty} \bar{\bE} g_{rk}^a\right|\le C(\eta(n_{1})+(n_{2}-n_{1})^{-\zeta}),
\eeqn
where $C$ is uniform.
\end{lem}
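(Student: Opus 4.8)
The plan is to compare the finite Ces\`aro average directly with $\lim_{r\to\infty}\bar\bE g_{rk}^a$ by using the memory-loss property to freeze the lower index at $n_1$, and then invoking the stationarization rate (SA4). Write $m = n_2-n_1$ for the number of summands.

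First I would apply Lemma~\ref{lem:pre} with $r=n_1$: for every $i$ with $n_1\le i\le n_2-1$ one has $|g_{ik}^a - g_{n_1,k}^a\circ\tau^{i-n_1}|\le C\eta(n_1)$, hence $|\bE g_{ik}^a - \bE(g_{n_1,k}^a\circ\tau^{i-n_1})|\le C\eta(n_1)$. Averaging over the $m$ values of $i$ and reindexing by $j=i-n_1$ rewrites the target average, up to an error bounded by $C\eta(n_1)$, as $\frac1m\sum_{j=0}^{m-1}\bE(g_{n_1,k}^a\circ\tau^{j}) = \frac1m\sum_{j=0}^{m-1}\int g_{n_1,k}^a\circ\tau^{j}\,\rd\bP$.

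Second, I would apply (SA4), i.e.\ the rate bound~\eqref{eq:AMSrate}, with this fixed $r=n_1$ and $n=m$, obtaining $\bigl|\frac1m\sum_{j=0}^{m-1}\int g_{n_1,k}^a\circ\tau^{j}\,\rd\bP - \int g_{n_1,k}^a\,\rd\bar\bP\bigr|\le C_0 m^{-\zeta} = C_0(n_2-n_1)^{-\zeta}$, where $\int g_{n_1,k}^a\,\rd\bar\bP = \bar\bE g_{n_1,k}^a$. Third, I would close the gap between $\bar\bE g_{n_1,k}^a$ and the limit by means of~\eqref{eq:bar E conv}, which gives $|\bar\bE g_{n_1,k}^a - \lim_{r\to\infty}\bar\bE g_{rk}^a|\le C\eta(n_1)$. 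A single application of the triangle inequality to these three estimates yields the claimed bound $C(\eta(n_1)+(n_2-n_1)^{-\zeta})$.

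There is no serious obstacle here; the argument is a direct triangulation, and matching the number of averaged terms $m$ with the parameter $n$ in~\eqref{eq:AMSrate} is precisely what produces the $(n_2-n_1)^{-\zeta}$ rate. The only point requiring genuine care is the bookkeeping of uniformity of the constant $C$: each ingredient must supply a bound independent of $n_1$, $k$, and $a$. This holds because Lemma~\ref{lem:pre} is stated uniformly in all its indices, (SA4) contains the explicit $\sup_{r,k,a}$, and the constant in~\eqref{eq:bar E conv} is inherited from Lemma~\ref{lem:pre}; collecting these gives a single uniform $C$ as required.
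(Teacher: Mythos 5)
Your proposal is correct and follows exactly the paper's own three-step argument: freeze the index at $n_1$ via Lemma~\ref{lem:pre} (the paper's~\eqref{eq:ams1}), apply the rate bound~\eqref{eq:AMSrate} with $n = n_2-n_1$ (the paper's~\eqref{eq:ams2}), and close the gap to the limit via~\eqref{eq:bar E conv} (the paper's~\eqref{eq:ams3}), finishing with the triangle inequality. Your extra remarks on uniformity of the constants are consistent with how the paper's ingredients are stated and add nothing that conflicts with its proof.
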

\begin{proof}
By Lemma \ref{lem:pre} we have
\beq
\begin{split}
 & \left|(n_{2}-n_{1})^{-1}\sum_{i=n_{1}}^{n_{2}-1}\bE g_{ik}^{a}- (n_{2}-n_{1})^{-1}\sum_{i=n_{1}}^{n_{2}-1}\bE [g_{n_{1}k}^{a}\circ \tau^{i-n_{1}}] \right|
\\ 
 \le & (n_{2}-n_{1})^{-1}\sum_{i=n_{1}}^{n_{2}-1} C\eta(n_{1})= C\eta(n_{1}). \label{eq:ams1}
\end{split}
\eeq
By \eqref{eq:AMSrate}, it follows that
\beq
\begin{split}
\left| (n_{2}-n_{1})^{-1}\sum_{i=n_{1}}^{n_{2}-1}\bE [g_{n_{1}k}^{a}\circ \tau^{i-n_{1}}]-\bar{\bE}g_{n_{1}k}^{a} \right| & =\left| (n_{2}-n_{1})^{-1}\sum_{i=0}^{n_{2}-n_{1}-1}\bE [g_{n_{1}k}^{a}\circ \tau^{i}]-\bar{\bE}g_{n_{1}k}^{a} \right|
 \\
 & \le C_{0}(n_{2}-n_{1})^{-\zeta}\label{eq:ams2}
\end{split}
\eeq 
Finally \eqref{eq:bar E conv} gives
\beq
|\bar{\bE} g_{n_{1}k}^{a}-\lim_{r\rightarrow \infty} \bar{\bE} g_{rk}^{a}|\le C\eta(n_{1}). \label{eq:ams3}
\eeq
Now the claim follows from \eqref{eq:ams1}, \eqref{eq:ams2} and \eqref{eq:ams3}.
\end{proof}

Next we use Lemma \ref{lem: n1 to n2} to provide an upper bound on $\left|n^{-1}\sum_{i=0}^{n-1}\bE g_{ik}^{a}- \lim_{r\rightarrow \infty} \bar{\bE} g_{rk}^{a}\right|$. Note that just making the substitutions $n_{1}=0$ and $n_{2}=n$ in Lemma \ref{lem: n1 to n2} does not yield a good result. Instead we divide the sum $\sum_{i=0}^{n-1}\bE g_{ik}^{a}$ into an increasing number of partial sums and then apply Lemma \ref{lem: n1 to n2} separately to those parts.

Before proceeding to the next lemma, we define a function $h_{\zeta}\colon \bN \rightarrow \bR$ which depends on the parameter $\zeta$ in the following way
\beq\label{eq:hzeta}
h_{\zeta}(n) = 
\begin{cases}
n^{-1}, & \zeta > 1,\\
n^{-1}\log n, & \zeta = 1,\\
n^{-\zeta}, & 0<\zeta<1.
\end{cases}
\eeq

\begin{lem}\label{lem: split sum}
Suppose $\eta(n)=Cn^{-\psi}$, $\psi>1$. Then a uniform bound
\beqn
\left|\frac{1}{n}\sum_{i=0}^{n-1}\bE g_{ik}^a-\lim_{r\rightarrow \infty} \bar{\bE} g_{rk}^a\right|\le  C h_{\zeta}(n)
\eeqn
holds.
\end{lem}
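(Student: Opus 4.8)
The plan is to set $L=\lim_{r\to\infty}\bar{\bE}g_{rk}^a$ and rewrite the quantity to be estimated as
\[
\frac{1}{n}\sum_{i=0}^{n-1}\bE g_{ik}^a-L=\frac{1}{n}\sum_{i=0}^{n-1}\bigl(\bE g_{ik}^a-L\bigr).
\]
A single application of Lemma~\ref{lem: n1 to n2} to the whole range (say with $n_1=1$, $n_2=n$) is useless, because the error term $\eta(n_1)$ is then bounded below by the constant $\eta(1)$ and does not decay. The remedy, as anticipated in the text, is a dyadic decomposition: in a block that both \emph{starts} at scale $2^{p-1}$ and has \emph{length} of order $2^{p-1}$, the memory-loss factor $\eta(n_1)\approx(2^{p-1})^{-\psi}$ is small and balances against the block length.

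Concretely, I would put $P=\lfloor\log_2 n\rfloor$ and split $\{0,\dots,n-1\}$ into the singleton $\{0\}$, the dyadic blocks $\{2^{p-1},\dots,2^p-1\}$ for $p=1,\dots,P$, and the remainder $\{2^P,\dots,n-1\}$. The term $i=0$ contributes at most $\|f\|_\infty^2+|L|\le C$, hence $O(n^{-1})$ after division by $n$. For the $p$-th dyadic block I apply Lemma~\ref{lem: n1 to n2} with $n_1=2^{p-1}$, $n_2=2^p$; since $n_2-n_1=2^{p-1}$ and $\eta(2^{p-1})=C(2^{p-1})^{-\psi}$, this gives
\[
\Bigl|\sum_{i=2^{p-1}}^{2^p-1}\bigl(\bE g_{ik}^a-L\bigr)\Bigr|\le C\,(2^{p-1})^{1-\psi}+C\,(2^{p-1})^{1-\zeta}.
\]
The remainder block, with $n_1=2^P\ge n/2$ and $n_2-n_1<2^P$, is estimated in the same way and is of lower order in every case.

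It then remains to sum over $p$ and divide by $n$. The first family of terms sums to $\sum_{p\ge1}(2^{p-1})^{1-\psi}=O(1)$ because $\psi>1$, contributing $O(n^{-1})$. The second family, $\sum_{p=1}^{P}(2^{p-1})^{1-\zeta}$, is a geometric series whose behaviour splits into the three regimes of~\eqref{eq:hzeta}: for $\zeta>1$ it is $O(1)$; for $\zeta=1$ every term equals $1$, so the sum is $O(P)=O(\log n)$; and for $0<\zeta<1$ it is dominated by its last term $\approx n^{1-\zeta}$. Dividing by $n$ yields exactly $O(n^{-1})$, $O(n^{-1}\log n)$, and $O(n^{-\zeta})$, that is $O(h_\zeta(n))$, in the three cases. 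All constants are uniform in $k$, $a$ and $r$, because the constant in Lemma~\ref{lem: n1 to n2} and the coefficient in $\eta(n)=Cn^{-\psi}$ are.

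The main obstacle is choosing the decomposition so that the memory-loss error $\eta(n_1)$ and the averaging length $n_2-n_1$ are correctly balanced within each block; the dyadic choice is precisely what makes the first family summable and the second family controllably growing. The only delicate point beyond this is the borderline exponent $\zeta=1$, where the geometric sum degenerates and produces the logarithmic factor in $h_\zeta$; one must also check that the isolated term $i=0$ and the final incomplete block are of lower order, which is straightforward.
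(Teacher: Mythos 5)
Your proof is correct and is essentially identical to the paper's own argument: the same dyadic decomposition into the isolated term $i=0$, the blocks $\{2^{p-1},\dots,2^p-1\}$, and the incomplete final block starting at $2^{\lfloor\log_2 n\rfloor}$, with Lemma~\ref{lem: n1 to n2} applied blockwise and the resulting geometric series summed in the three regimes of $\zeta$ to produce $h_\zeta(n)$. The only differences are cosmetic (block indexing), plus one harmless imprecision: in the regime $0<\zeta<1$ the final block contributes at the \emph{same} order $n^{-\zeta}$ rather than a lower one, but it is still $O(h_\zeta(n))$, so the conclusion stands.
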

\begin{proof}
Denote $n^{*}=\lfloor\log_{2} n \rfloor$. We split the sum $\frac{1}{n}\sum_{i=0}^{n-1}\bE g_{ik}^{a}$ as
\beqn
\frac{1}{n}\sum_{i=0}^{n-1}\bE g_{ik}^{a}= \frac{1}{n}\bE g_{0k}^{a}+ \frac{1}{n}\sum_{j=0}^{n^{*}-1} \sum_{i=2^{j}}^{2^{j+1}-1}\bE g_{ik}^{a}+ \frac{1}{n}\sum_{i=2^{n^{*}}}^{n-1}\bE g_{ik}^{a}.
\eeqn
Obviously 
\beq
\left|\frac{1}{n}\bE g_{0k}^{a} - \frac{1}{n} \lim_{r\rightarrow \infty} \bar{\bE} g_{rk}^{a}\right|\le Cn^{-1}\label{eq:part1}
\eeq
Lemma \ref{lem: n1 to n2} yields 
\beqn
\left|\sum_{i=2^{j}}^{2^{j+1}-1}\bE g_{ik}^{a}- (2^{j+1}-2^{j})\lim_{r\rightarrow \infty} \bar{\bE} g_{rk}^{a}\right|\le 2^{j}C((2^{j})^{-\psi}+(2^{j+1}-2^{j})^{-\zeta})\le C( 2^{j(1-\psi)}+2^{j(1-\zeta)}).
\eeqn
Therefore
\beq\label{eq:part2}
\begin{split}
& \left|\frac{1}{n}\sum_{j=0}^{n^{*}-1} \sum_{i=2^{j}}^{2^{j+1}-1}\bE g_{ik}^{a} - \frac{1}{n}(2^{n^{*}}-1)\lim_{r\rightarrow \infty} \bar{\bE} g_{rk}^{a}\right|
\\
& = \frac{1}{n}\left| \sum_{j=0}^{n^{*}-1} \left(\sum_{i=2^{j}}^{2^{j+1}-1}\bE g_{ik}^{a}- (2^{j+1}-2^{j})\lim_{r\rightarrow \infty} \bar{\bE} g_{rk}^{a}\right)\right|
\\
& \le Cn^{-1} \sum_{j=0}^{n^{*}-1}( 2^{j(1-\psi)}+2^{j(1-\zeta)})\le C(n^{-1}+h_{\zeta}(n))\le C h_{\zeta}(n).
\end{split}
\eeq
Lemma \ref{lem: n1 to n2} also gives
\beq\label{eq:part3}
\begin{split}
& \left|\frac{1}{n}\sum_{i=2^{n^{*}}}^{n-1}\bE g_{ik}^{a}-\frac{1}{n}
(n-2^{n^{*}})\lim_{r\rightarrow \infty} \bar{\bE} g_{rk}^{a}\right|
\\
&= 
n^{-1}(n-2^{n^{*}})\left|(n-2^{n^{*}})^{-1}\sum_{i=2^{n^{*}}}
^{n-1}\bE g_{ik}^{a}-\lim_{r\rightarrow \infty} \bar{\bE} g_{rk}^{a}\right|
\\
& \le n^{-1}(n-2^{n^{*}})C((2^{n^{*}})^{-\psi}+(n-2^{n^{*}})^{-
\zeta})\le C(n^{-1}+h_{\zeta}(n))\le C h_{\zeta}(n).
\end{split}
\eeq
In the last line we used the fact that $n/2 \le 2^{n^{*}}\le n$, implying $n-2^{n^{*}}\le n/2$.
Collecting the estimates \eqref{eq:part1}, \eqref{eq:part2} and \eqref{eq:part3}, we deduce $|\frac{1}{n}\sum_{i=0}^{n-1}\bE g_{ik}^{a}-\lim_{r\rightarrow \infty} \bar{\bE} g_{rk}^{a}|\le  C h_{\zeta}(n)$.
\end{proof}

We are finally ready to state and prove the main result of this section:

\begin{thm}\label{thm: sigma_n^2 to sigma^2}
Assume (SA1) and (SA3) with $\eta(n)=Cn^{-\psi}$, $\psi>1$. Assume (SA4) with~$\zeta>0$. Then
\beqn
|\bE\sigma_n^{2} - \sigma^2| \le C 
\begin{cases}
n^{\frac{1}{\psi}-1}, & \zeta > 1,\\
(n\log^{-1} n)^{\frac{1}{\psi}-1}, & \zeta = 1,\\
n^{\frac{\zeta}{\psi}-\zeta}, & 0<\zeta<1.
\end{cases}
\eeqn
Here $\sigma^2$ is the quantity appearing in Theorem~\ref{thm:sigma2n conv}.
\end{thm}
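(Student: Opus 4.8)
The plan is to combine the general convergence estimate of Theorem~\ref{thm:var_limit} with the uniform rate furnished by Lemma~\ref{lem: split sum}, and then optimize the free cutoff parameter $K$.

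First I would invoke Theorem~\ref{thm:var_limit}, which for every $0<K\le n$ gives
\beqn
|\bE\sigma_n^{2} - \sigma^2| \le C\!\left(\left|\,\sum_{k = 0}^{K}\!\left(\frac1n\sum_{i = 0}^{n-1} \bE v_{ik} - V_k \right)\right| + \sum_{k=K+1}^\infty\eta(k) + Kn^{-1}\right).
\eeqn
To control the first term I would recall that $\bE v_{ik}=(2-\delta_{k0})(\bE g_{ik}^1-\bE g_{ik}^2)$ and $V_k=(2-\delta_{k0})(\lim_r\bar\bE g_{rk}^1-\lim_r\bar\bE g_{rk}^2)$, so that each summand splits into an $a=1$ and an $a=2$ contribution. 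Applying Lemma~\ref{lem: split sum} to both $a\in\{1,2\}$ bounds every summand by $Ch_\zeta(n)$ uniformly in $k$, whence the full partial sum is at most $C(K+1)h_\zeta(n)$. For the tail, the polynomial form $\eta(k)=Ck^{-\psi}$ with $\psi>1$ yields $\sum_{k=K+1}^\infty\eta(k)\le CK^{1-\psi}$.

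Combining these observations I arrive at
\beqn
|\bE\sigma_n^{2} - \sigma^2| \le C\bigl(K\,h_\zeta(n) + K^{1-\psi} + Kn^{-1}\bigr).
\eeqn
Since $h_\zeta(n)\ge n^{-1}$ in each of the three regimes of $\zeta$ (by~\eqref{eq:hzeta}), the term $Kn^{-1}$ is absorbed into $K h_\zeta(n)$, leaving the two competing quantities $K h_\zeta(n)$ and $K^{1-\psi}$. The final step is to balance them by taking $K$ of order $h_\zeta(n)^{-1/\psi}$; concretely I would set $K=\lceil h_\zeta(n)^{-1/\psi}\rceil$, which satisfies $1\le K\le n$ for all large $n$ in each case (the finitely many small $n$ being absorbed into $C$). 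With this choice both terms collapse to $h_\zeta(n)^{(\psi-1)/\psi}$, and substituting the definition of $h_\zeta$ produces exactly the three stated rates: $n^{1/\psi-1}$ when $\zeta>1$, $(n\log^{-1}n)^{1/\psi-1}$ when $\zeta=1$, and $n^{\zeta/\psi-\zeta}$ when $0<\zeta<1$.

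The only genuine subtlety—and the place I would be most careful—is this optimization: one must check that the balancing value of $K$ is admissible (a positive integer not exceeding $n$) in every regime, and, in the borderline case $\zeta=1$, track the logarithmic factor through the exponent $(\psi-1)/\psi$ to confirm that it reassembles into $(n\log^{-1}n)^{1/\psi-1}$ rather than leaving a spurious power of $\log n$. Everything else is a routine assembly of the two cited results.
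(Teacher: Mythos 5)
Your proposal is correct and takes essentially the same route as the paper's own proof: both combine Theorem~\ref{thm:var_limit} with the uniform rate $Ch_\zeta(n)$ from Lemma~\ref{lem: split sum} applied to $a\in\{1,2\}$ (the identification of $V_k$ being Theorem~\ref{thm:sigma2n conv}), arriving at a bound of the form $C\bigl(Kh_\zeta(n)+K^{1-\psi}+Kn^{-1}\bigr)$. The paper then performs exactly your optimization, writing the estimate as $CK\bigl(h_\zeta(n)+K^{-\psi}\bigr)$ and choosing $K\asymp h_\zeta(n)^{-1/\psi}$ in the same three regimes, so your argument matches it step for step.
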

\begin{proof}
Let $k\ge 0$. The previous lemma applied to case $a=1$ yields
\beq
\left|\frac{1}{n}\sum_{i=0}^{n-1}\bE[\mu(f_i f_{i+k})]-\lim_{r\rightarrow\infty}\bar{\bE}[\mu(f_{r}f_{r+k})]\right| \le  C h_{\zeta}(n).\label{eq:muff}
\eeq
Similarly in the case $a=2$
\beq
\left|\frac{1}{n}\sum_{i=0}^{n-1}\bE [\mu(f_i) (f_{i+k})]-\lim_{r\rightarrow\infty}\bar{\bE}[\mu(f_{r})\mu(f_{r+k})]\right|\le  C h_{\zeta}(n).\label{eq:mufmuf}
\eeq 
Equations \eqref{eq:muff}, \eqref{eq:mufmuf} and Theorem \ref{thm:sigma2n conv} imply that
\beqn
\begin{split}
& \left|V_k- \frac{1}{n}\sum_{i=0}^{n-1} (2-\zeta_{k0})\bE[\mu (f_i f_{i+k})-\mu(f_i)\mu(f_{i+k})]\right|
\\
 & \le  C\left|\lim_{r\rightarrow\infty} \bar{\bE}[ \mu(f_{r} f_{r+k})-\mu(f_r) \mu(f_{r+k})]- \frac{1}{n}\sum_{i=0}^{n-1} \bE[ \mu(f_i f_{i+k})-\mu(f_i)\mu(f_{i+k})]\right|
\\
& \le C h_{\zeta}(n).
\end{split}
\eeqn
We apply Theorem \ref{thm:var_limit}, which yields
\beq\label{eq:var_limit-rate}
\begin{split}
|\bE\sigma_n^{2} - \sigma^2| & \le C\!\left(\left|\,\sum_{k = 0}^{K}\! h_{\zeta}(n) \right| + \sum_{k=K+1}^\infty k^{-\psi} + Kn^{-1}\right)\le CK(h_{\zeta}(n)+K^{-\psi}),
\end{split}
\eeq
for all $0<K\le n$. The estimate on the right side of \eqref{eq:var_limit-rate} is minimized, when $h_{\zeta}(n)=K^{-\psi}$.
Therefore choosing
\beqn
K \asymp
\begin{cases}
n^{\frac{1}{\psi}}, & \zeta > 1,\\
(n\log^{-1} n)^{\frac{1}{\psi}}, & \zeta = 1,\\
n^{\frac{\zeta}{\psi}}, & 0<\zeta<1
\end{cases}
\eeqn
in \eqref{eq:var_limit-rate} completes the proof. 
\end{proof}

\section{Conclusions}\label{sec:main}
\subsection{Main result and consequences}
Theorems~\ref{thm: sigma_n^2} and~\ref{thm: sigma_n^2 to sigma^2} immediately yield the main result of the paper, given next. The bounds shown are elementary combinations of these theorems, so we leave the details to the reader. Let us remind the reader of the Standing Assumptions (SA1)--(SA4) in Sections~\ref{sec:conv of sigma_n^2} and~\ref{sec:conv of sigma_n^2 to sigma^2}. At the end of the section we also comment on the case of vector-valued observables.
\begin{thm}\label{thm:main}
Assume (SA1\&3) with $\eta(n)=Cn^{-\psi}$, $\psi>1$; (SA2) with $\alpha(n)=Cn^{-\gamma}$, $\gamma>0$; and (SA4) with~$\zeta>0$. Fix an arbitrarily small $\delta>0$.
Then there exists $\Omega_*\subset\Omega$, $\bP(\Omega_*)=1$, such that all of the following holds: The non-random number
\beqn
\sigma^2 = \sum_{k=0}^\infty (2-\delta_{k0})\lim_{i\to\infty}    \bar{\bE} [\mu(f_i f_{i+k}) - \mu(f_i)\mu(f_{i+k})]
\eeqn
is well defined, nonnegative, the series is absolutely convergent, and 
\beqn
\lim_{n\to\infty}\sigma_n^2(\omega) = \sigma^2
\eeqn
for every~$\omega\in\Omega_*$. Moreover, the absolute difference   
\beqn
\Delta_n(\omega) = |\sigma_{n}^{2}(\omega)-\sigma^{2}|
\eeqn
has the following upper bounds, for any $\omega\in\Omega_*$:
\beqn
\Delta_n = 
\begin{cases}
O(n^{-\frac 12} \log^{\frac32+\delta}n), & \zeta \ge 1, \quad\min\{\psi-1,\gamma\} > 1,
 \\
  O(n^{-\frac12+\delta}), & \zeta \ge 1,  \quad \min\{\psi-1,\gamma\} = 1,
  \\
  O(n^{-\frac{\min\{\psi-1,\gamma\}}{2}} \log^{\frac32+\delta}n), & \zeta \ge 1,  \quad 0<\min\{\psi-1,\gamma\}<1,
  \\
O(n^{\frac{\zeta}{\psi}-\zeta}+n^{-\frac 12} \log^{\frac32+\delta}n), &  0<\zeta<1, \quad\min\{\psi-1,\gamma\} > 1,
 \\
  O(n^{\frac{\zeta}{\psi}-\zeta}+ n^{-\frac12+\delta}), &  0<\zeta<1,  \quad \min\{\psi-1,\gamma\} = 1,
  \\
  O(n^{\frac{\zeta}{\psi}-\zeta}+ n^{-\frac{\min\{\psi-1,\gamma\}}{2}} \log^{\frac32+\delta}n), &  0<\zeta<1,  \quad 0<\min\{\psi-1,\gamma\}<1.
  \\
\end{cases}
\eeqn
\end{thm}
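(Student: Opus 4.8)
The plan is to combine the two component theorems through the elementary decomposition
\[
|\sigma_n^2(\omega) - \sigma^2| \le |\sigma_n^2(\omega) - \bE\sigma_n^2| + |\bE\sigma_n^2 - \sigma^2|
\]
already recorded in Section~\ref{sec:the problem}. The first summand is random and is handled almost surely by Theorem~\ref{thm: sigma_n^2}; the second is deterministic and is handled by Theorem~\ref{thm: sigma_n^2 to sigma^2}. Before combining, I would invoke Theorem~\ref{thm:sigma2n conv} (itself a corollary of Theorem~\ref{thm:var_limit}) to define
\[
\sigma^2 = \sum_{k=0}^\infty (2-\delta_{k0})\lim_{i\to\infty} \bar\bE[\mu(f_i f_{i+k}) - \mu(f_i)\mu(f_{i+k})],
\]
which is exactly the quantity named in the statement. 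That theorem supplies at once the existence of the inner limits, the absolute convergence of the series, its nonnegativity (via $\sigma^2 = \lim_n \bE\sigma_n^2 \ge 0$), and the convergence $\bE\sigma_n^2 \to \sigma^2$.

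Next I would fix the arbitrary $\delta>0$ once and for all and let $\Omega_*$ be the full-measure event on which the conclusion of Theorem~\ref{thm: sigma_n^2} holds for this $\delta$. Since that theorem is a single almost-sure statement for fixed $\delta$, we have $\bP(\Omega_*)=1$ and no countable intersection over a sequence of $\delta$'s is needed. On $\Omega_*$ the first summand obeys the three-case bound of Theorem~\ref{thm: sigma_n^2}, governed by $\min\{\psi-1,\gamma\}$, while the second summand obeys the $\zeta$-dependent bound of Theorem~\ref{thm: sigma_n^2 to sigma^2}, expressed through the exponents $\tfrac1\psi - 1$ (for $\zeta\ge 1$) and $\tfrac\zeta\psi - \zeta$ (for $0<\zeta<1$). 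Adding the two bounds and keeping the larger term in each regime produces the stated list; moreover $\lim_{n\to\infty}\sigma_n^2(\omega) = \sigma^2$ for every $\omega \in \Omega_*$ follows because both bounds tend to zero.

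The only genuine bookkeeping is to verify that when $\zeta \ge 1$ the deterministic rate is absorbed by the almost-sure rate, so that no extra term survives in the first three cases. Here I would use $\psi > 1$: when $\min\{\psi-1,\gamma\} > 1$ one has $\psi > 2$, hence $\tfrac1\psi - 1 < -\tfrac12$ and $n^{1/\psi-1}$ decays faster than $n^{-1/2}\log^{3/2+\delta}n$; when $\min\{\psi-1,\gamma\} = 1$ one has $\psi \ge 2$, so $\tfrac1\psi - 1 \le -\tfrac12$ and the term is dominated by $n^{-1/2+\delta}$; and when $0 < \min\{\psi-1,\gamma\} < 1$ a short comparison of exponents, splitting according to whether the minimum equals $\psi-1$ or $\gamma$ and using $\tfrac1\psi > \tfrac12$ when $1<\psi<2$, shows $-\min\{\psi-1,\gamma\}/2 > \tfrac1\psi - 1$, so again the almost-sure term dominates. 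For $0 < \zeta < 1$ the deterministic rate $n^{\zeta/\psi - \zeta}$ decays more slowly and need not be dominated, so both terms are retained, exactly as in the last three cases.

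The ``main obstacle,'' such as it is, is purely organizational rather than mathematical: assembling the six cases cleanly and confirming the dominance comparisons above. No new analytic input is required beyond the two cited theorems and the identification of $\sigma^2$ from Theorem~\ref{thm:sigma2n conv}, which is precisely why the combination can honestly be described as elementary.
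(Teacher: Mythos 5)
Your proposal is correct and follows exactly the route the paper intends: the paper itself states that Theorem~\ref{thm:main} is an immediate corollary of Theorems~\ref{thm: sigma_n^2} and~\ref{thm: sigma_n^2 to sigma^2} via the triangle-inequality decomposition $|\sigma_n^2(\omega)-\sigma^2|\le|\sigma_n^2(\omega)-\bE\sigma_n^2|+|\bE\sigma_n^2-\sigma^2|$, with $\sigma^2$ identified through Theorem~\ref{thm:sigma2n conv}, and leaves the case-by-case combination of bounds to the reader. Your dominance checks (including the observation that for $\zeta\ge 1$ the deterministic rate $n^{1/\psi-1}$, even with the logarithmic correction at $\zeta=1$, is strictly absorbed by the almost-sure rate, while for $0<\zeta<1$ both terms must be kept) supply precisely those omitted details correctly.
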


Let us reiterate that Theorem~\ref{thm:main} facilitates proving quenched central limit theorems with convergence rates for the fiberwise centered $\bar W_n$. Recalling the discussion from the beginning of the paper, we namely have the following trivial lemma (thus presented without proof):
\begin{lem}
Suppose~$d(\slot,\slot)$ is a distance of probability distributions with the following property: given~$b>0$, there exist an open neighborhood~$U\subset\bR_+$ of~$b$ and a constant~\mbox{$C>0$}, such that
\beq\label{eq:Gaussian_distance}
d(a Z,b Z) \le C|a - b|
\eeq
for all $a\in U$. Here $Z\sim\cN(0,1)$. If $\sigma^2>0$, then for every $\omega\in\Omega_*$,
\beqn
d(\bar W_n,\sigma Z) \le d(\bar W_n,\sigma_n Z) + O(\Delta_n).
\eeqn

\end{lem}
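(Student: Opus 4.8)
The plan is to reduce everything to the triangle inequality for the metric $d$ together with the local Lipschitz hypothesis \eqref{eq:Gaussian_distance}. Since $d$ is by assumption a genuine distance on probability distributions, the triangle inequality gives
\beqn
d(\bar W_n,\sigma Z) \le d(\bar W_n,\sigma_n Z) + d(\sigma_n Z,\sigma Z),
\eeqn
so it suffices to show $d(\sigma_n Z,\sigma Z) = O(\Delta_n)$ for every fixed $\omega\in\Omega_*$.

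First I would apply \eqref{eq:Gaussian_distance} with $b=\sigma$: since $\sigma^2>0$ we have $\sigma>0$, and the hypothesis furnishes an open neighborhood $U$ of $\sigma$ and a constant $C>0$ with $d(aZ,\sigma Z)\le C|a-\sigma|$ for all $a\in U$. The point is that $\sigma_n$ eventually lies in $U$: by Theorem~\ref{thm:main}, $\lim_{n\to\infty}\sigma_n^2(\omega)=\sigma^2$ for every $\omega\in\Omega_*$, hence $\sigma_n(\omega)\to\sigma$ (using $\sigma_n\ge 0$ and continuity of the square root at $\sigma^2>0$), so there is an index $n_0=n_0(\omega)$ with $\sigma_n\in U$ for all $n\ge n_0$. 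For such $n$ we obtain $d(\sigma_n Z,\sigma Z)\le C|\sigma_n-\sigma|$.

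It remains to convert the linear deviation $|\sigma_n-\sigma|$ into the squared deviation $\Delta_n=|\sigma_n^2-\sigma^2|$. Because $\sigma_n\ge 0$ (it is the nonnegative square root of the variance $\Var_\mu\bar W_n$) and $\sigma>0$, we have the elementary identity
\beqn
|\sigma_n-\sigma| = \frac{|\sigma_n^2-\sigma^2|}{\sigma_n+\sigma} \le \frac{\Delta_n}{\sigma},
\eeqn
valid for all $n$. Combining this identity with the bound $d(\sigma_n Z,\sigma Z)\le C|\sigma_n-\sigma|$ from the previous step yields $d(\sigma_n Z,\sigma Z)\le (C/\sigma)\Delta_n$ for all $n\ge n_0$, which is the desired $O(\Delta_n)$ estimate, the implied constant and the threshold $n_0$ depending on $\omega$ only through $\sigma_n(\omega)$. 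Feeding this back into the triangle inequality completes the argument.

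There is essentially no serious obstacle here, which is why the statement is flagged as trivial. The only points requiring a word of care are (i) verifying that $\sigma_n$ enters the neighborhood $U$, which is exactly where the almost-sure convergence $\sigma_n^2\to\sigma^2$ from Theorem~\ref{thm:main} is used, and which explains why one restricts to $\omega\in\Omega_*$ and assumes $\sigma^2>0$; and (ii) the passage from $\Delta_n$ to $|\sigma_n-\sigma|$, which is harmless precisely because the denominator $\sigma_n+\sigma$ stays bounded away from zero once $\sigma>0$.
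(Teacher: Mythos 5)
Your proof is correct and is exactly the argument the paper has in mind: the paper states this lemma without proof, calling it trivial, and the intended reasoning is precisely the one sketched in Section~\ref{sec:the problem} (triangle inequality plus the local Lipschitz bound $d(\sigma_n Z,\sigma Z)\le C|\sigma_n-\sigma|$), which you complete correctly by using Theorem~\ref{thm:main} to get $\sigma_n(\omega)\to\sigma$ on $\Omega_*$ and the factorization $|\sigma_n-\sigma|=\Delta_n/(\sigma_n+\sigma)\le\Delta_n/\sigma$.
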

In other words, once a bound on the first term on the right side has been established (e.g., using methods cited earlier), one can use Theorem~\ref{thm:main} to bound the second term almost surely. Typical metrics satisfying~\eqref{eq:Gaussian_distance} are the 1-Lipschitz (Wasserstein) and Kolmogorov distances. 

The results presented above allow to formulate some sufficient conditions for $\sigma^2 > 0$. For simplicity, we proceed in the ideal parameter regime
\beq\label{eq:best_parameters}
\min\{\psi-1,\gamma,\zeta\}>1.
\eeq
Generalizations of the next result involving any of the other parameter regimes of Theorem~\ref{thm:main} are straightforward, and left to the reader.
\begin{cor}\label{cor:nonzero_variance}
Let~\eqref{eq:best_parameters} hold with all the other assumptions of Theorem~\ref{thm:main}. Suppose that either (i) there exists $\omega\in\Omega_*$ such that
\beqn
\sup_{n\ge 2}\frac{\Var_\mu(S_n)}{n^{\frac 12} \log^{\frac32+\delta}n} = \infty
\eeqn
or (ii) and
\beqn
\sup_{n\ge 1}\frac{\bE\Var_\mu(S_n)}{n^{\frac1{\psi}}} = \infty.
\eeqn
Then $\sigma^2 > 0$.
\end{cor}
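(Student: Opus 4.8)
The plan is to argue by contraposition in each of the two cases, exploiting the elementary identity $\sigma_n^2 = \frac1n\Var_\mu(S_n)$, which holds because $\bar W_n = W_n - \mu(W_n)$ has the same $\mu$-variance as $W_n = S_n/\sqrt n$. Taking $\bP$-expectations gives the companion identity $\bE\sigma_n^2 = \frac1n\bE\Var_\mu(S_n)$. Since we already know $\sigma^2\ge 0$ from Theorem~\ref{thm:sigma2n conv}, it suffices to rule out $\sigma^2 = 0$, and I will show that $\sigma^2 = 0$ forces the relevant supremum to be finite in each case, contradicting the hypothesis. Throughout, I note that the finitely many small-$n$ terms of each quotient are automatically finite, since by (SA1) the function $f$ is bounded and $\mu$ is a probability measure, so $\Var_\mu(S_n)$ and $\bE\Var_\mu(S_n)$ are finite for every fixed $n$; hence it is enough to control the quotients for large $n$.

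First I would treat case (i). In the ideal regime~\eqref{eq:best_parameters} we have $\zeta\ge 1$ and $\min\{\psi-1,\gamma\}>1$, so the first line of Theorem~\ref{thm:main} applies and yields, for every $\omega\in\Omega_*$, the almost sure rate $\Delta_n(\omega) = |\sigma_n^2(\omega) - \sigma^2| = O(n^{-\frac12}\log^{\frac32+\delta}n)$. Assuming $\sigma^2 = 0$ and using $\sigma_n^2\ge 0$, this becomes $\sigma_n^2(\omega) = O(n^{-\frac12}\log^{\frac32+\delta}n)$. Multiplying by $n$ and invoking the identity above gives $\Var_\mu(S_n) = n\sigma_n^2(\omega) = O(n^{\frac12}\log^{\frac32+\delta}n)$, so the quotient $\Var_\mu(S_n)/(n^{\frac12}\log^{\frac32+\delta}n)$ is bounded in $n$ for the given $\omega$. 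This contradicts hypothesis (i) and forces $\sigma^2 > 0$.

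Next I would treat case (ii) in the same spirit, but using the non-random bound on $\bE\sigma_n^2$ rather than the almost sure bound on $\sigma_n^2$. In the ideal regime $\zeta>1$, so Theorem~\ref{thm: sigma_n^2 to sigma^2} gives $|\bE\sigma_n^2 - \sigma^2|\le Cn^{\frac1\psi - 1}$. Assuming $\sigma^2 = 0$ and using $\bE\sigma_n^2\ge 0$, this reads $\bE\sigma_n^2 = O(n^{\frac1\psi-1})$, whence $\bE\Var_\mu(S_n) = n\,\bE\sigma_n^2 = O(n^{\frac1\psi})$ by the companion identity. Thus $\bE\Var_\mu(S_n)/n^{\frac1\psi}$ is bounded in $n$, contradicting hypothesis (ii), and again $\sigma^2>0$.

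There is no substantial obstacle here; the only point requiring care is bookkeeping---making sure that the regime~\eqref{eq:best_parameters} selects exactly the branch of Theorem~\ref{thm:main} (for (i)) and of Theorem~\ref{thm: sigma_n^2 to sigma^2} (for (ii)) producing the stated exponents $n^{\frac12}\log^{\frac32+\delta}n$ and $n^{\frac1\psi}$, and that each supremum hypothesis is, up to a multiplicative constant, exactly the negation of the boundedness we derive. The underlying mechanism is simply that a vanishing limiting variance would force $\Var_\mu(S_n)$ (respectively its $\bP$-mean) to grow strictly more slowly than the threshold rates, which is precisely what (i) and (ii) forbid.
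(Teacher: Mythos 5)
Your proposal is correct and follows essentially the same route as the paper's own proof: assume $\sigma^2=0$, use the identity $\sigma_n^2 = n^{-1}\Var_\mu(S_n)$ together with Theorem~\ref{thm:main} (for part~(i)) and Theorem~\ref{thm: sigma_n^2 to sigma^2} with $\zeta>1$ (for part~(ii)) to bound the relevant quotient, and contradict the supremum hypothesis. Your extra remark about the finitely many small-$n$ terms is harmless bookkeeping that the paper leaves implicit.
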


\begin{proof}
Suppose $\sigma^2 = 0$. We will derive a contradiction in each case.

\noindent (i) Let $\omega\in\Omega_*$ be arbitrary. By Theorem~\ref{thm:main}, there exists $C>0$ such that $n^{-1}\Var_\mu(S_n) = \sigma_n^2 \le C n^{-\frac 12} \log^{\frac32+\delta}n$ for all $n\ge 1$. This violates the assumption of part~(i), so $\sigma^2>0$.

\noindent (ii) By Theorem~\ref{thm: sigma_n^2 to sigma^2}, there exists $C>0$ such that $n^{-1}\bE\Var_\mu(S_n) = \bE\sigma_n^2 \le Cn^{\frac{1}{\psi}-1}$ for all $n\ge 1$. This violates the assumption of part~(ii), so $\sigma^2>0$.
\end{proof}

We will return to the question of whether $\sigma^2 = 0$ or $\sigma^2>0$ in Lemma~\ref{lem:zerovariance}.

\subsection{Vector-valued observables}
Let us conclude by explaining, as promised, how the results extend with ease to the case of a vector-valued observable $f:X\to\bR^d$. This time $\sigma_n^2$ is a $d\times d$ covariance matrix and, if the limit exists, so is $\sigma^2 = \lim_{n\to\infty}\sigma_n^2$. Define the functions $\ell_n:\bR^d\to\bR$ by
\beqn
\ell_n(v) = v^T\sigma_n^2 v,
\eeqn
and denote the standard basis vectors of~$\bR^d$ by $e_\alpha$, $\alpha = 1,\dots,d$.  Observe that $\ell_n(v)$ is the $\mu$-variance of $W_n$ with the scalar-valued observable $v^T f$ in place of $f$.

\begin{lem}
Suppose there exists~$\kappa>0$ such that, almost surely, the limit $\ell(e_\alpha+e_\beta) = \lim_{n\to\infty}\ell_n(e_\alpha+e_\beta)$ exists and
\beqn
\ell(e_\alpha+e_\beta) - \ell_n(e_\alpha+e_\beta) = O(n^{-\kappa})
\eeqn
as $n\to\infty$ for all $\alpha,\beta=1,\dots,d$. Then, almost surely, $\sigma^2 = \lim_{n\to\infty}\sigma_n^2$ exists and
\beqn
|\sigma^2 - \sigma_n^2| = O(n^{-\kappa})
\eeqn
for all matrix norms.
\end{lem}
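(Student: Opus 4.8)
The plan is to recover the entries of the matrix $\sigma_n^2$ from the scalar quadratic forms $\ell_n$ by polarization, to propagate the given convergence rate to each entry, and then to pass to an arbitrary matrix norm using equivalence of norms on the finite-dimensional space of $d\times d$ matrices.

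First I would note that $\sigma_n^2$, being a covariance matrix, is symmetric, so $(\sigma_n^2)_{\alpha\beta}=(\sigma_n^2)_{\beta\alpha}$ and $\ell_n(v)=v^T\sigma_n^2 v$ is the associated quadratic form. Expanding at $e_\alpha+e_\beta$ gives the polarization identity
\[
\ell_n(e_\alpha+e_\beta)=(\sigma_n^2)_{\alpha\alpha}+(\sigma_n^2)_{\beta\beta}+2(\sigma_n^2)_{\alpha\beta}.
\]
Specializing to $\alpha=\beta$ yields $\ell_n(e_\alpha+e_\alpha)=4(\sigma_n^2)_{\alpha\alpha}$, so the hypothesis applied to the pair $(\alpha,\alpha)$ already shows that each diagonal entry converges, with $(\sigma_n^2)_{\alpha\alpha}=\tfrac14\ell_n(e_\alpha+e_\alpha)$ having limit $(\sigma^2)_{\alpha\alpha}:=\tfrac14\ell(e_\alpha+e_\alpha)$ and satisfying $|(\sigma^2)_{\alpha\alpha}-(\sigma_n^2)_{\alpha\alpha}|=O(n^{-\kappa})$.

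Next I would solve the polarization identity for the off-diagonal entries,
\[
(\sigma_n^2)_{\alpha\beta}=\tfrac12\bigl(\ell_n(e_\alpha+e_\beta)-(\sigma_n^2)_{\alpha\alpha}-(\sigma_n^2)_{\beta\beta}\bigr),
\]
and observe that all three terms on the right converge at rate $O(n^{-\kappa})$: the first by hypothesis, the other two by the previous step. Hence $(\sigma_n^2)_{\alpha\beta}$ converges to a limit $(\sigma^2)_{\alpha\beta}$ with $|(\sigma^2)_{\alpha\beta}-(\sigma_n^2)_{\alpha\beta}|=O(n^{-\kappa})$. Since there are only finitely many index pairs, the intersection of the corresponding full-measure sets is again a full-measure set on which every entry converges at this rate simultaneously.

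Finally, on that full-measure set I would assemble the entrywise limits into the symmetric matrix $\sigma^2=((\sigma^2)_{\alpha\beta})$. Entrywise convergence at rate $O(n^{-\kappa})$ immediately gives $\max_{\alpha,\beta}|(\sigma^2)_{\alpha\beta}-(\sigma_n^2)_{\alpha\beta}|=O(n^{-\kappa})$, and since all norms on the finite-dimensional space of $d\times d$ matrices are equivalent, the bound $|\sigma^2-\sigma_n^2|=O(n^{-\kappa})$ holds for every matrix norm. I expect no genuine obstacle here; the only two points deserving a moment's care are that polarization recovers the \emph{full} matrix precisely because $\sigma_n^2$ is symmetric, and that the almost-sure conclusion survives a finite intersection over the index pairs $(\alpha,\beta)$.
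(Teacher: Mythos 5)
Your proof is correct and follows essentially the same route as the paper's: polarization to recover the entries of $\sigma_n^2$ from the quadratic forms $\ell_n(e_\alpha+e_\beta)$, finiteness of the index set to preserve the almost-sure statement, and equivalence of norms on the space of $d\times d$ matrices. The only difference is cosmetic: you make explicit the diagonal case $\ell_n(e_\alpha+e_\alpha)=4(\sigma_n^2)_{\alpha\alpha}$, which the paper leaves implicit when it writes $(\sigma_n^2)_{\alpha\beta}=\tfrac12\bigl(\ell_n(e_\alpha+e_\beta)-\ell_n(e_\alpha)-\ell_n(e_\beta)\bigr)$.
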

\begin{proof}
Note that the matrix elements of~$\sigma_n^2$ are given by
\beqn
(\sigma_n^2)_{\alpha\beta} = \tfrac12(\ell_n(e_\alpha+e_\beta) - \ell_n(e_\alpha) - \ell_n(e_\beta)).
\eeqn
Dropping the subindex $n$ yields the limit matrix elements~$\sigma^2_{\alpha\beta}$. Since~$\alpha$ and~$\beta$ can take only finitely many values, simultaneous almost sure convergence for the matrix elements with the claimed rate follows.
\end{proof}

According to the lemma, the rate of convergence of the covariance matrix~$\sigma_n^2$ to~$\sigma^2$ can be established by applying the earlier results to the finite family of scalar-valued observables $(e_\alpha + e_\beta)^T f$. Further, one may apply Corollary~\ref{cor:nonzero_variance} (or Lemma~\ref{lem:zerovariance}) to the observables $v^T f$ for all unit vectors~$v$ to obtain conditions for~$\sigma^2$ being positive definite. Assuming now it is, for certain metrics (e.g.\@ 1-Lipschitz) one has
\beqn
d(\sigma_n Z,\sigma Z) \le C|\sigma^2-\sigma_n^2|
\eeqn
where $Z\sim\cN(0,I_{d\times d})$ and $C=C(\sigma)$, which again yields an estimate of the type
\beqn
d(\bar W_n,\sigma Z) \le d(\bar W_n,\sigma_n Z) + C|\sigma^2-\sigma_n^2|.
\eeqn
We refer the reader to Hella~\cite{Hella_2018} for details, including the hard part of establishing an almost sure, asymptotically decaying bound on~$d(\bar W_n,\sigma_n Z)$ in the vector-valued case.


\appendix

\section{Random dynamical systems}\label{sec:RDS}
In this section we interpret the limit variance of Theorems~\ref{thm:sigma2n conv} and~\ref{thm: sigma_n^2 to sigma^2} from the point of view of random dynamical systems. Like elsewhere in the paper, we will assume the system possesses the good, uniform, fiberwise properties of the Standing Assumptions.

Recall that $\tau$ preserves the probability measure~$\bar\bP$ in~\eqref{eq:AMS}, i.e., $\tau^{-1} F\in\cF$ and $\bar\bP(\tau^{-1} F) = \bar\bP(F)$ for all $F\in\cF$. One says that $\varphi(\slot,\slot,\slot)$ in~\eqref{eq:varphi} is a measurable random dynamical system (RDS) on the measurable space $(X,\cB)$ over the measure-preserving transformation $(\Omega,\cF,\bar\bP,\tau)$. The map
\beqn
\Phi:\Omega\times X\to \Omega\times X:\Phi(\omega,x) = (\tau\omega,\varphi(1,\omega)x) = (\tau\omega, T_{\omega_1}(x))
\eeqn
is called the skew product of the measure-preserving transformation $(\Omega,\cF,\bar\bP,\tau)$ and the cocycle $\varphi(n,\omega)$ on~$X$. It is a measurable self-map on $(\Omega\times X,\cF\otimes\cB)$. In general, random dynamical systems and skew products have one-to-one correspondence; in particular, the measurability of one implies the measurability of the other.

We are interested in the skew product, because of the identity
\beqn
\Phi^n(\omega,x) = (\tau^n\omega,\varphi(n,\omega)x) = (\tau^n\omega,T_{\omega_n}\circ\dots\circ T_{\omega_1}(x)).
\eeqn
Thus, our task is to study the statistics of the projection of $\Phi^n(\omega,x)$ to~$X$. It now becomes interesting to study the invariant measures of~$\Phi$. However, the class of all invariant measures of~$\Phi$ is unnatural, for we must incorporate the fact that $\tau$ preserves the measure~$\bar\bP$. For this reason, it is said that a probability measure~$\bfP$ on~$\cF\otimes\cB$ is an invariant measure for the RDS~$\varphi$ if it is invariant for $\Phi$ \emph{and} the marginal of $\bfP$ on $\Omega$ coincides with $\bar\bP$. In other words,
\beqn
\Phi_*\bfP = \bfP
\quad\text{and}\quad
(\Pi_1)_*\bfP = \bar\bP,
\eeqn
where $\Pi_1:\Omega\times X\to X:(\omega,x)\mapsto \omega$.

We will also need to consider the cocycle
\beqn
\varphi^{(2)}(n,\omega)(x,y) = (\varphi(n,\omega)x,\varphi(n,\omega)y)
\eeqn
on the product space $X\times X$. The corresponding skew product is
\beqn
\Phi^{(2)}(\omega,x,y) = (\tau\omega,\varphi^{(2)}(1,\omega)(x,y)). 
\eeqn
The invariant measures of the RDS~$\varphi^{(2)}$ are defined analogously to above. Without danger of confusion, we define the projections $\Pi_1(\omega,x,y) = \omega$, $\Pi_2(\omega,x,y) = x$ and $\Pi_3(\omega,x,y) = y$ on $\Omega\times X\times X$. We also write $(\Pi_1\times \Pi_2)(\omega,x,y) = (\omega,x)$ and $(\Pi_1\times \Pi_3)(\omega,x,y) = (\omega,y)$.

Of particular interest will be the sequence of functions $Z_n:\Omega\times X\times X$ defined by
\beqn
Z_n(\omega,x,y) = S_n(\omega,x)-S_n(\omega,y).
\eeqn
For then
\beq\label{eq:varZ}
\begin{split}
& \int Z_n^2(\omega)\,\rd(\mu\otimes\mu) 
= 2 \Var_\mu(S_n(\omega)) = 2\sigma_n^2(\omega)\cdot n.
\end{split}
\eeq
Notice already that writing
\beqn
F(\omega,x,y) = f(x) - f(y)
\eeqn
yields the identity
\beq\label{eq:Zn_sum}
Z_n = \sum_{i=0}^{n-1} F\circ(\Phi^{(2)})^i.
\eeq

\medskip
\noindent{\bf Standing Assumption (SA5).} 
Assume there exists an invariant measure~$\bfP^{(2)}$ for the RDS $\varphi^{(2)}$ that is symmetric in the sense that
\beq\label{eq:P^2_symmetry}
\int h(\omega,x,y)\,\rd\bfP^{(2)}(\omega,x,y) = \int h(\omega,y,x)\,\rd\bfP^{(2)}(\omega,x,y)
\eeq
for all bounded measurable $h:\Omega\times X\times X\to\bR$. 
The common marginal
\beq\label{eq:marginals_P}
\bfP = (\Pi_1\times\Pi_2)_* \bfP^{(2)} = (\Pi_1\times\Pi_3)_* \bfP^{(2)}
\eeq
is then trivially an invariant measure for the RDS $\varphi$. Moreover, assume
\beq\label{eq:SA5_lim1}
\lim_{i\to\infty}    \bar{\bE} [\mu(f_i)] = \int f(x)\,\rd\bfP(\omega,x),
\eeq
\beq\label{eq:SA5_lim2}
\lim_{i\to\infty}    \bar{\bE} [\mu(f_i f_{i+k})] = \int f(x)\,f(\varphi(k,\omega,x))\,\rd\bfP(\omega,x)
\eeq
and
\beq\label{eq:SA5_lim3}
\lim_{i\to\infty}    \bar{\bE} [\mu(f_i)\mu(f_{i+k})] = \int f(x)\, f(\varphi(k,\omega,y))\,\rd\bfP^{(2)}(\omega,x,y)
\eeq
are satisfied.\hfill$\blacksquare$
\medskip

While Standing Assumption (SA5) may, from the point of view of the initial setup of our problem, seem mysterious at a first glance, it is quite natural. We will later provide an example of a more concrete condition which implies (SA5), and stick to the abstract setting for now.

The following lemma lists useful properties of $F$ in view of~(SA5).

\begin{lem}\label{lem:F_properties}
The function $F$ satisfies
\beqn
\int F\,\rd\bfP^{(2)} = 0
\eeqn
and
\beq\label{eq:Fcov}
\begin{split}
\int F\cdot F\circ(\Phi^{(2)})^i\,\rd\bfP^{(2)} = 2\int f(x)f(\varphi(i,\omega,x)) - f(x)f(\varphi(i,\omega,y))\,\rd\bfP^{(2)}(\omega,x,y).
\end{split}
\eeq
The latter has the upper bound
\beq\label{eq:Fcov_bound}
\left|\int F\cdot F\circ(\Phi^{(2)})^i\,\rd\bfP^{(2)}\right| \le 2\eta(i).
\eeq
\end{lem}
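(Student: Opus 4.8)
The plan is to treat the three claims in sequence, exploiting the symmetry~\eqref{eq:P^2_symmetry} throughout and invoking the limit identities of~(SA5) only for the final bound.

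First, for the vanishing of the mean, I would apply~\eqref{eq:P^2_symmetry} to $h = F$. Since $F(\omega,y,x) = f(y) - f(x) = -F(\omega,x,y)$, the symmetry forces $\int F\,\rd\bfP^{(2)} = \int F(\omega,y,x)\,\rd\bfP^{(2)} = -\int F\,\rd\bfP^{(2)}$, whence $\int F\,\rd\bfP^{(2)} = 0$.

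Next, for the covariance identity~\eqref{eq:Fcov}, I would first record that the iterated skew product acts as $(\Phi^{(2)})^i(\omega,x,y) = (\tau^i\omega,\varphi(i,\omega)x,\varphi(i,\omega)y)$ by the cocycle property; since $F$ does not depend on the $\Omega$-coordinate, the $\tau^i\omega$ component is irrelevant and $F\circ(\Phi^{(2)})^i(\omega,x,y) = f(\varphi(i,\omega,x)) - f(\varphi(i,\omega,y))$. Expanding the product $F\cdot F\circ(\Phi^{(2)})^i$ produces four terms. Applying~\eqref{eq:P^2_symmetry} (the $x\leftrightarrow y$ swap) identifies, after integration, the term $f(x)f(\varphi(i,\omega,x))$ with $f(y)f(\varphi(i,\omega,y))$ and the term $f(x)f(\varphi(i,\omega,y))$ with $f(y)f(\varphi(i,\omega,x))$, collapsing the four contributions into the two displayed in~\eqref{eq:Fcov}, each carrying a factor of~$2$.

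Finally, for the bound~\eqref{eq:Fcov_bound}, I would pass from the abstract integral to the fiberwise correlation quantities of~(SA1). By the marginal identities~\eqref{eq:marginals_P}, the first summand $\int f(x)f(\varphi(i,\omega,x))\,\rd\bfP^{(2)}$ depends only on $(\omega,x)$ and equals $\int f(x)f(\varphi(i,\omega,x))\,\rd\bfP$, which by~\eqref{eq:SA5_lim2} equals $\lim_{j\to\infty}\bar\bE[\mu(f_j f_{j+i})]$; likewise the second summand equals $\lim_{j\to\infty}\bar\bE[\mu(f_j)\mu(f_{j+i})]$ by~\eqref{eq:SA5_lim3}. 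Hence the right side of~\eqref{eq:Fcov} equals $2\lim_{j\to\infty}\bar\bE[\mu(\bar f_j\bar f_{j+i})]$, using $\mu(\bar f_j\bar f_{j+i}) = \mu(f_j f_{j+i}) - \mu(f_j)\mu(f_{j+i})$. Since (SA1) gives $|\mu(\bar f_j\bar f_{j+i})| \le \eta(i)$ almost surely, integrating against the probability measure $\bar\bP$ and passing to the limit yields the bound $2\eta(i)$. I expect the main obstacle to be precisely this last step: correctly bridging the skew-product integral and the fiberwise correlation bound through the three limit identities of~(SA5), and verifying that the almost-sure correlation estimate survives both integration against $\bar\bP$ and the limit $j\to\infty$.
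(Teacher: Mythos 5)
Your proof is correct and follows essentially the same route as the paper: symmetry~\eqref{eq:P^2_symmetry} gives both the centering of $F$ and the collapse of the four-term expansion into~\eqref{eq:Fcov}, and then~\eqref{eq:marginals_P}, \eqref{eq:SA5_lim2}, \eqref{eq:SA5_lim3} and (SA1) give the bound $2\eta(i)$. In fact you spell out in full the steps that the paper's three-sentence proof only sketches, so nothing is missing.
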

\begin{proof}
That~$F$ is centered is due to the symmetry property~\eqref{eq:P^2_symmetry} of~$\bfP^{(2)}$ in~(SA5). Since
\beqn
(F\cdot F\circ(\Phi^{(2)})^i)(\omega,x,y) = \{f(x)-f(y)\}\{f(\varphi(i,\omega,x)) - f(\varphi(i,\omega,y))\},
\eeqn
the same symmetry property also yields~\eqref{eq:Fcov}. The upper bound in~\eqref{eq:Fcov_bound} then follows from~\eqref{eq:SA5_lim2} and~\eqref{eq:SA5_lim3} in~(SA5) together with~(SA1).
\end{proof}

Recall that in Theorems~\ref{thm:sigma2n conv}, \ref{thm: sigma_n^2 to sigma^2} and~\ref{thm:main} we have
\beqn
\sigma^2 = \lim_{n\to\infty}\bE\sigma^2_n = \sum_{k=0}^\infty (2-\delta_{k0})\lim_{i\to\infty}    \bar{\bE} [\mu(f_i f_{i+k}) - \mu(f_i)\mu(f_{i+k})].
\eeqn
The next lemma connects this expression to the RDS notions when also (SA5) is assumed.

\begin{lem}
The limit variance $\sigma^2$ in Theorems~\ref{thm:sigma2n conv}, \ref{thm: sigma_n^2 to sigma^2} and~\ref{thm:main} satisfies
\begin{align}\label{eq:var_1}
\sigma^2 & = \sum_{k=0}^\infty (2-\delta_{k0}) \int f(x)f(\varphi(k,\omega,x)) - f(x)f(\varphi(k,\omega,y))\,\rd\bfP^{(2)}(\omega,x,y)
\\
& = \frac12 \sum_{k=0}^\infty(2-\delta_{k0})\int F\cdot F\circ(\Phi^{(2)})^k\,\rd\bfP^{(2)} 
\label{eq:var_2}
\\
& = \frac12 \lim_{n\to\infty} \frac1n \int Z_n^2\,\rd\bfP^{(2)} = \frac12 \lim_{n\to\infty} \Var_{\bfP^{(2)}}\!\left(\frac{Z_n}{\sqrt n}\right) .
\label{eq:var_3}
\end{align}
\end{lem}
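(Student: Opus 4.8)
The plan is to chain the three displayed expressions together, starting from the formula
\beqn
\sigma^2 = \sum_{k=0}^\infty (2-\delta_{k0})\lim_{i\to\infty} \bar{\bE} [\mu(f_i f_{i+k}) - \mu(f_i)\mu(f_{i+k})]
\eeqn
established in Theorem~\ref{thm:sigma2n conv}. First I would prove~\eqref{eq:var_1} by invoking the two limit identities~\eqref{eq:SA5_lim2} and~\eqref{eq:SA5_lim3} of~(SA5) term by term, for each fixed~$k$. The only subtlety is that~\eqref{eq:SA5_lim2} expresses the first limit as an integral against the marginal~$\bfP$, whereas~\eqref{eq:SA5_lim3} uses~$\bfP^{(2)}$; here I would use the marginal identity~\eqref{eq:marginals_P}, namely $\bfP = (\Pi_1\times\Pi_2)_* \bfP^{(2)}$, to rewrite $\int f(x)f(\varphi(k,\omega,x))\,\rd\bfP$ as the corresponding integral against~$\bfP^{(2)}$, so that both terms live on the same space and combine into the single integrand appearing in~\eqref{eq:var_1}.

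Next, the passage from~\eqref{eq:var_1} to~\eqref{eq:var_2} is immediate from Lemma~\ref{lem:F_properties}: identity~\eqref{eq:Fcov} says precisely that $\int F\cdot F\circ(\Phi^{(2)})^k\,\rd\bfP^{(2)}$ equals twice the integrand summed in~\eqref{eq:var_1}, so substituting produces the factor~$\tfrac12$ and yields~\eqref{eq:var_2} directly.

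The substantive step is~\eqref{eq:var_3}. Here I would expand $Z_n^2$ using the representation $Z_n = \sum_{i=0}^{n-1} F\circ(\Phi^{(2)})^i$ from~\eqref{eq:Zn_sum}, giving a double sum of correlations $\int F\circ(\Phi^{(2)})^i\cdot F\circ(\Phi^{(2)})^j\,\rd\bfP^{(2)}$. The crucial structural input is that~$\bfP^{(2)}$ is $\Phi^{(2)}$-invariant, so each such correlation depends only on~$|i-j|$ and equals $C_{|i-j|} := \int F\cdot F\circ(\Phi^{(2)})^{|i-j|}\,\rd\bfP^{(2)}$. Counting diagonals then gives
\beqn
\frac1n\int Z_n^2\,\rd\bfP^{(2)} = C_0 + 2\sum_{k=1}^{n-1}\Bigl(1-\frac kn\Bigr)C_k.
\eeqn
The main (and essentially only) obstacle is justifying the limit of this Ces\`aro-type average: I expect to use the uniform bound $|C_k|\le 2\eta(k)$ from~\eqref{eq:Fcov_bound} together with the summability $\sum_k\eta(k)<\infty$ from~(SA1), so that dominated convergence lets me drop the weights~$1-k/n$ and conclude
\beqn
\lim_{n\to\infty}\frac1n\int Z_n^2\,\rd\bfP^{(2)} = \sum_{k=0}^\infty(2-\delta_{k0})C_k;
\eeqn
multiplying by~$\tfrac12$ then matches~\eqref{eq:var_2}.

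Finally, the last equality in~\eqref{eq:var_3} follows because $F$ is centered under~$\bfP^{(2)}$ by Lemma~\ref{lem:F_properties} and $\bfP^{(2)}$ is invariant, whence $\int Z_n\,\rd\bfP^{(2)}=\sum_{i=0}^{n-1}\int F\,\rd\bfP^{(2)}=0$ and therefore $\int Z_n^2\,\rd\bfP^{(2)}=\Var_{\bfP^{(2)}}(Z_n)=n\,\Var_{\bfP^{(2)}}(Z_n/\sqrt n)$. I would remark that the summability of the $C_k$ also re-derives absolute convergence of the series, consistent with Theorem~\ref{thm:sigma2n conv}, and that the sign of each $C_k$ need not be controlled — only its magnitude — which is exactly what~\eqref{eq:Fcov_bound} supplies.
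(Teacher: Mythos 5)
Your proposal is correct and follows essentially the same route as the paper's (much terser) proof: \eqref{eq:SA5_lim2}--\eqref{eq:SA5_lim3} with the marginal identity \eqref{eq:marginals_P} for \eqref{eq:var_1}, the identity \eqref{eq:Fcov} for \eqref{eq:var_2}, and the expansion \eqref{eq:Zn_sum} combined with the bound \eqref{eq:Fcov_bound} and summability from (SA1) for \eqref{eq:var_3}. Your explicit diagonal-counting argument using $\Phi^{(2)}$-invariance of $\bfP^{(2)}$, the dominated-convergence step for the Ces\`aro weights, and the observation that $\int Z_n\,\rd\bfP^{(2)}=0$ are exactly the details the paper leaves implicit.
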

\begin{proof}
The first line is just the expression of $\sigma^2$ rewritten using~\eqref{eq:SA5_lim2} and~\eqref{eq:SA5_lim3}. The second line then follows by~\eqref{eq:Fcov}.
The last line holds by~\eqref{eq:Zn_sum} together with~\eqref{eq:Fcov_bound} and~(SA1). 
\end{proof}

\begin{remark}\label{rem:Kubo1}
Note that the expression of~$\sigma^2$ in~\eqref{eq:var_2} is exactly {\bf one half} of the Green--Kubo formula in terms of the skew-product~$\Phi^{(2)}$, its invariant measure~$\bfP^{(2)}$, and the observable~$F$. This trick of ``doubling the dimension'' is not new. To our knowledge, however,~\eqref{eq:var_2} is a new observation at this level of generality. It answers a question raised in Section~7 of~\cite{AiminoNicolVaienti_2015} by Aimino, Nicol and Vaienti (who studied the special case where $\bP$, $\bfP$ and $\bfP^{(2)}$ are product measures, allowing for a non-random centering of $S_n$): The key that makes~\eqref{eq:var_2} an algebraic fact is the {\bf symmetry} property~\eqref{eq:P^2_symmetry} of the measure~$\bfP^{(2)}$.

It deserves a separate remark that even though $\sigma^2$ does not in general (see Remark~\ref{rem:Kubo2}) admit a classical Green-Kubo formula in terms of $\Phi$, $\bfP$, and $f$, ``doubling the dimension'' still yields~\eqref{eq:var_2}.
\end{remark}


\section{Positivity of $\sigma^2$}\label{sec:pos_var}
In this section we return to the question of positivity of the limit variance $\sigma^2$. We shall assume (SA1) and (SA3)--(SA5), the strong-mixing assumption (SA2) being unnecessary here. Again we assume nice parameters --- e.g.~$\psi>2$ --- for simplicity of the statements.

The foregoing discussion allows us to give some characterizations of the cases $\sigma^2 = 0$ and $\sigma^2>0$ on various levels of abstraction:
\begin{lem}\label{lem:zerovariance}
Suppose~$\eta(n) = Cn^{-\psi}$, $\psi>2$. 
\begin{enumerate}[(i)]
\item $\sigma^2 = 0$ is equivalent to each of the following conditions:
\smallskip  
\begin{enumerate}
\item 
$\sup_{n\ge 0}\int Z_n^2\,\rd\bfP^{(2)}<\infty$.
\item There exists $G\in L^2(\bfP^{(2)})$ such that $F = G-G\circ \Phi^{(2)}$.
\end{enumerate}

\medskip
\item $\sigma^2>0$ is equivalent to each of the following conditions: 
\smallskip
\begin{enumerate}
\item $\sup_{n\ge 0}\int Z_n^2\,\rd\bfP^{(2)}=\infty$.
\item There exist $c>0$ and $N>0$ such that $\int Z_n^2\,\rd\bfP^{(2)} \ge cn$ for all $n\ge N$.
\end{enumerate}

\medskip
\item If $\zeta>1$, then $\sigma^2>0$ is equivalent to each of the following conditions:
\smallskip
\begin{enumerate}
\item $\sup_{n\ge 1}n^{-\frac{1}{\psi}} \int Z_n^2\,\rd(\bP\otimes\mu\otimes\mu) = \infty$.
\item $\sup_{n\ge 1}n^{-\frac{1}{\psi}}\, \bE\Var_\mu(S_n) = \infty$.
\item There exist $c>0$ and $N>0$ such that $\int Z_n^2\,\rd(\bP\otimes\mu\otimes\mu) \ge cn$ for all $n\ge N$.
\item There exist $c>0$ and $N>0$ such that $\bE\Var_\mu(S_n) \ge cn$ for all $n\ge N$.
\end{enumerate}

\medskip
\item If $\bP$ is stationary, then $\sigma^2=0$ is equivalent to each of the following conditions:
\smallskip
\begin{enumerate}
\item $\sup_{n\ge 1}\int Z_n^2\,\rd(\bP\otimes\mu\otimes\mu) < \infty$.
\item $\sup_{n\ge 1} \bE\Var_\mu(S_n) < \infty$.
\end{enumerate}

\medskip
\item If $\bP$ is stationary, then $\sigma^2>0$ is equivalent to each of the following conditions:
\smallskip
\begin{enumerate}
\item $\sup_{n\ge 1} \int Z_n^2\,\rd(\bP\otimes\mu\otimes\mu) = \infty$.
\item $\sup_{n\ge 1} \bE\Var_\mu(S_n) = \infty$.
\item There exist $c>0$ and $N>0$ such that $\int Z_n^2\,\rd(\bP\otimes\mu\otimes\mu) \ge cn$ for all $n\ge N$.
\item There exist $c>0$ and $N>0$ such that $\bE\Var_\mu(S_n) \ge cn$ for all $n\ge N$.
\end{enumerate}
\end{enumerate}
\end{lem}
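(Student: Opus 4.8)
The plan is to reduce every one of the five equivalences to the single quantity $\int Z_n^2\,\rd\bfP^{(2)}$, exploiting that $Z_n=\sum_{i=0}^{n-1}F\circ(\Phi^{(2)})^i$ from~\eqref{eq:Zn_sum} is a Birkhoff sum of the centered observable $F$ over the measure-preserving system $(\Phi^{(2)},\bfP^{(2)})$. Writing $U$ for the unitary Koopman operator $UG=G\circ\Phi^{(2)}$ on $L^2(\bfP^{(2)})$ and $C_k=\int F\cdot F\circ(\Phi^{(2)})^k\,\rd\bfP^{(2)}$, I would first record three facts that drive everything: (1) $\int Z_n^2\,\rd\bfP^{(2)}=\|Z_n\|_{L^2(\bfP^{(2)})}^2=\sum_{|k|<n}(n-|k|)C_k$ since $U$ is unitary and $C_{-k}=C_k$; (2) $\tfrac1n\int Z_n^2\,\rd\bfP^{(2)}\to 2\sigma^2$ by~\eqref{eq:var_3}, with $2\sigma^2=\sum_{k\in\bZ}C_k$; and (3) $|C_k|\le 2\eta(k)$ by~\eqref{eq:Fcov_bound}, whence $\sum_k|k|\,|C_k|<\infty$ because $\psi>2$.

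For part~(i) I would prove the cycle $\sigma^2=0\Rightarrow(a)\Rightarrow(b)\Rightarrow\sigma^2=0$ together with the trivial $(b)\Rightarrow(a)$. The step $\sigma^2=0\Rightarrow(a)$ is elementary: since $\sum_{k\in\bZ}C_k=0$, fact~(1) gives $\int Z_n^2\,\rd\bfP^{(2)}=n\sum_{|k|<n}C_k-\sum_{|k|<n}|k|C_k$, where $|n\sum_{|k|\ge n}C_k|\le Cn\cdot n^{1-\psi}\to 0$ and $\sum_{|k|<n}|k|C_k$ converges absolutely, so $\int Z_n^2\,\rd\bfP^{(2)}$ has a finite limit and is bounded. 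The step $(b)\Rightarrow(a)$ is immediate, since $F=G-UG$ forces $Z_n=G-U^nG$ with $\|Z_n\|\le 2\|G\|$. For the crux $(a)\Rightarrow(b)$ I would use the weak-compactness averaging argument: the Cesàro means $W_N=\tfrac1N\sum_{n=0}^{N-1}Z_n$ satisfy $\|W_N\|\le\sup_n\|Z_n\|$, a short computation gives $(I-U)W_N=F-\tfrac1N Z_N$, and a weak subsequential limit $W_{N_j}\rightharpoonup G\in L^2(\bfP^{(2)})$ yields $(I-U)G=F$ because $\tfrac1N Z_N\to 0$ in norm while $I-U$ is weakly continuous. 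Finally $(b)\Rightarrow\sigma^2=0$ is fact~(2) applied to a bounded sequence. Part~(ii) is then immediate: its (a) is the negation of~(i)(a) combined with $\sigma^2\ge 0$, while its (b) is equivalent to $\sigma^2>0$ directly from fact~(2).

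For part~(iii) I would record the identity $\int Z_n^2\,\rd(\bP\otimes\mu\otimes\mu)=2\,\bE\Var_\mu(S_n)=2n\,\bE\sigma_n^2$ by integrating~\eqref{eq:varZ} against $\bP$, giving $(a)\!\iff\!(b)$ and $(c)\!\iff\!(d)$ at once; Theorem~\ref{thm: sigma_n^2 to sigma^2} with $\zeta>1,\psi>2$ supplies $\bE\sigma_n^2=\sigma^2+O(n^{1/\psi-1})$, so that $\bE\Var_\mu(S_n)=\sigma^2 n+O(n^{1/\psi})$, and reading off the cases $\sigma^2>0$ versus $\sigma^2=0$ shows each of (a)--(d) is equivalent to $\sigma^2>0$. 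For parts~(iv) and~(v), stationarity of $\bP$ is the new ingredient: it forces $\bE=\bar\bE$, so Lemma~\ref{lem:pre} via~\eqref{eq:bar E conv} yields $|D_{ij}-\tfrac12 C_{|i-j|}|\le C\eta(\min(i,j))$ for $D_{ij}=\bE[\mu(f_if_j)-\mu(f_i)\mu(f_j)]$, which combined with the trivial bound $\le C\eta(|i-j|)$ gives $|D_{ij}-\tfrac12 C_{|i-j|}|\le C\eta(\max(\min(i,j),|i-j|))$. Summing over $i,j\ge 0$ and using $\sum_m m\,\eta(m)<\infty$ (again $\psi>2$) produces the uniform comparison $\bE\Var_\mu(S_n)=\tfrac12\int Z_n^2\,\rd\bfP^{(2)}+O(1)$; since an $O(1)$ perturbation preserves both boundedness and any lower bound of the form $\ge cn$, parts~(iv) and~(v) follow at once from parts~(i) and~(ii).

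The main obstacle will be the coboundary step $(a)\Rightarrow(b)$ of part~(i): producing a genuine $L^2(\bfP^{(2)})$ transfer function $G$ with $F=G-G\circ\Phi^{(2)}$ from mere boundedness of the partial sums. The weak-compactness argument handles this cleanly, but it requires verifying the algebraic identity $(I-U)W_N=F-\tfrac1N Z_N$ and the weak continuity of $I-U$, which is the one genuinely functional-analytic point. The second delicate point is the uniform-in-$n$ comparison underlying the stationary cases, whose finiteness rests entirely on $\sum_m m\,\eta(m)<\infty$, i.e.\ on the standing hypothesis $\psi>2$ --- the very hypothesis that also makes $\int Z_n^2\,\rd\bfP^{(2)}$ converge in part~(i). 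Everything else is bookkeeping with the correlation sums $C_k$ and $D_{ij}$.
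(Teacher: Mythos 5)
Your proposal is correct, and its overall skeleton coincides with the paper's proof: everything is reduced to the asymptotics of $\int Z_n^2\,\rd\bfP^{(2)}$ via the correlation expansion and \eqref{eq:var_3}; part~(iii) is handled exactly as in the paper, by integrating \eqref{eq:varZ} against $\bP$ and invoking Theorem~\ref{thm: sigma_n^2 to sigma^2}; and parts~(iv)--(v) rest on the uniform comparison $\bE\Var_\mu(S_n)=\tfrac12\int Z_n^2\,\rd\bfP^{(2)}+O(1)$, which the paper isolates as Lemma~\ref{lem:varZ_stationary} and which you reprove inline with the same two bounds ($\le C\eta(|i-j|)$ trivially, $\le C\eta(\min(i,j))$ from stationarity via \eqref{eq:bar E conv}) and the same summability argument requiring $\psi>2$; note only that identifying the limit of $\bE[\mu(f_if_{i+k})-\mu(f_i)\mu(f_{i+k})]$ as $\tfrac12 C_k$ uses (SA5), which is a standing assumption of that appendix and should be named. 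The one genuinely different ingredient is your treatment of the coboundary equivalence (i)(b): the paper simply cites Leonov's theorem as a black box (existence of $\lim_n\int Z_n^2\,\rd\bfP^{(2)}$ in $[0,\infty]$, with finiteness equivalent to both the uniform bound and the coboundary representation), whereas you prove the implication (a)$\Rightarrow$(b) from scratch by the Ces\`aro/weak-compactness argument: the identity $(I-U)W_N=F-\tfrac1N Z_N$ is correct (since $UZ_n=Z_{n+1}-F$ and $Z_0=0$), bounded sets in $L^2(\bfP^{(2)})$ are weakly sequentially compact, and $I-U$ is weak-weak continuous, so a weak subsequential limit $G$ of $W_N$ solves $(I-U)G=F$. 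This makes your proof self-contained at the cost of a page of functional analysis, while the paper's citation is shorter. One small correction: the Koopman operator of $\Phi^{(2)}$ is an isometry, not a unitary, since the skew product over the one-sided shift is not invertible; fortunately your argument nowhere uses surjectivity of $U$ --- fact~(1) needs only that $\bfP^{(2)}$ is $\Phi^{(2)}$-invariant, and the weak-compactness step needs only that $I-U$ is a bounded operator --- so the slip is harmless.
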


From the point of view of applications, parts~(iii)(b\&d), (iv)(b) and~(v)(b\&d) may be the most relevant ones as they involve the measures~$\bP$ and~$\mu$, and the process $(S_n)_{n\ge 1}$, which are immediately apparent from the definition of the system. Note that~(iii)(b) is the same condition as in Corollary~\ref{cor:nonzero_variance}(ii).

\begin{proof}[Proof of Lemma~\ref{lem:zerovariance}]
By~\eqref{eq:Fcov_bound} we can appeal to a well-known result due to Leonov~\cite{Leonov_1961}, which guarantees that
the limit
$
b = \lim_{n\to\infty}\int Z_n^2\,\rd\bfP^{(2)}
$
exists in $[0,\infty]$, and $b<\infty$ if and only if $\sup_{n\ge 0}\int Z_n^2\,\rd\bfP^{(2)}<\infty$. Moreover, the last condition is equivalent to the existence of $G\in L^2(\bfP^{(2)})$ such that $F = G-G\circ \Phi^{(2)}$. On the other hand, standard computations and the formula for~$\sigma^2$ in~\eqref{eq:var_2} yield
\beqn
\begin{split}
\int Z_n^2\,\rd\bfP^{(2)} & = 2\sigma^2 n - 2n\sum_{k=n}^\infty \int F\cdot F\circ(\Phi^{(2)})^k\,\rd\bfP^{(2)} - 2\sum_{k=1}^{n-1} k \int F\cdot F\circ(\Phi^{(2)})^k\,\rd\bfP^{(2)}
\\
& =  2\sigma^2 n + O\!\left(n \sum_{k=n}^\infty k^{-\psi} + \sum_{k=1}^{n-1} k^{1-\psi}\right)
=  2\sigma^2 n + O(1).
\end{split}
\eeqn
Here $\psi>2$ was used.
Thus, $\sigma^2>0$ is equivalent to linear growth of $\int Z_n^2\,\rd\bfP^{(2)}$ to infinity, while $\sigma^2 = 0$ is equivalent to $\sup_{n\ge 0}\int Z_n^2\,\rd\bfP^{(2)}<\infty$. Parts~(i) and~(ii) are proved.

As for part~(iii), \eqref{eq:varZ} and Theorem~\ref{thm: sigma_n^2 to sigma^2} with $\zeta>1$ yield 
\beqn
\int Z_n^2\,\rd(\bP\otimes\mu\otimes\mu) = 2\,\bE\Var_\mu(S_n) = 2(\sigma^2+O(n^{\frac{1}{\psi}-1})) n = 2\sigma^2 n + O(n^{\frac{1}{\psi}}).
\eeqn
If $\sigma^2 = 0$, the right side of the estimate is $O(n^{\frac{1}{\psi}})$, and each of the conditions (a)--(d) fails. If $\sigma^2>0$, the right side grows asymptotically linearly in~$n$, and (a)--(d) are all satisfied.

Finally, parts~(iv) and~(v) follow from~(i) and~(ii), respectively, because in the stationary case it holds that $\int Z_n^2\,\rd(\bP\otimes\mu\otimes\mu) = \int Z_n^2\,\rd\bfP^{(2)} + O(1)$; see Lemma~\ref{lem:varZ_stationary} below.
\end{proof}

We close the section with the lemma below, which was needed in the last part of the preceding proof.
\begin{lem}\label{lem:varZ_stationary}
Suppose $\bP$ is stationary and~$\eta(n) = Cn^{-\psi}$, $\psi>2$. Then 
\beqn
\sup_{n\ge 1}\left|\int Z_n^2\,\rd(\bP\otimes\mu\otimes\mu) - \int Z_n^2\,\rd\bfP^{(2)}\right| < \infty.
\eeqn
\end{lem}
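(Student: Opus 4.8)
The plan is to write both integrals as double sums of two--point correlations over the skew product and compare them pair by pair, exploiting that the stationarity of~$\bP$ upgrades the qualitative convergence behind Lemma~\ref{lem: bar E limits} into a quantitative rate. First I would use~\eqref{eq:Zn_sum}, namely $Z_n = \sum_{i=0}^{n-1} F\circ(\Phi^{(2)})^i$, to expand
\beqn
\int Z_n^2\,\rd\nu = \sum_{i=0}^{n-1}\sum_{j=0}^{n-1} \int F\circ(\Phi^{(2)})^i\cdot F\circ(\Phi^{(2)})^j\,\rd\nu
\eeqn
for each of $\nu = \bfP^{(2)}$ and $\nu = \bP\otimes\mu\otimes\mu$. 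For $\nu = \bfP^{(2)}$, the $\Phi^{(2)}$--invariance collapses the $(i,j)$ term to $\rho(|i-j|)$, where I abbreviate $\rho(k) = \int F\cdot F\circ(\Phi^{(2)})^k\,\rd\bfP^{(2)}$. For $\nu = \bP\otimes\mu\otimes\mu$, expanding $F = f(x)-f(y)$ and integrating in $(x,y)$ first turns the $(i,j)$ term into $2\,\bE c_{ij}$, consistently with~\eqref{eq:varZ}. Hence the quantity to be controlled is
\beqn
\int Z_n^2\,\rd(\bP\otimes\mu\otimes\mu) - \int Z_n^2\,\rd\bfP^{(2)} = \sum_{i=0}^{n-1}\sum_{j=0}^{n-1}\bigl(2\,\bE c_{ij} - \rho(|i-j|)\bigr).
\eeqn

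Next I would obtain two complementary bounds on the per--pair discrepancy, writing $k = |i-j|$ for the gap. By~\eqref{eq:Fcov_bound} one has $|\rho(k)|\le 2\eta(k)$, while (SA1) gives $|\bE c_{ij}|\le\eta(k)$, so the discrepancy is at most $C\eta(k)$. For the sharper bound I would use stationarity: then $\bar\bP = \bP$, so $\bE[g_{ik}^a\circ\tau^{i'-i}] = \bE g_{ik}^a$, and applying Lemma~\ref{lem:pre} with the larger index $i'\ge i$ and $r=i$ gives $|\bE g_{i'k}^a - \bE g_{ik}^a|\le C\eta(i)$. Letting $i'\to\infty$ shows that $(\bE g_{ik}^a)_i$ converges to $\lim_{r\to\infty}\bar\bE g_{rk}^a$ (cf.\ Lemma~\ref{lem: bar E limits}) and that $|\bE g_{ik}^a - \lim_{r\to\infty}\bar\bE g_{rk}^a|\le C\eta(i)$ uniformly in $k$ and $a$. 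Since $c_{i,i+k} = g_{ik}^1 - g_{ik}^2$, and since~\eqref{eq:Fcov} with~\eqref{eq:SA5_lim2}--\eqref{eq:SA5_lim3} identifies $\tfrac12\rho(k) = \lim_{r\to\infty}\bar\bE c_{r,r+k}$, I would conclude, for $i\le j$ with $k=j-i$,
\beqn
|2\,\bE c_{i,i+k} - \rho(k)| = 2\bigl|\bE c_{i,i+k} - \lim_{r\to\infty}\bar\bE c_{r,r+k}\bigr|\le C\eta(i).
\eeqn

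Combining the two bounds and using that $\eta$ is non--increasing gives, for every pair, $|2\,\bE c_{ij} - \rho(|i-j|)|\le C\min\{\eta(i),\eta(|i-j|)\} = C\eta(\max\{i,|i-j|\})$. Summing and exploiting the symmetry in $(i,j)$, the total discrepancy is dominated by $C\sum_{i,k\ge 0}\eta(\max\{i,k\}) = C\sum_{m=0}^\infty (2m+1)\eta(m)$, a series that converges exactly because $\eta(m)=Cm^{-\psi}$ with $\psi>2$. As this bound does not depend on $n$, the supremum over $n$ is finite. The main obstacle is the sharp estimate $|\bE g_{ik}^a - \lim_{r\to\infty}\bar\bE g_{rk}^a|\le C\eta(i)$: it is precisely stationarity that converts the asymptotic mean into an $i$--decaying rate uniform in the gap $k$, and it is the pairing of this decay in the absolute index with the decay $\eta(k)$ in the gap that renders the doubly indexed sum summable. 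This is also the step where the hypothesis $\psi>2$, rather than merely $\psi>1$, becomes essential.
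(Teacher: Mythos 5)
Your proof is correct and follows essentially the same route as the paper: expand both integrals via~\eqref{eq:Zn_sum}, use $\Phi^{(2)}$-invariance to reduce the $\bfP^{(2)}$ terms to gap-dependent correlations, and bound each per-pair discrepancy by the minimum of a gap-decay estimate (from (SA1) and~\eqref{eq:Fcov_bound}) and an absolute-index estimate $C\eta(i)$ obtained from Lemma~\ref{lem:pre}, stationarity, and (SA5), yielding the summable bound $C\sum_m (2m+1)\eta(m)$. The only cosmetic differences are your re-derivation of the bound~\eqref{eq:bar E conv} directly from Lemma~\ref{lem:pre} instead of citing it, and your indexing over all pairs $(i,j)$ rather than the paper's $(i,k)$ with $(2-\delta_{k0})$ factors.
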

\begin{proof}
We have
\beq
\int Z_n^2\,\rd(\bP\otimes\mu\otimes\mu)= \sum_{i=0}^{n-1}\sum_{k=0}^{n-i-1}(2-\delta_{k0})\int F\circ(\Phi^{(2)})^i\, F\circ(\Phi^{(2)})^{i+k}\,\rd(\bP\otimes\mu\otimes\mu)\label{eq:Z_n P mu mu}
\eeq
and
\beq
\begin{split}
\int Z_n^2\,\rd\bfP^{(2)} & = \sum_{i=0}^{n-1}\sum_{k=0}^{n-i-1}(2-\delta_{k0})\int F\circ(\Phi^{(2)})^i \,F\circ(\Phi^{(2)})^{i+k}\,\rd\bfP^{(2)} 
\\
 & =\sum_{i=0}^{n-1}\sum_{k=0}^{n-i-1}(2-\delta_{k0})\int F\cdot F\circ(\Phi^{(2)})^{k}\,\rd\bfP^{(2)}.\label{eq:Z_n P^2}
\end{split}
\eeq
Denote 
\beqn
\begin{split}
a_{ik} & = \int F\circ(\Phi^{(2)})^i\, F\circ(\Phi^{(2)})^{i+k}\,\rd(\bP\otimes\mu\otimes\mu)
 = 2\bE[\mu(f_{i}f_{i+k})]- 2\bE[\mu(f_{i})\mu(f_{i+k})]
\end{split}
\eeqn
and 
\beqn
 b_k =\int F\cdot F\circ(\Phi^{(2)})^{k}\,\rd\bfP^{(2)}.
\eeqn
Note that $|a_{ik}|\le 2\eta(k)$ by (SA1) and $|b_k|\le 2\eta(k)$ by \eqref{eq:Fcov_bound}. Thus $|a_{ik}-b_k|\le Ck^{-\psi}$ for all $i$ and $k$. By stationarity ($\bP = \bar\bP$) and (SA5),
 \beqn
 \begin{split}
 \lim_{i\to \infty} a_{ik}
= b_k.
 \end{split}
\eeqn
Again by stationarity, \eqref{eq:bar E conv} implies that $|a_{ik}-b_k|\le C\eta(i)=Ci^{-\psi}.$ Thus
\beq
|a_{ik}-b_k|\le C\max\{i,k\}^{-\psi}.\label{eq:a_ik-b_k}
\eeq
Collecting \eqref{eq:Z_n P mu mu}, \eqref{eq:Z_n P^2} and \eqref{eq:a_ik-b_k} we get the estimate
\beqn
 \begin{split}
 &\left|\int Z_n^2\,\rd(\bP\otimes\mu\otimes\mu) - \int Z_n^2\,\rd\bfP^{(2)}\right| 
 \\
  \le\ &\sum_{i=0}^{n-1}\sum_{k=0}^{n-i-1}(2-\delta_{k0})|a_{ik}-b_k|
  \le C\sum_{i=0}^{\infty}\sum_{k=0}^{\infty} \max\{i,k\}^{-\psi}
  = C\sum_{k=0}^{\infty}(2k+1)k^{-\psi} < \infty 
 \end{split}
\eeqn
for all $n$. The proof is complete.
\end{proof}


\section{Effect of the fiberwise centering of $W_n$}\label{sec:centerings}
In this section we discuss Remark~\ref{rem:variances} concerning the variance of~$W_n$, as opposed to the fiberwise-centered $\bar W_n = W_n - \mu(W_n)$. Note that
\beq\label{eq:varW}
\Var_{\bP\otimes\mu} W_n = \bE\mu(W_n^2) - (\bE\mu(W_n))^2
\eeq
and
\beq\label{eq:varcenter}
\Var_\bP \mu(W_n) = \bE[\mu(W_n)^2] - (\bE\mu(W_n))^2.
\eeq
The difference of \eqref{eq:varW} and $\eqref{eq:varcenter}$ equals
\beqn
\bE\sigma_n^2 = \Var_{\bP\otimes\mu} \bar W_n =  \bE\mu(W_n^2) -  \bE[\mu(W_n)^2].
\eeqn
Under the assumptions of our paper
\beqn
\bE\sigma_n^2 = \sigma^2 + o(1).
\eeqn
Therefore $\Var_{\bP\otimes\mu} W_n$ and $\Var_\bP \mu(W_n)$ either converge or diverge simultaneously. We now derive their asymptotic expressions in terms of series, restricting to the case where the law $\bP$ of the selection process is {\bf stationary}.

\begin{lem}\label{lem:other_variances}
Let $\bP$ be stationary. Let (SA1)--(SA3) hold with $\eta(n) = Cn^{-\psi}$, $\psi>1$, and $\alpha(n) = n^{-\gamma}$, $\gamma>1$. Then
\beqn
\Var_{\bP\otimes\mu} W_n =  \sum_{k=0}^\infty (2-\delta_{k0})\lim_{i\to\infty}\{\bE[\mu(f_{i}f_{i+k})]- \bE \mu(f_{i})\bE \mu(f_{i+k})\} + O(n^{\frac1{\min\{\gamma,\psi\}}-1})
\eeqn
and
\beqn
\Var_\bP\mu(W_n)  =  \sum_{k=0}^\infty (2-\delta_{k0})\lim_{i\to\infty}\{\bE[\mu(f_{i})\mu(f_{i+k})]- \bE \mu(f_{i})\bE \mu(f_{i+k})\} +O(n^{\frac1{\min\{\gamma,\psi\}}-1}).
\eeqn
Here the limits exist and the series converge absolutely. If also (SA5) holds, then
\beqn
\begin{split}
& \lim_{n\to\infty}\Var_{\bP\otimes\mu} W_n 
\\
=\ & \sum_{k=0}^\infty (2-\delta_{k0})\left\{ \int f(x)\, f(\varphi(k,\omega,x))\,\rd\bfP(\omega,x)- \left(\int f(x)\, \rd\bfP(\omega,x)\right)^2\right\}
\end{split}
\eeqn
and
\beqn
\begin{split}
& \lim_{n\to\infty}\Var_\bP\mu(W_n)
\\
=\ & \sum_{k=0}^\infty (2-\delta_{k0})\left\{ \int f(x)\, f(\varphi(k,\omega,y))\,\rd\bfP^{(2)}(\omega,x,y)- \left(\int f(x)\, \rd\bfP(\omega,x)\right)^2\right\}
\end{split}
\eeqn
using the RDS notations.
\end{lem}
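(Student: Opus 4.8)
The plan is to treat both variances as normalized double sums and feed them into the machinery of Section~\ref{sec:conv of sigma_n^2 to sigma^2}. First I would record, as in Remark~\ref{rem:variances}, the identities
\beqn
\Var_{\bP\otimes\mu} W_n = \frac1n\sum_{i=0}^{n-1}\sum_{j=0}^{n-1}\{\bE[\mu(f_if_j)] - \bE[\mu(f_i)]\bE[\mu(f_j)]\}
\eeqn
and
\beqn
\Var_\bP \mu(W_n) = \frac1n\sum_{i=0}^{n-1}\sum_{j=0}^{n-1}\{\bE[\mu(f_i)\mu(f_j)] - \bE[\mu(f_i)]\bE[\mu(f_j)]\},
\eeqn
which follow from $W_n = S_n/\sqrt n$ by expanding the squares. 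Each summand is symmetric in $(i,j)$, so setting $j = i+k$ and collecting the diagonal yields the form $\frac1n\sum_{i=0}^{n-1}\sum_{k=0}^{n-1-i}a_{ik}$ with $a_{ik} = (2-\delta_{k0})\{\bE[\mu(f_if_{i+k})] - \bE[\mu(f_i)]\bE[\mu(f_{i+k})]\}$ in the first case and $a_{ik} = (2-\delta_{k0})\Cov_\bP(\mu(f_i),\mu(f_{i+k}))$ in the second. This is exactly the template of Theorem~\ref{thm:var_limit}, provided I can produce a summable dominating tail $|a_{ik}|\le\tilde\eta(k)$.

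The crux, and the one genuinely new estimate, is a decay bound on the overlap covariance $\Cov_\bP(\mu(f_i),\mu(f_{i+k}))$. The difficulty is that $\mu(f_i)$ and $\mu(f_{i+k})$ both depend on $\omega_1,\dots,\omega_i$, so strong mixing (SA2) cannot be applied directly. I would first use the memory-loss property~\eqref{eq:memloss_cocycle} with $r = i + \lfloor k/2\rfloor$ to replace $\mu(f_{i+k}) = \mu(f\circ\varphi(i+k,\omega))$ by $\mu(f\circ\varphi(\lceil k/2\rceil,\tau^{i+\lfloor k/2\rfloor}\omega))$, at the cost of an error $C\eta(\lceil k/2\rceil)$; the new function depends only on $\omega_{i+\lfloor k/2\rfloor+1},\dots,\omega_{i+k}$ and is therefore separated from $\mu(f_i)$ by a block of $\lfloor k/2\rfloor$ coordinates. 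Applying~\eqref{eq:strongmixing} to this separated pair then gives
\beqn
|\Cov_\bP(\mu(f_i),\mu(f_{i+k}))| \le C\bigl(\eta(\lceil k/2\rceil) + \alpha(\lfloor k/2\rfloor)\bigr) \le C k^{-\min\{\psi,\gamma\}}.
\eeqn
Since $\min\{\psi,\gamma\}>1$, the right side is a summable tail $\tilde\eta(k)$. Writing $\mu(f_if_{i+k}) - \mu(f_i)\mu(f_{i+k}) = \mu(\bar f_i\bar f_{i+k})$ and invoking (SA1), the first-case coefficient obeys the same bound $|a_{ik}|\le\eta(k) + C\tilde\eta(k) \le C\tilde\eta(k)$.

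With the summable tail in hand, I would run the convergence argument of Theorem~\ref{thm:var_limit}. Stationarity of $\bP$ simplifies (SA4) drastically: since $\int g\circ\tau^i\,\rd\bP = \int g\,\rd\bP$ the averaging is exact, so Lemma~\ref{lem: split sum} applies with any $\zeta>1$, giving rate $h_\zeta(n) = n^{-1}$. Thus the per-$k$ Cesàro averages $\frac1n\sum_i\bE[\mu(f_if_{i+k})]$ and $\frac1n\sum_i\bE[\mu(f_i)\mu(f_{i+k})]$ converge at rate $n^{-1}$ to $\lim_r\bar\bE g_{rk}^a$; and since~\eqref{eq:bar E conv} gives $|\bE[\mu(f_i)] - \bar f_\infty|\le C\eta(i)$ with $\bar f_\infty = \lim_i\bar\bE[\mu(f_i)]$, the Cesàro average of the product $\bE[\mu(f_i)]\bE[\mu(f_{i+k})]$ converges to $\bar f_\infty^2$ at rate $n^{-1}$, uniformly in $k$. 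Feeding $r_k(n) = O(n^{-1})$ and $R(K) = \sum_{k>K}\tilde\eta(k)\asymp K^{1-\min\{\psi,\gamma\}}$ into~\eqref{eq:mean-rate} gives a bound $CK(n^{-1} + K^{-\min\{\psi,\gamma\}})$; optimizing at $K\asymp n^{1/\min\{\psi,\gamma\}}$ produces the claimed rate $O(n^{\frac1{\min\{\gamma,\psi\}}-1})$ for both variances and simultaneously identifies the limits as the stated absolutely convergent series. Finally, under (SA5) I would substitute~\eqref{eq:SA5_lim1},~\eqref{eq:SA5_lim2} and~\eqref{eq:SA5_lim3} for the limits $\lim_i\bar\bE[\mu(f_i)]$, $\lim_i\bar\bE[\mu(f_if_{i+k})]$ and $\lim_i\bar\bE[\mu(f_i)\mu(f_{i+k})]$ to rewrite the two limiting series in the RDS form. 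The main obstacle is the overlap-covariance estimate above; everything else is a stationary specialization of Section~\ref{sec:conv of sigma_n^2 to sigma^2}.
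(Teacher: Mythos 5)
Your proposal is correct and its core coincides with the paper's own proof: the decisive step in both is the overlap-covariance bound $|\Cov_\bP(\mu(f_i),\mu(f_{i+k}))|\le C\bigl(\eta(\lceil k/2\rceil)+\alpha(\lfloor k/2\rfloor)\bigr)\le Ck^{-\min\{\gamma,\psi\}}$, obtained exactly as you do --- memory loss (SA3) applied at the midpoint to replace $\mu(f_{i+k})$ by a function of $\omega_{i+\lfloor k/2\rfloor+1},\dots,\omega_{i+k}$, then strong mixing (SA2) across the gap --- which is then fed into the Lemma~\ref{lem:mean} machinery with $r_k(n)=O(n^{-1})$ (from stationarity and the Cauchy estimate \eqref{eq:bar E conv}) and the optimization $K\asymp n^{1/\min\{\gamma,\psi\}}$. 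The only structural difference is minor: the paper runs this argument once, for $\Var_\bP\mu(W_n)$, and then deduces the $\Var_{\bP\otimes\mu}W_n$ statement from the identity $\Var_{\bP\otimes\mu}W_n=\bE\sigma_n^2+\Var_\bP\mu(W_n)$ together with Theorem~\ref{thm: sigma_n^2 to sigma^2} (stationarity permitting any $\zeta>1$), whereas you run the double-sum argument a second time directly on $\Var_{\bP\otimes\mu}W_n$ using the decomposition $\bE[\mu(f_if_{i+k})]-\bE\mu(f_i)\bE\mu(f_{i+k})=\bE[\mu(\bar f_i\bar f_{i+k})]+\Cov_\bP(\mu(f_i),\mu(f_{i+k}))$; both routes are valid and yield the claimed rate.
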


\begin{remark}\label{rem:Kubo2}
Note that in the latter case
\beqn
\lim_{n\to\infty}\Var_{\bP\otimes\mu} W_n = \sum_{k=0}^\infty (2-\delta_{k0}) \Cov_{\bfP}(f,f\circ\Phi^k).
\eeqn
This is the classical Green--Kubo formula in terms of the skew-product~$\Phi$, its invariant measure~$\bfP$, and the observable~$f$. Let us stress that it is not the expression of~$\sigma^2$, save for exactly the special case $\lim_{n\to\infty}\Var_\bP\mu(W_n) = 0$. The latter special case is the very same in which Abdelkader and Aimino \cite{AbdelkaderAimino_2016} establish a quenched central limit theorem with non-random centering, assuming i.i.d.\@ randomness ($\bP = \bP_0^\bN$) in particular; see also Remark~\ref{rem:Kubo1}.
\end{remark}

\begin{proof}[Proof of Lemma~\ref{lem:other_variances}]
We prove the statements concerning~$\Var_\bP\mu(W_n)$ first.
We have
\beqn
\begin{split}
\bE[\mu(W_n)^2] - (\bE\mu(W_n))^2 
&=  \frac1n\sum_{i=0}^{n-1}\sum_{k=0}^{n-i-1} a_{ik}
\end{split}
\eeqn
where
\beqn
a_{ik}=(2-\delta_{k0})\{\bE[\mu(f_{i})\mu(f_{i+k})]- \bE \mu(f_{i})\bE \mu(f_{i+k})\}.
\eeqn
We will apply Lemma \ref{lem:mean} to show convergence as $n\to\infty$. To that end, we need control of $a_{ik}$ in the limits $i\to\infty$ and $k\to\infty$. We begin with the first limit.
 
By~\eqref{eq:memloss_cocycle} below (SA3), we have a uniform bound
\beq\label{eq:mufi}
|\mu(f\circ\varphi(i,\omega))-\mu(f\circ\varphi(r,\tau^{i-r}\omega))| \le C\eta(r)
\eeq
whenever $r\le i$. Since $\bP$ is stationary, this yields
\beqn
|\bE\mu(f_{i})-\bE\mu(f_{r})| \le C\eta(r).
\eeqn
Thus, $(\bE\mu(f_{i}))_{i= 0}^\infty$ is Cauchy, so its limit exists and
\beqn
\left|\lim_{i\to\infty}\bE\mu(f_{i})\bE\mu(f_{i+k})-\bE\mu(f_{r})\bE\mu(f_{r+k})\right| \le C\eta(r).
\eeqn
Since $\bar \bP = \bP$ by stationarity, \eqref{eq:bar E conv} gives
\beqn
\left|\lim_{i\to\infty}\bE[\mu(f_{i})\mu(f_{i+k})]-\bE[\mu(f_{r})\mu(f_{r+k})]\right| \le C\eta(r).
\eeqn
Thus, the limit
\beqn
b_k= (2-\delta_{k0})\lim_{i\to\infty}\{\bE[\mu(f_{i})\mu(f_{i+k})]- \bE \mu(f_{i})\bE \mu(f_{i+k})\}
\eeqn
exists and
\beqn
a_{ik} = b_k + O(\eta(i))
\eeqn
as $i\to\infty$. Since $\eta$ is summable,
\beqn
r_k(n) =  \frac1n\sum_{i = 0}^{n-1} a_{ik} - b_k = \frac1n\sum_{i=0}^{n-1}O(\eta(i)) = O(n^{-1})
\eeqn
as $n\to\infty$. Both of the preceding bounds are uniform in $k$.

In order to bound $a_{ik}$ as $k\to\infty$, first note that~\eqref{eq:mufi} allows to estimate
\beqn
|\mu(f_{i+k}) - v| \le C\eta(k-r)
\eeqn
for $r\le k$,
where the function $v(\omega) = \mu(f\circ\varphi(k-r,\tau^{i+r}\omega))$ is $\cF_{i+r+1}^\infty$-measurable and bounded; see~Section~\ref{sec:selection_process} for terminology.
Thus
\beqn
a_{ik} = (2-\delta_{k0})\{\bE[\mu(f_{i})v]- \bE \mu(f_{i})\bE v\} + O(\eta(k-r)) = O(\alpha(r)) + O(\eta(k-r)),
\eeqn
the last estimate being true by strong mixing. Picking $r\asymp k/2$ yields
\beqn
a_{ik} = O(k^{-\min\{\gamma,\psi\}})
\eeqn
uniformly in $i$. Since $\gamma>1$ and $\psi>1$, this bound is summable, so Lemma~\ref{lem:mean} can now be applied; recall~\eqref{eq:standing}. The bound in~\eqref{eq:mean-rate} becomes
\beqn
\left|\bE[\mu(W_n)^2] - (\bE\mu(W_n))^2  - \sum_{k=0}^\infty b_k\right| \le C(K^{1-\min\{\gamma,\psi\}} + Kn^{-1}).
\eeqn
Now, choosing $K\asymp n^{1/\min\{\gamma,\psi\}}$ yields the upper bound~$Cn^{1/\min\{\gamma,\psi\}-1}$ claimed.

The expressions of the limits~$b_k$ in term of the RDS notations is obtained with the help of \eqref{eq:SA5_lim1}--\eqref{eq:SA5_lim3}, recalling again $\bP = \bar\bP$ due to stationarity.

Finally, the claims regarding $\Var_{\bP\otimes\mu} W_n = \Var_{\bP\otimes\mu} \bar W_n + \Var_\bP\mu(W_n)$ follow since we already have control of both terms on the right side: in the stationary case at hand, Theorem~\ref{thm: sigma_n^2 to sigma^2} applies with any $\zeta>1$, yielding $\Var_{\bP\otimes\mu} \bar W_n = \sigma^2 + O(n^{\frac1\psi -1})$.
\end{proof}


\section{(SA5'): a less abstract substitute for (SA5)}\label{sec:SA5'}
Standing Assumption (SA5) is abstract in that it involves the invariant measure~$\bfP^{(2)}$ of the RDS~$\varphi^{(2)}$, and a number of properties of the measure, which are not obvious from the setup of the system at the beginning of the paper. For that reason we give in this section, as an example, another assumption which (i)~is more concrete in that it involves only the initial measure~$\mu$ and the basic cocycle~$\varphi$, and (ii)~is stronger than (SA5).

\medskip
\noindent{\bf Standing Assumption (SA5').} Throughout this section we assume following: the measures $\varphi(n,\omega)_*\mu$ have uniformly square integrable densities with respect to $\mu$, i.e., there exists $K>0$ such that
\beq\label{eq:SA5'.1}
\left\|\frac{\rd\varphi(n,\omega)_*\mu}{\rd\mu}\right\|_{L^2(\mu)} \le K
\eeq
for all $n$ and $\omega$.
Moreover, for every bounded measurable $g:X\to\bR$ and $\ve>0$ there exists $N\ge 0$ such that the memory-loss property
\beq\label{eq:SA5'.2}
\left|\int g(\varphi(n+m,\omega)x)\,\rd\mu(x) - \int g(\varphi(n,\tau^m \omega)x)\,\rd\mu(x)\right| < \ve
\eeq
hold for $n\ge N$, $m\ge 0$ and all~$\omega$.\hfill$\blacksquare$

\medskip

The rest of the section is devoted to investigating some consequences of (SA5').

Note that~\eqref{eq:SA5'.2} asks that the integrals of $x\mapsto g((n,\tau^m\omega)x)$ with respect to the two measures~$\varphi(m,\omega)_*\mu$ and~$\mu$ are essentially the same for large~$n$, uniformly in~$m$ and~$\omega$. The role of~\eqref{eq:SA5'.1} is to allow for uniform approximations of the compositions $h\circ(\Phi^{(2)})^n$, $n\ge 0$, by compositions $\hat h\circ(\Phi^{(2)})^n$, where $h$ is measurable and $\hat h$ is ``simple'': observe that $(h-\hat h)\circ(\Phi^{(2)})^n$ is not guaranteed to be uniformly (in $n$) small in $L^1(\bar\bP\otimes\mu\otimes\mu)$, even if $h-\hat h$ is small, without some assumption. To that end, let us already prove a little lemma:

\begin{lem}\label{lem:uniform_approximation}
Let $h:\Omega\times X\times X\to \bR$ belong to $L^2(\bar\bP\otimes\mu\otimes\mu)$. Then
\beqn
\|h\circ(\Phi^{(2)})^n\|_{L^1(\bar\bP\otimes\mu\otimes\mu)} \le K^2\|h\|_{L^2(\bar\bP\otimes\mu\otimes\mu)}
\eeqn
holds for all $n\ge 0$ with $K$ as in~\eqref{eq:SA5'.1}.
\end{lem}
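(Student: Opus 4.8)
The plan is to unfold the iterate of the skew product, change variables in the two fiber coordinates using the density bound~\eqref{eq:SA5'.1}, and then exploit the $\tau$-invariance of $\bar\bP$ in the base. First I would write the left-hand side explicitly. Since $(\Phi^{(2)})^n(\omega,x,y) = (\tau^n\omega,\varphi(n,\omega)x,\varphi(n,\omega)y)$,
\beqn
\|h\circ(\Phi^{(2)})^n\|_{L^1(\bar\bP\otimes\mu\otimes\mu)} = \int_\Omega\left(\int_X\int_X |h(\tau^n\omega,\varphi(n,\omega)x,\varphi(n,\omega)y)|\,\rd\mu(x)\,\rd\mu(y)\right)\rd\bar\bP(\omega).
\eeqn
For each fixed $\omega$ I would push the inner double integral forward under $\varphi(n,\omega)$ in both fiber variables, writing $\rho = \rho_{n,\omega} = \rd\varphi(n,\omega)_*\mu/\rd\mu$. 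This recasts the inner integral as $\int_X\int_X |h(\tau^n\omega,u,v)|\,\rho(u)\rho(v)\,\rd\mu(u)\,\rd\mu(v)$.

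Next I would apply the Cauchy--Schwarz inequality on $(X\times X,\mu\otimes\mu)$, pairing $|h(\tau^n\omega,\slot,\slot)|$ against $\rho\otimes\rho$. The crucial point is that the $L^2(\mu\otimes\mu)$-norm of the product density factorizes, $\|\rho\otimes\rho\|_{L^2(\mu\otimes\mu)} = \|\rho\|_{L^2(\mu)}^2 \le K^2$ by~\eqref{eq:SA5'.1}. Hence, for each $\omega$, the inner integral is at most $K^2\,\|h(\tau^n\omega,\slot,\slot)\|_{L^2(\mu\otimes\mu)}$.

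Finally I would integrate this bound over $\omega$ and apply Cauchy--Schwarz (equivalently Jensen's inequality for the concave $\sqrt{\,\cdot\,}$) with respect to the probability measure $\bar\bP$, obtaining
\beqn
\|h\circ(\Phi^{(2)})^n\|_{L^1(\bar\bP\otimes\mu\otimes\mu)} \le K^2\left(\int_\Omega \|h(\tau^n\omega,\slot,\slot)\|_{L^2(\mu\otimes\mu)}^2\,\rd\bar\bP(\omega)\right)^{1/2}.
\eeqn
Because $\tau$ preserves $\bar\bP$, so does $\tau^n$, and the change of variables $\omega\mapsto\tau^n\omega$ leaves the integral under the square root invariant; that integral therefore equals $\|h\|_{L^2(\bar\bP\otimes\mu\otimes\mu)}^2$, which yields the claim.

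The main obstacle --- really the only nontrivial piece of bookkeeping --- is arranging the Cauchy--Schwarz so that the density enters \emph{squared}, one factor per fiber coordinate; this is exactly why the bound carries $K^2$ rather than $K$. The base variable is harmless precisely because $\varphi(n,\omega)$ acts only on the fibers while $\bar\bP$ is $\tau$-invariant, so no density appears in the $\omega$-direction. One should also remark in passing that $\rho\otimes\rho\in L^2(\mu\otimes\mu)$ together with $h\in L^2$ guarantees that all the products appearing are genuinely integrable, so every manipulation above is legitimate.
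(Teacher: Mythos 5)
Your proposal is correct and follows essentially the same route as the paper: rewrite the iterate via the fiberwise change of variables so the density $\rd\varphi(n,\omega)_*\mu/\rd\mu$ appears once per fiber coordinate, apply Cauchy--Schwarz so that the density enters squared (yielding $K^2$), and invoke the $\tau$-invariance of $\bar\bP$ to recover $\|h\|_{L^2(\bar\bP\otimes\mu\otimes\mu)}$. The only (cosmetic) difference is that you apply Cauchy--Schwarz twice --- fiberwise over $\mu\otimes\mu$ and then via Jensen over $\bar\bP$ --- whereas the paper applies H\"older once globally over the full product measure; the two bookkeepings are equivalent and give the same constant.
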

\begin{proof}
Write $\lambda = \bar\bP\otimes\mu\otimes\mu$ for brevity. Observe that
\beqn
\begin{split}
& \left(\int|h|\circ(\Phi^{(2)})^n\,\rd\lambda\right)^2
\\
=\ & \left(\int |h|(\tau^n\omega,x,y)\,\frac{\rd\varphi(n,\omega)_*\mu}{\rd\mu}(x)\,\frac{\rd\varphi(n,\omega)_*\mu}{\rd\mu}(y)\,\rd\lambda(\omega,x,y)\right)^2
\\
\le\ & \int |h|^2(\tau^n\omega,x,y)\,\rd\lambda(\omega,x,y) \int \left|\frac{\rd\varphi(n,\omega)_*\mu}{\rd\mu}(x)\,\frac{\rd\varphi(n,\omega)_*\mu}{\rd\mu}(y)\right|^2\,\rd\lambda(\omega,x,y)
\end{split}
\eeqn
by H\"older's inequality. Here
\beqn
\int|h|^2(\tau^n\omega,x,y)\,\rd\lambda(\omega,x,y) = \int|h|^2(\omega,x,y)\,\rd\lambda(\omega,x,y)
\eeqn
since $\bar\bP$ is stationary. On the other hand,
\beqn
\begin{split}
& \int \left|\frac{\rd\varphi(n,\omega)_*\mu}{\rd\mu}(x)\,\frac{\rd\varphi(n,\omega)_*\mu}{\rd\mu}(y)\right|^2\,\rd\lambda(\omega,x,y)
\\
=\ & \int \left[\int \left|\frac{\rd\varphi(n,\omega)_*\mu}{\rd\mu}(x)\right|^2\,\rd\mu(x)\,\int \left|\frac{\rd\varphi(n,\omega)_*\mu}{\rd\mu}(y)\right|^2 \,\rd\mu(y)\right]\rd\bar\bP(\omega)
\\
\le\ & K^4
\end{split}
\eeqn
by~\eqref{eq:SA5'.1}. Combining the estimates and taking square roots yields the result.
\end{proof}

\subsection{Standing Assumption (SA5') implies (SA5)} 

\begin{lem}\label{lem:skew_strongconvergence}
There exists an invariant measure~$\bfP^{(2)}$ for the RDS $\varphi^{(2)}$ such that
\beqn
\lim_{n\to\infty }\int h\circ(\Phi^{(2)})^n\,\rd(\bar\bP\otimes\mu\otimes\mu) = \int h\,\rd\bfP^{(2)}
\eeqn
for all bounded measurable $h:\Omega\times X\times X\to\bR$. Moreover, \eqref{eq:P^2_symmetry} holds, and~$\bfP$ in  \eqref{eq:marginals_P} is an invariant measure for the RDS $\varphi$ such that
\beqn
\lim_{n\to\infty }\int \tilde h\circ\Phi^n\,\rd(\bar\bP\otimes\mu) = \int \tilde h\,\rd\bfP
\eeqn
for all bounded measurable $\tilde h:\Omega\times X\to\bR$. 
\end{lem}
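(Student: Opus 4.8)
The plan is to realize $\bfP^{(2)}$ as the limit of the pushforwards $\lambda_n = ((\Phi^{(2)})^n)_*\lambda$, where $\lambda = \bar\bP\otimes\mu\otimes\mu$. Writing $I_n(h) = \int h\circ(\Phi^{(2)})^n\,\rd\lambda$, the heart of the matter is to prove that $\lim_{n\to\infty} I_n(h)$ exists for every bounded measurable $h$; the functional $h\mapsto \lim_n I_n(h)$ will then be shown to be integration against a probability measure $\bfP^{(2)}$, and every assertion of the lemma will follow by feeding suitable $h$ into the identity $\int h\,\rd\bfP^{(2)} = \lim_n I_n(h)$. I would carry this out in two stages: first establish convergence on product functions using the memory-loss property~\eqref{eq:SA5'.2}, then bootstrap to all bounded $h$ using the uniform $L^2$ bound supplied by Lemma~\ref{lem:uniform_approximation}.

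For the first stage, fix $h(\omega,x,y) = a(\omega)b(x)c(y)$ with $a,b,c$ bounded and measurable, and compare $I_{n+m}(h)$ with $I_n(h)$. Recalling $(\Phi^{(2)})^n(\omega,x,y) = (\tau^n\omega,\varphi(n,\omega)x,\varphi(n,\omega)y)$ and the cocycle identity $\varphi(n+m,\omega) = \varphi(n,\tau^m\omega)\circ\varphi(m,\omega)$, I would push the $x$- and $y$-integrations forward to write the inner integrals as $\int b(\varphi(n,\tau^m\omega)x')\,\rd(\varphi(m,\omega)_*\mu)(x')$ and its analogue for $c$. The memory-loss hypothesis~\eqref{eq:SA5'.2}, applied with $g=b$ and with $g=c$, then replaces each $\varphi(m,\omega)_*\mu$ by $\mu$ at the cost of an error smaller than any prescribed $\ve$ once $n$ is large, uniformly in $m$ and $\omega$. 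After both replacements the integrand depends on $\omega$ only through $\tau^m\omega$, so the $\tau$-invariance of $\bar\bP$ folds the expression back exactly onto $I_n(h)$. This yields $|I_{n+m}(h) - I_n(h)| \le \|a\|_\infty(\|b\|_\infty + \|c\|_\infty)\,\ve$ for all large $n$ and all $m$, so $(I_n(h))_n$ is Cauchy. By linearity the same holds for every $h$ in the algebra $\cA$ of finite linear combinations of indicators $\mathbf 1_{F\times A\times B}$ of measurable rectangles.

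For the second stage, Lemma~\ref{lem:uniform_approximation} gives $|I_n(h)| \le \|h\circ(\Phi^{(2)})^n\|_{L^1(\lambda)} \le K^2\|h\|_{L^2(\lambda)}$ uniformly in $n$. Since $\cA$ is dense in $L^2(\lambda)$ and $I_n$ converges on $\cA$, a standard three-epsilon argument based on this uniform bound shows that $I_n(h)$ converges for every $h\in L^2(\lambda)$, in particular for every bounded measurable $h$. The limit $\tilde L(h) = \lim_n I_n(h)$ is linear, positive (because $I_n(h)\ge 0$ whenever $h\ge 0$), satisfies $\tilde L(1)=1$, and obeys $|\tilde L(h)|\le K^2\|h\|_{L^2(\lambda)}$; hence by the Riesz representation theorem on the Hilbert space $L^2(\lambda)$ there is a density $\rho\in L^2(\lambda)$, necessarily $\rho\ge 0$ with $\int\rho\,\rd\lambda = 1$, such that $\tilde L(h) = \int h\rho\,\rd\lambda$. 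I set $\bfP^{(2)} = \rho\,\lambda$; then $\int h\,\rd\bfP^{(2)} = \lim_n I_n(h)$ for all bounded measurable $h$, which is the displayed convergence.

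All remaining claims are now formal consequences of this identity. Invariance $(\Phi^{(2)})_*\bfP^{(2)} = \bfP^{(2)}$ follows from $I_n(h\circ\Phi^{(2)}) = I_{n+1}(h)$; the marginal identity $(\Pi_1)_*\bfP^{(2)} = \bar\bP$ follows from $I_n(a\circ\Pi_1) = \int a\,\rd\bar\bP$ for every $n$ (again by $\tau$-invariance); and the symmetry~\eqref{eq:P^2_symmetry} follows because interchanging the two copies of $\mu$ leaves each $I_n$ unchanged. The symmetry in turn gives the equality of the two marginals in~\eqref{eq:marginals_P}, defining $\bfP$; that $\bfP$ is $\Phi$-invariant with marginal $\bar\bP$ follows from the intertwining $\Phi\circ(\Pi_1\times\Pi_2) = (\Pi_1\times\Pi_2)\circ\Phi^{(2)}$ together with the invariance of $\bfP^{(2)}$. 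Finally, the convergence statement for $\varphi$ is obtained by applying the convergence just proved to $h(\omega,x,y) = \tilde h(\omega,x)$, for which $I_n(h) = \int\tilde h\circ\Phi^n\,\rd(\bar\bP\otimes\mu)$ and $\int h\,\rd\bfP^{(2)} = \int\tilde h\,\rd\bfP$. I expect the main obstacle to be the passage from the product algebra $\cA$ to all bounded measurable $h$ while obtaining an honest, countably additive limit measure; the device that overcomes it is precisely the uniform $L^2(\lambda)$-bound of Lemma~\ref{lem:uniform_approximation}, which lets one represent the limit functional by an $L^2$-density through Riesz and thereby sidestep the delicate uniform-countable-additivity issues a direct setwise-convergence approach would raise.
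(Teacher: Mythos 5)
Your proof is correct, and its first stage---the Cauchy property of $I_n(h)=\int h\circ(\Phi^{(2)})^n\,\rd(\bar\bP\otimes\mu\otimes\mu)$ for product functions, obtained from the cocycle identity, the memory-loss bound~\eqref{eq:SA5'.2} and the $\tau$-invariance of $\bar\bP$---is exactly the paper's first stage. Where you genuinely diverge is the extension-and-representation step. The paper extends convergence from indicators of measurable cubes to all bounded measurable $h$ via the \emph{monotone class theorem} (using Lemma~\ref{lem:uniform_approximation} to control $(h-h_{k_0})\circ(\Phi^{(2)})^n$ uniformly in $n$), and then invokes the \emph{Vitali--Hahn--Saks theorem} to conclude that the setwise limit of the pushforwards is a countably additive probability measure. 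You instead combine the $L^2(\lambda)$-density of the rectangle algebra with the same uniform bound $|I_n(h)|\le K^2\|h\|_{L^2(\lambda)}$ in a three-epsilon argument, and then represent the limit functional by \emph{Riesz representation in the Hilbert space} $L^2(\lambda)$. Both arguments lean on Lemma~\ref{lem:uniform_approximation} at the same critical point; note that this lemma is also what makes $I_n$ well defined on $L^2$-equivalence classes (it forces $((\Phi^{(2)})^n)_*\lambda\ll\lambda$), a half-sentence worth adding to your write-up. What your route buys: countable additivity is automatic, the monotone-class bookkeeping and VHS are avoided, and you get the strictly stronger structural conclusion $\bfP^{(2)}=\rho\,\lambda$ with $\rho\in L^2(\lambda)$, $\rho\ge 0$, $\int\rho\,\rd\lambda=1$, i.e.\ explicit absolute continuity of the invariant measure with respect to $\bar\bP\otimes\mu\otimes\mu$, which the paper never states. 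What the paper's route buys: it operates directly with bounded measurable functions and setwise limits, without exploiting Hilbert-space structure, and the same monotone-class/VHS template is recycled later in the proof of Lemma~\ref{lem:limit_from_past}. The concluding formal deductions (invariance of $\bfP^{(2)}$, the marginal identity, the symmetry~\eqref{eq:P^2_symmetry}, and the claims for $\Phi$ and $\bfP$---which you derive via the intertwining $\Phi\circ(\Pi_1\times\Pi_2)=(\Pi_1\times\Pi_2)\circ\Phi^{(2)}$, while the paper simply specializes $h(\omega,x,y)=\tilde h(\omega,x)$) are equivalent in the two treatments.
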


\begin{proof}
Let $u:\Omega\to\bR$ and $g^1,g^2:X\to\bR$ be bounded measurable. Let $\ve>0$. Then there exists $N\ge 0$ such that
\beqn
\begin{split}
& \iiint (u\otimes g^1\otimes g^2)\circ (\Phi^{(2)})^{n+m}(\omega,x,y)\, \rd\mu(x)\, \rd\mu(y)\,\rd\bar\bP(\omega) 
\\
=\ & \int u(\tau^{n+m}\omega) \int g^1(\varphi(n+m,\omega)x)\, \rd\mu(x) \int g^2(\varphi(n+m,\omega)y)\, \rd\mu(y)\,\rd\bar\bP(\omega)
\\
=\ & \int u(\tau^{n+m}\omega) \int g^1(\varphi(n,\tau^m\omega)x)\, \rd\mu(x) \int g^2(\varphi(n,\tau^m\omega)y)\, \rd\mu(y)\,\rd\bar\bP(\omega) + O(\ve)
\\
=\ & \int u(\tau^{n}\omega) \int g^1(\varphi(n,\omega)x)\, \rd\mu(x) \int g^2(\varphi(n,\omega)y)\, \rd\mu(y)\,\rd\bar\bP(\omega) + O(\ve)
\\
=\ & \iiint (u\otimes g^1\otimes g^2)\circ (\Phi^{(2)})^n(\omega,x,y)\, \rd\mu(x)\, \rd\mu(y)\,\rd\bar\bP(\omega)  + O(\ve)
\end{split}
\eeqn
for all $n\ge N$ and $m\ge 0$. Here the third line uses~\eqref{eq:SA5'.2} and the fourth line uses stationarity. Thus, we see that the sequence $(\iiint (u\otimes g^1\otimes g^2)\circ (\Phi^{(2)})^{n}(\omega,x,y)\, \rd\mu(x)\, \rd\mu(y)\,\rd\bar\bP(\omega) )_n$ is Cauchy and therefore convergent. We will show using the monotone class theorem that the convergence property extends to an arbitrary bounded measurable function in place of $u\otimes g^1\otimes g^2$.

Let $\cH$ denote the set of all measurable functions $h:\Omega\times X\times X\to\bR$ such that $\lim_{n\to\infty }\int h\circ(\Phi^{(2)})^n\,\rd(\bar\bP\otimes\mu\otimes\mu)$ exists. Let $\cA$ denote the set of all measurable cubes in $\Omega\times X\times X$. Clearly~$\cA$ is nonempty and closed under finite intersections, and it contains the product space $\Omega\times X\times X$. Clearly~$\cH$ is closed under linear combinations. Furthermore, the argument above shows~$1_A\in\cH$ for all~$A\in\cA$. Suppose now that~$h_k\in\cH$ are nonnegative functions increasing to a bounded function~$h$. Showing~$h\in\cH$ proves that~$\cH$ contains all bounded functions that are measurable with respect to the sigma-algebra~$\sigma(\cA) = \cF\otimes\cB\otimes\cB$. We will show $h\in\cH$ next.

Let $\ve>0$ be fixed. Since $0\le h_k\uparrow h$ where $h$ is bounded, by the bounded convergence theorem there exists $k_0 =  k_0(\ve)$ such that
$
\|h-h_{k_0}\|_{L^2(\bar\bP\otimes\mu\otimes\mu)} < \ve.
$ 
Thus, by Lemma~\ref{lem:uniform_approximation},
\beqn
\|(h-h_{k_0})\circ(\Phi^{(2)})^n\|_{L^1(\bar\bP\otimes\mu\otimes\mu)} < K^2\ve
\eeqn
for all $n\ge 1$. Since $h_{k_0}\in\cH$, there exists $n_0 = n_0(\ve)$ such that
\beqn
\left|\int h_{k_0}\circ(\Phi^{(2)})^n\,\rd(\bar\bP\otimes\mu\otimes\mu) - \lim_{m\to\infty}\int h_{k_0} \circ(\Phi^{(2)})^m\,\rd(\bar\bP\otimes\mu\otimes\mu)\right| < \ve
\eeqn
for all $n\ge n_0$. A combination of the estimates yields
\beqn
\left|\int h\circ(\Phi^{(2)})^n\,\rd(\bar\bP\otimes\mu\otimes\mu) - \lim_{m\to\infty}\int h_{k_0} \circ(\Phi^{(2)})^m\,\rd(\bar\bP\otimes\mu\otimes\mu)\right| < K^2\ve + \ve
\eeqn
for all $n\ge n_0$.
Hence $h\in\cH$. Therefore, by the monotone class theorem $\cH$ contains all bounded measurable functions.

By the Vitali--Hahn--Saks theorem there exists a probability measure~$\bfP^{(2)}$ satisfying
\beqn
\lim_{n\to\infty }\int h\circ(\Phi^{(2)})^n\,\rd(\bar\bP\otimes\mu\otimes\mu) = \int h\,\rd\bfP^{(2)}
\eeqn
for all bounded measurable $h:\Omega\times X\times X\to\bR$. The symmetry property~\eqref{eq:P^2_symmetry} of~$\bfP^{(2)}$ is an immediate consequence.
By the same token~$\bfP^{(2)}$ is invariant for~$\Phi^{(2)}$:
\beqn
\int h\circ \Phi^{(2)}\,\rd\bfP^{(2)} = \lim_{n\to\infty }\int h\circ\Phi^{(2)}\circ(\Phi^{(2)})^n\,\rd(\bar\bP\otimes\mu\otimes\mu) = \int h\,\rd\bfP^{(2)}.
\eeqn
Furthermore, taking $h$ of the form $h(\omega,x,y) = u(\omega)$,
\beqn
\int h\,\rd\bfP^{(2)} = \lim_{n\to\infty} \int u(\tau^n\omega)\, \rd\bar\bP(\omega) = \int u\, \rd\bar\bP
\eeqn
shows $(\Pi_1)_*\bfP^{(2)} = \bar\bP$. Thus, $\bfP^{(2)}$ is an invariant measure for the RDS~$\varphi^{(2)}$. 

Suppose that either $h(\omega,x,y) = \tilde h(\omega,x)$ or $h(\omega,x,y) = \tilde h(\omega,y)$ holds identically. Then
\beqn
\begin{split}
\int\tilde h\,\rd\bfP = \int h\,\rd\bfP^{(2)} & = \lim_{n\to\infty }\int h\circ(\Phi^{(2)})^n\,\rd(\bar\bP\otimes\mu\otimes\mu) 
 = \lim_{n\to\infty }\int \tilde h\circ\Phi^n\,\rd(\bar\bP\otimes\mu) .
\end{split}
\eeqn
This yields the claims concerning~$\bfP$.
\end{proof}

We are in position to prove the promised fact:
\begin{lem}\label{lem:skewlimits}
Standing Assumption (SA5') implies (SA5).
\end{lem}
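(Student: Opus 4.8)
The plan is to notice that almost all of (SA5) has already been delivered by Lemma~\ref{lem:skew_strongconvergence}: that lemma produces an invariant measure~$\bfP^{(2)}$ for the RDS~$\varphi^{(2)}$, establishes the symmetry~\eqref{eq:P^2_symmetry}, shows that the common marginal~$\bfP$ in~\eqref{eq:marginals_P} is an invariant measure for~$\varphi$, and — crucially — furnishes the strong-convergence statements $\lim_{n\to\infty}\int h\circ(\Phi^{(2)})^n\,\rd(\bar\bP\otimes\mu\otimes\mu) = \int h\,\rd\bfP^{(2)}$ and $\lim_{n\to\infty}\int \tilde h\circ\Phi^n\,\rd(\bar\bP\otimes\mu) = \int \tilde h\,\rd\bfP$ for all bounded measurable test functions. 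Hence the only thing left to verify in (SA5) is the triple of limit identities~\eqref{eq:SA5_lim1}, \eqref{eq:SA5_lim2}, and~\eqref{eq:SA5_lim3}, and each of these is an immediate instance of the strong convergence once the left-hand side is rewritten as a skew-product average.

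The key mechanical step I would carry out is to express each fiberwise average $\bar\bE[\,\cdot\,]$ as an integral of a fixed bounded observable composed with an iterate of the skew product. Recalling $\Phi^i(\omega,x) = (\tau^i\omega,\varphi(i,\omega)x)$ together with the cocycle identity $\varphi(i+k,\omega) = \varphi(k,\tau^i\omega)\circ\varphi(i,\omega)$, I would introduce, for each fixed~$k$, the bounded measurable functions
\beqn
\tilde h_0(\omega,x) = f(x),\qquad
\tilde h_k(\omega,x) = f(x)\,f(\varphi(k,\omega,x)),\qquad
h_k(\omega,x,y) = f(x)\,f(\varphi(k,\omega,y)).
\eeqn
A direct computation using the cocycle property gives $\tilde h_0\circ\Phi^i = f_i$, $\tilde h_k\circ\Phi^i = f_i\,f_{i+k}$, and $h_k\circ(\Phi^{(2)})^i(\omega,x,y) = f_i(x)\,f_{i+k}(y)$ pointwise. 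Integrating out the fibers then yields exactly
\beqn
\int \tilde h_0\circ\Phi^i\,\rd(\bar\bP\otimes\mu) = \bar\bE[\mu(f_i)],\quad
\int \tilde h_k\circ\Phi^i\,\rd(\bar\bP\otimes\mu) = \bar\bE[\mu(f_i f_{i+k})],
\eeqn
and $\int h_k\circ(\Phi^{(2)})^i\,\rd(\bar\bP\otimes\mu\otimes\mu) = \bar\bE[\mu(f_i)\mu(f_{i+k})]$.

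Applying the two convergence conclusions of Lemma~\ref{lem:skew_strongconvergence} to $\tilde h_0$, $\tilde h_k$, and $h_k$ respectively then reads off the right-hand sides $\int f\,\rd\bfP$, $\int f(x)f(\varphi(k,\omega,x))\,\rd\bfP$, and $\int f(x)f(\varphi(k,\omega,y))\,\rd\bfP^{(2)}$, which are precisely~\eqref{eq:SA5_lim1}--\eqref{eq:SA5_lim3}. Since (SA5') is in force, Lemma~\ref{lem:skew_strongconvergence} applies, and collecting its output with these three identities establishes every clause of (SA5). I do not anticipate a genuine obstacle here: the content is essentially bookkeeping. The only point requiring a little care is the correct bracketing of the cocycle composition — making sure that $f(\varphi(k,\tau^i\omega)(\varphi(i,\omega)\,\cdot\,)) = f_{i+k}$ — and confirming that each $\tilde h_k$, $h_k$ is bounded (clear, as $f$ is bounded by (SA1)) and jointly measurable in its arguments (clear from the measurability of~$\varphi$ assumed in Section~\ref{sec:the problem}), so that the test functions are admissible in Lemma~\ref{lem:skew_strongconvergence}.
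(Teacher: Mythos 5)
Your proposal is correct and matches the paper's own proof essentially step for step: both reduce the claim to Lemma~\ref{lem:skew_strongconvergence} and then verify~\eqref{eq:SA5_lim1}--\eqref{eq:SA5_lim3} by rewriting each fiberwise average $\bar\bE[\,\cdot\,]$ as the integral of a fixed bounded test function composed with $\Phi^i$ or $(\Phi^{(2)})^i$, via the cocycle identity. Your functions $\tilde h_0$, $\tilde h_k$, $h_k$ are exactly the paper's $f\circ\Pi_2$, $f\circ\Pi_2\ f\circ\Pi_2\circ\Phi^k$, and $f\circ\Pi_2\ f\circ\Pi_3\circ(\Phi^{(2)})^k$, written out explicitly.
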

\begin{proof}
By Lemma~\ref{lem:skew_strongconvergence} it remains to verify~\eqref{eq:SA5_lim1}--\eqref{eq:SA5_lim3}. Using Lemma~\ref{lem:skew_strongconvergence},
\beqn
\begin{split}
\lim_{i\to\infty}\bar\bE[\mu(f_i)] 
& = \lim_{i\to\infty} \int f\circ\Pi_{2}\circ\Phi^{i}\,\rd(\bar\bP\otimes\mu) 
 = \int f\circ\Pi_{2}\,\rd\bfP
\end{split}
\eeqn
and
\beqn
\begin{split}
\lim_{i\to\infty}\bar\bE[\mu(f_i f_{i+k})] 
& = \lim_{i\to\infty} \int (f\circ\Pi_{2}\ f\circ\Pi_{2}\circ\Phi^k)\circ\Phi^{i}\,\rd(\bar\bP\otimes\mu) 
= \int f\circ\Pi_{2}\ f\circ\Pi_{2}\circ\Phi^k\,\rd\bfP.
\end{split}
\eeqn
Likewise
\beqn
\begin{split}
\lim_{i\to\infty}\bar\bE[\mu(f_i) \mu(f_{i+k})]
&=  \lim_{i\to\infty} \int (f\circ\Pi_{2}\ f\circ\Pi_{3}\circ (\Phi^{(2)})^{k})\circ (\Phi^{(2)})^{i}\,\rd(\bar\bP\otimes \mu \otimes\mu)
\\
&=   \int f\circ\Pi_{2}\ f\circ\Pi_{3}\circ (\Phi^{(2)})^{k}\,\rd\bfP^{(2)}.
\end{split}
\eeqn
The proof is complete.
\end{proof}


\subsection{Disintegration of the invariant measure~$\bfP^{(2)}$}

In this subsection we shed some light on the invariant measure~$\bfP^{(2)}$ of the RDS~$\varphi^{(2)}$ with the aid of disintegrations. The mathematical constructions here are well known, and we include this part for completeness. The results call for nice structure of the measurable spaces: we assume that both~$(X,\cB)$ and $(\Omega_0,\cE)$ are {\bf standard measurable spaces}.

We begin by stating a basic fact:

\begin{lem}\label{lem:disintegration}
There exists a family of set functions $\nu^{(2)}_\omega:\cB\to[0,1]$,  $\omega\in\Omega$, such that
\begin{enumerate}[(i)]
\item the map $\omega\mapsto\nu^{(2)}_\omega(B)$ is measurable for all $B\in\cB\otimes\cB$;
\item $\nu^{(2)}_\omega$ is a probability measure for $\bar\bP$-a.e.\@ $\omega\in\Omega$;
\item for all $h\in L^1(\bfP^{(2)})$,
\beqn
\int h\,\rd\bfP^{(2)} = \int_\Omega\int_{X\times X} h(\omega,x,y)\,\rd\nu^{(2)}_{\omega}(x,y)\,\rd\bar\bP(\omega).
\eeqn
\end{enumerate}
The disintegration is essentially unique: if $\tilde\nu^{(2)}_\omega$, $\omega\in\Omega$, is another family of such set functions, then $\nu^{(2)}_\omega = \tilde\nu^{(2)}_\omega$ for $\bar\bP$-a.e.\@ $\omega\in\Omega$.
\end{lem}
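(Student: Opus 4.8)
The plan is to recognize the claim as an instance of the classical \emph{disintegration theorem} (existence and essential uniqueness of regular conditional probabilities) for probability measures on standard measurable spaces, applied to $\bfP^{(2)}$ together with the projection $\Pi_1:\Omega\times X\times X\to\Omega$.

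First I would verify that all the relevant spaces are standard. Since $(\Omega_0,\cE)$ is standard and a countable product of standard measurable spaces is again standard, the space $(\Omega,\cF) = (\Omega_0^{\bZ_+},\cE^{\bZ_+})$ is standard. Likewise $(X\times X,\cB\otimes\cB)$, being a finite product of standard spaces, is standard, and therefore so is the full product $(\Omega\times X\times X,\cF\otimes\cB\otimes\cB)$. Recall from (SA5) that $(\Pi_1)_*\bfP^{(2)} = \bar\bP$, so the marginal of $\bfP^{(2)}$ under $\Pi_1$ is exactly $\bar\bP$. With these hypotheses in place, the disintegration theorem applies verbatim: it produces a family of set functions $\nu^{(2)}_\omega$ on $\cB\otimes\cB$, indexed by $\omega\in\Omega$, such that $\omega\mapsto\nu^{(2)}_\omega(B)$ is measurable for every $B\in\cB\otimes\cB$, such that $\nu^{(2)}_\omega$ is a probability measure for $\bar\bP$-a.e.\ $\omega$, and such that the identity in~(iii) holds for all $h\in L^1(\bfP^{(2)})$. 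The same theorem delivers the asserted essential uniqueness: any two such families agree for $\bar\bP$-a.e.\ $\omega$.

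Should a self-contained argument be preferred, I would construct the family by hand. Fix a countable algebra $\cA_0$ generating $\cB\otimes\cB$; for each $A\in\cA_0$ set $\nu_\omega(A) = \bE_{\bfP^{(2)}}[\mathbf 1_A\mid \Pi_1^{-1}\cF](\omega)$, a version of the conditional expectation, defined $\bar\bP$-a.e.\ and measurable in $\omega$ by construction. On a single $\bar\bP$-full set one simultaneously checks finite additivity and the relevant monotone-continuity relations for all $A\in\cA_0$, which is possible because only countably many constraints are involved. Uniqueness then follows because any two disintegrations must assign the same conditional expectation to each $A\in\cA_0$, hence agree on $\cB\otimes\cB$ for $\bar\bP$-a.e.\ $\omega$ by a monotone class argument.

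The main — and essentially only — obstacle is upgrading the finitely additive set function $A\mapsto\nu_\omega(A)$ on $\cA_0$ to a genuine countably additive probability measure on $\cB\otimes\cB$ for $\bar\bP$-a.e.\ $\omega$; this is precisely the step where standardness is indispensable, being carried out via the Borel isomorphism of $X\times X$ with a Borel subset of a compact metric space, which furnishes the required regularity. Everything else is routine bookkeeping with conditional expectations, so in the write-up I would simply invoke the disintegration theorem and record that its hypotheses are met.
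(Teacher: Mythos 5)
Your proposal is correct and follows essentially the same route as the paper, which likewise observes that $(X\times X,\cB\otimes\cB)$ is standard and that $(\Pi_1)_*\bfP^{(2)} = \bar\bP$, and then invokes the classical disintegration theorem (citing Proposition~1.4.3 of Arnold). The extra details you supply --- checking standardness of the product spaces and sketching the hands-on construction via conditional expectations on a countable generating algebra --- are exactly the content hidden inside that classical result, so nothing is missing or different in substance.
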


\begin{proof}
Since the product space $(X\times X, \cB\otimes\cB)$ is also a standard measurable space and~$(\Pi_1)_*\bfP = \bar\bP$, classical results yield the lemma; see, e.g., Proposition~1.4.3 of Arnold~\cite{Arnold}. 
\end{proof}

It is helpful to think of $\nu^{(2)}_\omega$ as the conditional measure $\bfP^{(2)}(\slot|\,\omega)$. In the following we will characterize the conditional measures~$\nu^{(2)}_{\omega}$.

Next, we extend $\bar\bP$ to a stationary measure on the space of two-sided sequences. To that end define
$\Omega^- = \Omega_0^{\{\dots,-2,-1,0\}}$ and $\Omega^+ = \Omega_0^{\{1,2,3,\dots\}} = \Omega$. The sigma-algebras $\cF^-$ and $\cF^+=\cF$ denote the corresponding products of $\cE$. Write also
\beqn
\bar\Omega = \Omega^-\times\Omega^+ = \Omega_0^\bZ
\quad\text{and}\quad
\bar\cF = \cF^{-}\otimes\cF^{+} = \cE^\bZ.
\eeqn
Let $\bar\tau:\bar\Omega\to\bar\Omega$ denote the two-sided shift: $(\bar\tau^k\bar\omega)_i = \bar\omega_{i+k}$ for all $i,k\in\bZ$. Finally, let $\Pi^\pm:\bar\Omega\to\Omega^\pm$ denote the canonical projections: $\Pi^-(\bar\omega) = \omega^-$ and $\Pi^+(\bar\omega) = \omega^+$ for all $\bar\omega = (\omega^-,\omega^+)\in\Omega^-\times\Omega^+$. 

We are ready to state another basic fact:

\begin{lem}\label{lem:natural_extension}
(1) There exists a unique probability measure $\bar\bQ$ on $(\bar\Omega,\bar\cF)$ which is invariant for $\bar\tau$ and satisfies $(\Pi^+)_*\bar\bQ = \bar\bP$.

\noindent (2) There exists an essentially unique family of set functions $q_\omega:\cF^-\to[0,1]$,  $\omega\in\Omega$, such that
\begin{enumerate}[(i)]
\item the map $\omega\mapsto q_\omega(E)$ is measurable for all $E\in\cF^-$;
\item $q_\omega$ is a probability measure for $\bar\bP$-a.e.\@ $\omega\in\Omega$;
\item for all $h\in L^1(\bar\bQ)$,
\beqn
\int_{\bar\Omega} h(\bar\omega)\,\rd\bar\bQ(\bar\omega) = \int_\Omega\int_{\Omega^-} h(\omega^-,\omega)\,\rd q_{\omega}(\omega^-)\,\rd\bar\bP(\omega).
\eeqn
\end{enumerate}
\end{lem}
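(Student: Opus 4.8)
The plan is to recognize $\bar\bQ$ as the measure underlying the natural extension of the one-sided system $(\Omega^+,\cF^+,\bar\bP,\tau)$, to construct it through its finite-dimensional distributions, and then to obtain the family $q_\omega$ of part~(2) by disintegrating $\bar\bQ$ along $\Pi^+$ in exactly the same way as in Lemma~\ref{lem:disintegration}.

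First I would prove part~(1). For each finite set $F=\{i_1<\dots<i_p\}\subset\bZ$ I define a probability measure $\mu_F$ on $(\Omega_0^F,\cE^{F})$ as the $\bar\bP$-law of $(\omega_{i_1+k},\dots,\omega_{i_p+k})$, where $k=k(F)$ is any integer with $i_1+k\ge 1$. The $\tau$-invariance of $\bar\bP$, which says precisely that shifting all (positive) indices by one leaves any joint distribution unchanged, shows that $\mu_F$ is independent of the admissible choice of $k$; and the Kolmogorov consistency condition (that $\mu_{F'}$ restricted to the coordinates in $F\subset F'$ equals $\mu_F$) follows by choosing a single $k$ admissible for both $F$ and $F'$ and using that the finite-dimensional laws of $\bar\bP$ are automatically consistent. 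Because $(\Omega_0,\cE)$ is standard, the Kolmogorov extension theorem then yields a unique probability measure $\bar\bQ$ on $(\bar\Omega,\bar\cF)=(\Omega_0^\bZ,\cE^\bZ)$ whose finite-dimensional marginals are the $\mu_F$. By construction the finite marginals of $\bar\tau_*\bar\bQ$ agree with those of $\bar\bQ$, so $\bar\tau_*\bar\bQ=\bar\bQ$; and taking $k=0$ for $F\subset\bZ_+$ gives $(\Pi^+)_*\bar\bQ=\bar\bP$. Uniqueness is immediate: any $\bar\tau$-invariant $\bar\bQ'$ with $(\Pi^+)_*\bar\bQ'=\bar\bP$ must assign $\mu_F$ to the coordinates in $F$, since the condition on $\Pi^+$ fixes the marginals on positive indices and $\bar\tau$-invariance propagates them to all of $\bZ$; and finite-dimensional marginals determine a measure on a countable product of standard spaces.

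For part~(2) I would simply disintegrate $\bar\bQ$ over the projection $\Pi^+:\bar\Omega\to\Omega^+=\Omega$. Since $\Omega^-=\Omega_0^{\{\dots,-1,0\}}$ is a countable product of standard spaces, it is itself standard, and $(\Pi^+)_*\bar\bQ=\bar\bP$ by part~(1); hence the same classical disintegration result invoked for Lemma~\ref{lem:disintegration} (Proposition~1.4.3 of Arnold~\cite{Arnold}) produces an essentially unique family $q_\omega:\cF^-\to[0,1]$ satisfying (i)--(iii), where (iii) is exactly the identity $\int h\,\rd\bar\bQ=\int_\Omega\int_{\Omega^-}h(\omega^-,\omega)\,\rd q_\omega(\omega^-)\,\rd\bar\bP(\omega)$ after identifying the fibre of $\Pi^+$ over $\omega$ with $\Omega^-$.

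The only genuinely delicate point is the construction in part~(1): checking that the prescribed finite-dimensional distributions are simultaneously well defined (independent of the shift $k$) and Kolmogorov-consistent, since this is precisely where the $\tau$-invariance of $\bar\bP$ enters, and confirming that the standardness hypothesis on $(\Omega_0,\cE)$ legitimizes the Kolmogorov extension theorem. Everything else reduces to a direct appeal to the already-cited disintegration machinery.
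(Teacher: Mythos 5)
Your proof is correct and follows essentially the same route as the paper: part (1) is the standard natural-extension construction via Kolmogorov's extension theorem (the paper cites Arnold, Appendix A.3, for exactly the details you spell out --- translation-invariant finite-dimensional distributions coinciding with those of $\bar\bP$), and part (2) is the same disintegration argument over $\Pi^+$ invoked in Lemma~\ref{lem:disintegration}. The only difference is that you make the well-definedness, consistency, and uniqueness checks explicit where the paper defers them to the cited reference.
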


\begin{proof}
(1) Since $(\Omega_0,\cE)$ is a standard measurable space, the shift-invariant measure~$\bar\bQ$ having~$\bar\bP$ as its marginal is uniquely constructed with the aid of Kolmogorov's extension theorem by requiring that the finite dimensional distributions are translation invariant and coincide with those of $\bar\bP$.
See, e.g., Appendix~A.3 of Arnold~\cite{Arnold} for details.

(2) Since~$(\Omega^-,\cF^-)$ is a standard probability space and $(\Pi^+)_*\bar\bQ = \bar\bP$, the result is classical as in Lemma~\ref{lem:disintegration}.
\end{proof}
The resulting dynamical system $(\bar\Omega,\bar\cF,\bar\bQ,\bar\tau)$ is the natural extension of $(\Omega,\cF,\bar\bP,\tau)$ with homomorphism~$\Pi^+$. The intuition behind the measures in Lemma~\ref{lem:natural_extension} is the following: Think of $\omega = (\omega_1,\omega_2,\dots)$ as a stochastic process with law~$\bar\bP$. Due to stationarity, it is possible to glue a history $\omega^- = (\dots,\omega_{-1},\omega_0)$ to~$\omega$ in a consistent and unique way such that the law~$\bar\bQ$ of $\bar\omega = (\omega^-,\omega) = (\dots,\omega_{-1},\omega_0,\omega_1,\omega_2,\dots)$ is stationary and the marginal law corresponding to the future part~$\omega$ is~$\bar\bP$. The measure $q_\omega$ can be thought of as the conditional law $\bar\bQ(\slot|\,\omega)$, the distribution of the past~$\omega^{-}$ given the future~$\omega$. 

For the following it will be convenient to introduce the notations
\beqn
\varphi(\omega^-_{-n+1},\dots,\omega^-_0) = T_{\omega^-_0}\circ\dots\circ T_{\omega^-_{-n+1}}
\eeqn
and
\beqn
\varphi^{(2)}(\omega^-_{-n+1},\dots,\omega^-_0)(x,y) = (\varphi(\omega^-_{-n+1},\dots,\omega^-_0)x,\,\varphi(\omega^-_{-n+1},\dots,\omega^-_0)y)
\eeqn
for any finite sequence $(\omega^-_{-n+1},\dots,\omega^-_0)\subset\Omega_0$.

Now, for all bounded measurable functions $h(\omega,x,y) = u(\omega)g(x,y)$ we have
\beq\label{eq:fiber1}
\int h\,\rd\bfP^{(2)} = \int_\Omega u(\omega)\int_{X\times X} g\,\rd\nu^{(2)}_{\omega}\,\rd\bar\bP(\omega).
\eeq
On the other hand,
Lemma~\ref{lem:skew_strongconvergence} yields
\beqn
\begin{split}
\int h\,\rd\bfP^{(2)}
& = \lim_{n\to\infty} \int_\Omega u(\tau^n\omega) \int_{X\times X} g\circ\varphi^{(2)}(n,\omega)\,\rd(\mu\otimes\mu)\,\rd\bar\bP(\omega)
\\
& = \lim_{n\to\infty} \int_{\bar\Omega} u(\Pi^+(\bar\tau^n\bar\omega)) \int_{X\times X} g\circ\varphi^{(2)}(n,\Pi^+(\bar\omega))\,\rd(\mu\otimes\mu)\,\rd\bar\bQ(\bar\omega)
\\
& = \lim_{n\to\infty} \int_{\bar\Omega} u(\Pi^+(\bar\omega)) \int_{X\times X} g\circ\varphi^{(2)}(n,\Pi^+(\bar\tau^{-n}\bar\omega))\,\rd(\mu\otimes\mu)\,\rd\bar\bQ(\bar\omega).
\end{split}
\eeqn
In order to disintegrate $\bar\bQ$, let us write $\bar\omega = (\omega^-,\omega)$ in the obvious manner, noting that $u(\Pi^+(\bar\omega)) = u(\omega)$ and $\varphi^{(2)}(n,\Pi^+(\bar\tau^{-n}\bar\omega)) = \varphi^{(2)}(\omega^-_{-n+1},\dots,\omega^-_0)$. Thus, Lemma~\ref{lem:natural_extension} yields
\beq\label{eq:fiber2}
\begin{split}
\int h\,\rd\bfP^{(2)}
& = \lim_{n\to\infty} \int_{\Omega} u(\omega) \int_{\Omega^{-}} \int_{X\times X} g\circ\varphi^{(2)}(\omega^-_{-n+1},\dots,\omega^-_0)\,\rd(\mu\otimes\mu)\,\rd q_\omega(\omega^-)\,\rd\bar\bP(\omega).
\end{split}
\eeq
The following observation is now key:
\begin{lem}\label{lem:limit_from_past}
Given $\omega^-\in\Omega^-$, there exists a probability measure~$\mu_{\omega^{-}}$ on~$(X,\cB)$ such that
\beq\label{eq:limit_from_past}
\lim_{n\to\infty} \int_{X\times X} g\circ\varphi^{(2)}(\omega^-_{-n+1},\dots,\omega^-_0)\,\rd(\mu\otimes\mu) = \int_{X\times X} g\,\rd(\mu_{\omega^-}\otimes\mu_{\omega^-})
\eeq
for all bounded measurable~$g:X\times X\to\bR$. 
\end{lem}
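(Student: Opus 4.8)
The plan is to realize each finite composition as a genuine cocycle evaluation and then push the memory-loss property~\eqref{eq:SA5'.2} through it. Fix $\omega^-\in\Omega^-$ and abbreviate $\Psi_n = \varphi(\omega^-_{-n+1},\dots,\omega^-_0) = T_{\omega^-_0}\circ\dots\circ T_{\omega^-_{-n+1}}$ together with $\lambda_n = (\Psi_n)_*\mu$. The first observation is that $\Psi_n = \varphi(n,\omega)$ for the one-sided sequence $\omega\in\Omega$ defined by $\omega_i = \omega^-_{-n+i}$ for $1\le i\le n$ (continued arbitrarily thereafter); hence~\eqref{eq:SA5'.1} forces $\|\rd\lambda_n/\rd\mu\|_{L^2(\mu)}\le K$ for every $n$.

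First I would establish convergence of $\int g'\,\rd\lambda_n$ for each bounded measurable $g':X\to\bR$. Given $\ve>0$, apply~\eqref{eq:SA5'.2} to the function $g'$ to obtain $N$; then for $n\ge N$ and $m\ge 0$ choose $\omega_i = \omega^-_{-(n+m)+i}$ for $1\le i\le n+m$. A direct computation shows $\varphi(n+m,\omega) = \Psi_{n+m}$ and $\varphi(n,\tau^m\omega) = \Psi_n$, so~\eqref{eq:SA5'.2} becomes $|\int g'\,\rd\lambda_{n+m} - \int g'\,\rd\lambda_n|<\ve$. Thus $(\int g'\,\rd\lambda_n)_n$ is Cauchy and converges. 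Since the $N$ in~\eqref{eq:SA5'.2} is uniform in $\omega$, this holds for the fixed $\omega^-$ regardless of the arbitrary tail of $\omega$.

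Next I would identify the limit. Writing $\rho_n = \rd\lambda_n/\rd\mu$, the bound $\|\rho_n\|_{L^2(\mu)}\le K$ and reflexivity of $L^2(\mu)$ yield a weakly convergent subsequence $\rho_{n_k}\rightharpoonup\rho$. The Cauchy property of the previous step determines the functional $g'\mapsto\lim_n\int g'\rho_n\,\rd\mu$ on all bounded measurable $g'$, hence on all of $L^2(\mu)$ (bounded functions are dense and $\|\rho_n\|_{L^2}$ is uniformly bounded), so every subsequential weak limit coincides with $\rho$ and the whole sequence satisfies $\rho_n\rightharpoonup\rho$ in $L^2(\mu)$. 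Testing against indicators shows $\rho\ge 0$ $\mu$-a.e.\ and $\int\rho\,\rd\mu = \lim_n\lambda_n(X) = 1$, so $\rd\mu_{\omega^-} = \rho\,\rd\mu$ defines a probability measure with $L^2(\mu)$-density, and $\int g'\,\rd\lambda_n\to\int g'\,\rd\mu_{\omega^-}$ for all bounded measurable $g'$. (One could instead invoke Vitali--Hahn--Saks as in Lemma~\ref{lem:skew_strongconvergence}, but the $L^2$ route is needed anyway for the next step.)

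Finally I would upgrade to the product measures in~\eqref{eq:limit_from_past}. Since $\|\rho_n\otimes\rho_n\|_{L^2(\mu\otimes\mu)} = \|\rho_n\|_{L^2(\mu)}^2\le K^2$, the tensor sequence is bounded in $L^2(\mu\otimes\mu)$. For product test functions $g(x,y) = g_1(x)g_2(y)$ one has $\int g\,\rd(\lambda_n\otimes\lambda_n) = (\int g_1\,\rd\lambda_n)(\int g_2\,\rd\lambda_n)\to(\int g_1\,\rd\mu_{\omega^-})(\int g_2\,\rd\mu_{\omega^-})$, which is the integral of $g$ against $\rho\otimes\rho$. Because the linear span of such product functions is dense in $L^2(\mu\otimes\mu)$ and $\rho_n\otimes\rho_n$ is norm-bounded there, the same density argument gives $\rho_n\otimes\rho_n\rightharpoonup\rho\otimes\rho$ weakly in $L^2(\mu\otimes\mu)$, whence $\int g\,\rd(\lambda_n\otimes\lambda_n)\to\int g\,\rd(\mu_{\omega^-}\otimes\mu_{\omega^-})$ for every bounded measurable $g$, which is~\eqref{eq:limit_from_past}. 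I expect this last passage to be the main obstacle: setwise convergence of $\lambda_n$ does not by itself transfer to the products tested against arbitrary bounded measurable $g$, and it is precisely the uniform $L^2$-density bound~\eqref{eq:SA5'.1} that supplies the weak $L^2$-compactness on $X\times X$ needed to close the argument.
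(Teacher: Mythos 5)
Your proof is correct, and its skeleton coincides with the paper's: both arguments first make the identification of the finite composition $\varphi(\omega^-_{-n+1},\dots,\omega^-_0)$ with a cocycle evaluation so that~\eqref{eq:SA5'.2} gives the Cauchy property of $\bigl(\int g'\,\rd\lambda_n\bigr)_n$ for each bounded measurable $g'$, then pass to product test functions $g^1(x)g^2(y)$, and finally extend to arbitrary bounded measurable $g$ on $X\times X$ using the uniform density bound~\eqref{eq:SA5'.1}. Where you genuinely differ is in the tools used at the two remaining junctures. The paper identifies the limit measure $\mu_{\omega^-}$ via the Vitali--Hahn--Saks theorem and performs the extension step by the monotone class theorem, reusing (a fiberwise analogue of) the $L^1$--$L^2$ estimate of Lemma~\ref{lem:uniform_approximation} exactly as in the proof of Lemma~\ref{lem:skew_strongconvergence}. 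You instead extract a weak $L^2(\mu)$ limit $\rho$ of the densities $\rho_n = \rd\lambda_n/\rd\mu$, obtaining $\rd\mu_{\omega^-} = \rho\,\rd\mu$ directly, and you close the extension by density of the linear span of product functions in $L^2(\mu\otimes\mu)$ combined with the uniform bound $\|\rho_n\otimes\rho_n\|_{L^2(\mu\otimes\mu)}\le K^2$. Your route buys strictly more than the statement asks: it exhibits $\mu_{\omega^-}\ll\mu$ with density of $L^2(\mu)$-norm at most $K$, and it dispenses with both Vitali--Hahn--Saks and the monotone class argument in favor of elementary Hilbert-space facts; the paper's route is softer, never needs to discuss densities of the limit measure, and keeps this proof structurally parallel to Lemma~\ref{lem:skew_strongconvergence} so that the same machinery is cited twice. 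You also correctly diagnose the crux: in both proofs, \eqref{eq:SA5'.1} is what allows the passage from product test functions to general bounded measurable $g$, since setwise convergence of $\lambda_n$ alone would not control $\lambda_n\otimes\lambda_n$ against such $g$ uniformly.
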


Note that~$\mu_{\omega^-}$ has the interpretation of being the pushforward of~$\mu$ from the infinitely distant past along the history $\omega^- = (\dots,\omega^-_1,\omega^-_0)$.

\begin{proof}[Proof of Lemma~\ref{lem:limit_from_past}]
Consider first a bounded measurable $g^1:X\to\bR$. Let $\ve>0$. By~\eqref{eq:SA5'.2} of~(SA5') there exists~$N\ge 0$ such that 
\beq\label{eq: g_epsilon 1}
\left| \int g^1\circ \varphi(\omega^-_{-n-m+1},\dots,\omega^-_0)\,\rd \mu - \int g^1\circ \varphi(\omega^-_{-n+1},\dots,\omega^-_0)\,\rd \mu \right|< \ve
\eeq
for all $n\ge N$ and $m\ge 0$.
We see that $(\int g^1\circ \varphi(\omega^-_{-n+1},\dots,\omega^-_0)\,\rd \mu)_{n=1}^{\infty}$ is a Cauchy sequence, and thus convergences. Since $g^1$ was arbitrary, the Vitali--Hahn--Saks theorem yields the existence of a measure $\mu_{\omega^-}$ such that
\beqn
\lim_{n\to \infty}\int g^1\circ \varphi(\omega^-_{-n+1},\dots,\omega^-_0)\,\rd \mu = \int g^1 \,\rd\mu_{\omega^{-}}.
\eeqn
This yields~\eqref{eq:limit_from_past}
for all $g(x,y) = g^1(x)g^2(y)$ with both $g^1,g^2:X\to\bR$ bounded and measurable. Similarly to the proof of Lemma~\ref{lem:skew_strongconvergence}, a straightforward application of the monotone class theorem extends~\eqref{eq:limit_from_past} to all bounded measurable $g:X\times X\to\bR$.
\end{proof}

We finally arrive at the characterization of the conditional measure $\nu^{(2)}_\omega$ as the expected pushforward of $\mu\otimes\mu$ from the infinitely distant past along all histories consistent with~$\omega$:
\begin{cor}\label{cor:nu_expression}
For $\bar\bP$-a.e.\@ $\omega\in\Omega$,
\beqn
\nu^{(2)}_{\omega}(\slot) = \int_{\Omega^{-}} (\mu_{\omega^-}\otimes\mu_{\omega^-})(\slot)\,\rd q_\omega(\omega^-).
\eeqn
\end{cor}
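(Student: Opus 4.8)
The plan is to recognize the right-hand side as a disintegration of $\bfP^{(2)}$ over $\bar\bP$ and then appeal to the essential uniqueness of disintegrations asserted in Lemma~\ref{lem:disintegration}. Concretely, I would define the candidate family of set functions
\[
\tilde\nu^{(2)}_\omega(B) = \int_{\Omega^-}(\mu_{\omega^-}\otimes\mu_{\omega^-})(B)\,\rd q_\omega(\omega^-), \qquad B\in\cB\otimes\cB,
\]
verify in turn the three defining properties of a disintegration for this family, and conclude that $\tilde\nu^{(2)}_\omega = \nu^{(2)}_\omega$ for $\bar\bP$-a.e.\ $\omega$, which is exactly the claim.

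First I would check measurability and the probability-measure property. For each bounded measurable $g^1:X\to\bR$ the map $\omega^-\mapsto\int g^1\circ\varphi(\omega^-_{-n+1},\dots,\omega^-_0)\,\rd\mu$ is $\cF^-$-measurable, since it depends on finitely many coordinates of $\omega^-$ through compositions of measurable maps; hence its pointwise limit $\omega^-\mapsto\int g^1\,\rd\mu_{\omega^-}$ from Lemma~\ref{lem:limit_from_past} is $\cF^-$-measurable as well. A monotone class argument upgrades this to measurability of $\omega^-\mapsto(\mu_{\omega^-}\otimes\mu_{\omega^-})(B)$ for every $B\in\cB\otimes\cB$, and combining with the measurability of $\omega\mapsto q_\omega(E)$ from Lemma~\ref{lem:natural_extension} gives measurability of $\omega\mapsto\tilde\nu^{(2)}_\omega(B)$. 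Since $\mu_{\omega^-}\otimes\mu_{\omega^-}$ is a probability measure and $q_\omega$ is a probability measure for $\bar\bP$-a.e.\ $\omega$, the mixture $\tilde\nu^{(2)}_\omega$ is a probability measure for $\bar\bP$-a.e.\ $\omega$.

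The crux is the disintegration identity, and here the computation already carried out in~\eqref{eq:fiber2} does the work. For product functions $h(\omega,x,y)=u(\omega)g(x,y)$ the inner integrals $\int_{X\times X}g\circ\varphi^{(2)}(\omega^-_{-n+1},\dots,\omega^-_0)\,\rd(\mu\otimes\mu)$ are bounded by $\|g\|_\infty$ uniformly in $n$ and $\omega^-$, so dominated convergence (with respect to the probability measure $\bar\bQ$, disintegrated as $q_\omega$ over $\bar\bP$) lets me pass the limit in~\eqref{eq:fiber2} through both integrals; invoking the pointwise limit of Lemma~\ref{lem:limit_from_past} then yields
\[
\int h\,\rd\bfP^{(2)} = \int_\Omega u(\omega)\int_{\Omega^-}\int_{X\times X} g\,\rd(\mu_{\omega^-}\otimes\mu_{\omega^-})\,\rd q_\omega(\omega^-)\,\rd\bar\bP(\omega) = \int_\Omega\int_{X\times X} h\,\rd\tilde\nu^{(2)}_\omega\,\rd\bar\bP.
\]
Both sides are linear in $h$ and stable under bounded monotone limits, and the product functions generate $\cF\otimes\cB\otimes\cB$, so the monotone class theorem (exactly as in the proof of Lemma~\ref{lem:skew_strongconvergence}) extends the identity to all bounded measurable $h$, and then to all $h\in L^1(\bfP^{(2)})$ by approximation. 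Thus $\tilde\nu^{(2)}_\omega$ is a disintegration of $\bfP^{(2)}$ over $\bar\bP$, and essential uniqueness in Lemma~\ref{lem:disintegration} finishes the proof.

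I expect the main obstacle to be the bookkeeping around $g$-dependent null sets: proving the scalar identity $\int g\,\rd\nu^{(2)}_\omega = \int_{\Omega^-}\int g\,\rd(\mu_{\omega^-}\otimes\mu_{\omega^-})\,\rd q_\omega$ for each fixed $g$ leaves an exceptional null set depending on $g$, and one cannot naively quantify over all (uncountably many) $g$ at once. Routing the argument through the abstract essential-uniqueness statement for disintegrations sidesteps this difficulty entirely, which is why I set the proof up to verify the three disintegration axioms rather than to compare the two measures test function by test function.
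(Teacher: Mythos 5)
Your proposal is correct and follows essentially the same route as the paper: the paper likewise equates the expression~\eqref{eq:fiber1} with the limit in~\eqref{eq:fiber2}, passes the limit inside via Lemma~\ref{lem:limit_from_past}, and concludes by the essential uniqueness of the conditional measures from Lemma~\ref{lem:disintegration}. The only difference is that you spell out details the paper leaves implicit (the dominated convergence step, measurability of $\omega\mapsto\tilde\nu^{(2)}_\omega(B)$, the monotone class extension, and the $g$-dependent null-set issue), which is careful but not a different argument.
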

\begin{proof}
Equating first the expressions of $\int h\,\rd\bfP^{(2)}$ in~\eqref{eq:fiber1} and~\eqref{eq:fiber2}, and then applying~\eqref{eq:limit_from_past} to the latter, we obtain
\beqn
\int_\Omega u(\omega)\int_{X\times X} g\,\rd\nu^{(2)}_{\omega}\,\rd\bar\bP(\omega) 
= \int_{\Omega} u(\omega) \int_{\Omega^{-}} \int_{X\times X} g\,\rd(\mu_{\omega^-}\otimes\mu_{\omega^-})\,\rd q_\omega(\omega^-)\,\rd\bar\bP(\omega).
\eeqn
Since the conditional measures $\nu^{(2)}_\omega$ are unique, the claim follows.
\end{proof}

Let us lastly point out that the invariance of $\bfP^{(2)}$ is equivalent to
\beqn
\bar\bE[\varphi^{(2)}(m,\slot)_*\nu^{(2)}_{(\slot)}\,|\,\tau^{-m}\cF\,](\omega) = \nu^{(2)}_{\tau^m\omega}
\eeqn
holding for almost all $\omega$ with respect to $\bar\bP$, for all $m\ge1$. The equation means that
\beqn
\int_\Omega \varphi^{(2)}(m,\omega)_*\nu^{(2)}_{\omega}(g)\, u(\tau^m\omega) \,\rd\bar\bP(\omega) = \int_\Omega \nu^{(2)}_{\tau^m\omega}(g)\, u(\tau^m\omega) \,\rd\bar\bP(\omega) 
\eeqn
holds for all bounded measurable functions~$g:X\times X\to\bR$ and $u:\Omega\to\bR$. It is a good exercise for the interested reader to reprove the invariance of $\bfP^{(2)}$ by verifying the equation above directly, using Corollary~\ref{cor:nu_expression}, Lemma~\ref{lem:limit_from_past} and Lemma~\ref{lem:natural_extension}.


\newpage
\bigskip
\bigskip
\bibliography{Quenched_NA}{}
\bibliographystyle{plainurl}
\newpage

\vspace*{\fill}

\end{document}